\newtheorem{thm}{Theorem}[section]
\newtheorem{prop}[thm]{Proposition}
\newtheorem{lem}[thm]{Lemma}
\theoremstyle{definition}
\newtheorem{dfn}[thm]{Definition}
\theoremstyle{remark}
\newtheorem{rmk}[thm]{Remark}
\newtheorem*{ntn}{Notation}
\numberwithin{equation}{section}
\newcommand{\cent}{\mathop{\mathrm{cent}}\nolimits}
\newcommand{\curv}{\mathop{\mathrm{curv}}\nolimits}
\newcommand{\HM}{\mathscr{H}_{\Gamma}}
\newcommand{\id}[1]{\mathrm{id}_{#1}}
\newcommand{\im}{\mathop{\mathrm{Im}}\nolimits}
\newcommand{\ind}[1]{\mathbf{1}_{#1}}
\newcommand{\interior}{\mathop{\mathrm{int}}\nolimits}
\newcommand{\inv}{\mathop{\mathrm{Inv}}\nolimits}
\newcommand{\lip}{\mathop{\mathbf{Lip}}\nolimits}
\newcommand{\mob}{\mathop{\mbox{M\"{o}b}}\nolimits}
\newcommand{\rad}{\mathop{\mathrm{rad}}\nolimits}
\newcommand{\re}{\mathop{\mathrm{Re}}\nolimits}
\newcommand{\supp}{\mathop{\mathrm{supp}}\nolimits}
\newcommand{\thirdangle}{q}
\newcommand{\TDT}{\mathsf{TDT}^{+}}
\newcommand{\TDTo}{\mathsf{TDT}^{\oplus}}
\newcommand{\vol}{\mathop{\mathrm{vol}}\nolimits}
\title[The Laplacian on some self-conformal fractals and Weyl's asymptotics]{The Laplacian on some self-conformal fractals\\and Weyl's asymptotics for its eigenvalues:\\A survey of the analytic aspects}
\author[Naotaka Kajino]{Naotaka Kajino}\thanks{This work was supported by JSPS KAKENHI Grant Numbers JP25887038, JP15K17554, JP18K18720 and by the Research Institute for Mathematical Sciences, an International Joint Usage/Research Center located in Kyoto University.}
\address{Department of Mathematics, Graduate School of Science, Kobe University\\
Rokkodai-cho 1-1, Nada-ku, Kobe 657-8501, Japan}
\curraddr{Research Institute for Mathematical Sciences, Kyoto University\\
Kitashirakawa-Oiwake-cho, Sakyo-ku, Kyoto 606-8502, Japan}
\email{nkajino@kurims.kyoto-u.ac.jp}
\subjclass[2010]{Primary 28A80, 35P20, 53C23; Secondary 31C25, 37B10, 60J35}
\keywords{Apollonian gasket, Kleinian groups, round Sierpi\'{n}ski carpets, Dirichlet forms, Laplacian, Weyl's eigenvalue asymptotics}
\begin{document}

\begin{abstract}
This article surveys the analytic aspects of the author's recent studies on the
construction and analysis of a \emph{``geometrically canonical'' Laplacian} on circle
packing fractals invariant with respect to certain Kleinian groups
(i.e., discrete groups of M\"{o}bius transformations on the Riemann sphere
$\widehat{\mathbb{C}}=\mathbb{C}\cup\{\infty\}$), including the classical
\emph{Apollonian gasket} and some \emph{round Sierpi\'{n}ski carpets}.
The main result on Weyl's asymptotics for its eigenvalues is of the same form as
that by Oh and Shah [\emph{Invent.\ Math.}\ \textbf{187} (2012), 1--35, Theorem 1.4]
on the asymptotic distribution of the circles in a very large class of such fractals.
\end{abstract}

\maketitle
\section{Introduction}\label{sec:intro}
This article, which is a considerable expansion of \cite{K:WeylRSCsurv}, concerns the author's recent studies in
\cite{K:MFO2016,K:WeylAG,K:WeylDCGMaskit,K:WeylRSC} on Weyl's eigenvalue asymptotics
for a ``geometrically canonical'' Laplacian defined by the author on circle packing
fractals which are invariant with respect to certain Kleinian groups (i.e., discrete
groups of M\"{o}bius transformations on $\widehat{\mathbb{C}}:=\mathbb{C}\cup\{\infty\}$),
including the classical \emph{Apollonian gasket} (Figure \ref{fig:AGs}) and
some \emph{round Sierpi\'{n}ski carpets} (Figure \ref{fig:RSCs}).
Here we focus on sketching the construction of the Laplacian, the proof of its
uniqueness and basic properties, and the analytic aspects of the proof of the eigenvalue
asymptotics; the reader is referred to \cite{K:WeylSurvErgodicTh} for a survey
of the ergodic-theoretic aspects of the proof of the eigenvalue asymptotics.

This article is organized as follows. First in \S\ref{sec:AG-geometry} we introduce
the Apollonian gasket $K(\mathscr{D})$ and recall its basic geometric properties.
In \S\ref{sec:AG-DF}, after a brief summary of how the Laplacian on $K(\mathscr{D})$
was discovered by Teplyaev in \cite{Tep:energySG}, we give its definition and sketch
the proof of the result in \cite{K:WeylAG} that it is the infinitesimal generator of
the \emph{unique} strongly local, regular symmetric Dirichlet form over $K(\mathscr{D})$
with respect to which the inclusion map $K(\mathscr{D})\hookrightarrow\mathbb{C}$ is
\emph{harmonic} on the complement of the three outmost vertices. In \S\ref{sec:Weyl-AG},
we state the principal result in \cite{K:WeylAG} that the Laplacian on $K(\mathscr{D})$
satisfies Weyl's eigenvalue asymptotics of the same form as the asymptotic distribution of
the circles in $K(\mathscr{D})$ by Oh and Shah in \cite[Corollary 1.8]{OhShah:InventMath2012},
and sketch the proof of certain estimates on the eigenvalues required to conclude Weyl's
asymptotics by applying the ergodic-theoretic result explained in \cite{K:WeylSurvErgodicTh}.
Finally, in \S\ref{sec:WeylRSC} we present a partial extension of these results to the case of
round Sierpi\'{n}ski carpets which are invariant with respect to certain concrete Kleinian groups.
\begin{ntn}
We use the following notation throughout this article.
\begin{itemize}[label=\textup{(0)},align=left,leftmargin=*,topsep=0pt,parsep=0pt,itemsep=0pt]
\item[\textup{(0)}]The symbols $\subset$ and $\supset$ for set inclusion
	\emph{allow} the case of the equality.
\item[\textup{(1)}]$\mathbb{N}:=\{n\in\mathbb{Z}\mid n>0\}$, i.e., $0\not\in\mathbb{N}$.
\item[\textup{(2)}]$\widehat{\mathbb{C}}:=\mathbb{C}\cup\{\infty\}$ denotes the Riemann sphere.
\item[\textup{(3)}]$i:=\sqrt{-1}$ denotes the imaginary unit. The real and imaginary
	parts of $z\in\mathbb{C}$ are denoted by $\re z$ and $\im z$, respectively.
\item[\textup{(4)}]The cardinality (number of elements) of a set $A$ is denoted by $\#A$.
\item[\textup{(5)}]Let $E$ be a non-empty set. We define $\id{E}:E\to E$ by $\id{E}(x):=x$.
	For $x\in E$, we define $\ind{x}=\ind{x}^{E}\in\mathbb{R}^{E}$ by
	$\ind{x}(y):=\ind{x}^{E}(y):=\bigl\{\begin{smallmatrix}1 & \textrm{if $y=x$,}\\ 0 & \textrm{if $y\not=x$.}\end{smallmatrix}$
	For $u:E\to[-\infty,+\infty]$ we set $\|u\|_{\sup}:=\|u\|_{\sup,E}:=\sup_{x\in E}|u(x)|$.
\item[\textup{(6)}]Let $E$ be a topological space. The Borel $\sigma$-field of $E$ is
	denoted by $\mathscr{B}(E)$. For $A\subset E$, its interior, closure and boundary
	in $E$ are denoted by $\interior_{E}A$, $\overline{A}^{E}$ and $\partial_{E}A$,
	respectively, and when $E=\mathbb{C}$ they are simply denoted by $\interior A$,
	$\overline{A}$ and $\partial A$, respectively. We set
	$\mathcal{C}(E):=\{u\mid\textrm{$u:E\to\mathbb{R}$, $u$ is continuous}\}$,
	$\supp_{E}[u]:=\overline{u^{-1}(\mathbb{R}\setminus\{0\})}^{E}$ for $u\in\mathcal{C}(E)$, and
	$\mathcal{C}_{\mathrm{c}}(E):=\{u\in\mathcal{C}(E)\mid\textrm{$\supp_{E}[u]$ is compact}\}$.
\item[\textup{(7)}]Let $n\in\mathbb{N}$. The Lebesgue measure on
	$(\mathbb{R}^{n},\mathscr{B}(\mathbb{R}^{n}))$ is denoted by $\vol_{n}$.
	The Euclidean inner product and norm on $\mathbb{R}^{n}$ are denoted by
	$\langle\cdot,\cdot\rangle$ and $|\cdot|$, respectively.
	For $A\subset\mathbb{R}^{n}$ and $f:A\to\mathbb{C}$ we set 
	$\lip_{A}f:=\sup_{x,y\in A,\,x\not=y}\frac{|f(x)-f(y)|}{|x-y|}$
	($\sup\emptyset:=0$). For a non-empty open subset $U$ of $\mathbb{R}^{n}$ and
	$u:U\to\mathbb{R}$ with $\lip_{U}u<+\infty$, the first-order partial derivatives
	of $u$, which exist $\vol_{n}$-a.e.\ on $U$, are denoted by
	$\partial_{1}u,\ldots,\partial_{n}u$, and we set
	$\nabla u:=(\partial_{1}u,\ldots,\partial_{n}u)$.
\end{itemize}
\end{ntn}
\section{The Apollonian gasket and its fractal geometry}\label{sec:AG-geometry}
In this section, we introduce the Apollonian gasket and state its geometric properties
needed for our purpose. The same framework is presented also in \cite[Section 2]{K:WeylSurvErgodicTh},
but we repeat it here for the reader's convenience. The following definition and proposition form
the basis of the construction and further detailed studies of the Apollonian gasket.
\begin{dfn}[tangential disk triple]\label{dfn:tangential-disk-triple}
\begin{itemize}[label=\textup{(0)},align=left,leftmargin=*,topsep=0pt,parsep=0pt,itemsep=0pt]
\item[\textup{(0)}]We set $S:=\{1,2,3\}$.
\item[\textup{(1)}]Let $D_{1},D_{2},D_{3}\subset\mathbb{C}$ be either three open disks
	or two open disks and an open half-plane. The triple $\mathscr{D}:=(D_{1},D_{2},D_{3})$
	of such sets is called a \emph{tangential disk triple}
	if and only if $\#(\overline{D_{j}}\cap\overline{D_{k}})=1$
	(i.e., $D_{j}$ and $D_{k}$ are \emph{externally} tangent) for any $j,k\in S$
	with $j\not=k$.
	If $\mathscr{D}$ is such a triple consisting of three disks, then the open triangle in $\mathbb{C}$
	with vertices the centers of $D_{1},D_{2},D_{3}$ is denoted by $\triangle(\mathscr{D})$.
\item[\textup{(2)}]Let $\mathscr{D}=(D_{1},D_{2},D_{3})$ be a tangential disk triple.
	The open subset $\mathbb{C}\setminus\bigcup_{j\in S}\overline{D_{j}}$ of
	$\mathbb{C}$ is then easily seen to have a unique bounded connected component,
	which is denoted by $T(\mathscr{D})$ and called the \emph{ideal triangle}
	associated with $\mathscr{D}$. We also set
	$\{q_{j}(\mathscr{D})\}:=\overline{D_{k}}\cap\overline{D_{l}}$
	for each $(j,k,l)\in\{(1,2,3),(2,3,1),(3,1,2)\}$ and
	$V_{0}(\mathscr{D}):=\{q_{j}(\mathscr{D})\mid j\in S\}$.
\item[\textup{(3)}]A tangential disk triple $\mathscr{D}=(D_{1},D_{2},D_{3})$ is
	called \emph{positively oriented} if and only if its associated ideal triangle
	$T(\mathscr{D})$ is to the left of $\partial T(\mathscr{D})$
	when $\partial T(\mathscr{D})$ is oriented so as to have
	$\{q_{j}(\mathscr{D})\}_{j=1}^{3}$ in this order.
\end{itemize}
\noindent
Finally, we define
\begin{align*}
\TDT&:=\{\mathscr{D}\mid\textrm{$\mathscr{D}$ is a positively oriented tangential disk triple}\},\\
\TDTo&:=\{\mathscr{D}\mid\textrm{$\mathscr{D}=(D_{1},D_{2},D_{3})\in\TDT$, $D_{1},D_{2},D_{3}$ are disks}\}.
\end{align*}
\end{dfn}
The following proposition is classical and can be shown
by some elementary (though lengthy) Euclidean-geometric arguments.
We set $\rad(D):=r$ and $\curv(D):=r^{-1}$ for each open disk $D\subset\mathbb{C}$
of radius $r\in(0,+\infty)$ and $\curv(D):=0$ for each open half-plane $D\subset\mathbb{C}$.
\begin{prop}\label{prop:circumscribed-inscribed}
Let $\mathscr{D}=(D_{1},D_{2},D_{3})\in\TDT$, set
$(\alpha,\beta,\gamma):=\bigl(\curv(D_{1}),\curv(D_{2}),\curv(D_{3})\bigr)$
and set $\kappa:=\kappa(\mathscr{D}):=\sqrt{\beta\gamma+\gamma\alpha+\alpha\beta}$.%
\begin{itemize}[label=\textup{(1)},align=left,leftmargin=*,topsep=0pt,parsep=0pt,itemsep=0pt]
\item[\textup{(1)}]Let $D_{\mathrm{cir}}(\mathscr{D})\subset\mathbb{C}$ denote
	the \emph{circumscribed disk} of $T(\mathscr{D})$, i.e., the unique open disk with
	$\{q_{1}(\mathscr{D}),q_{2}(\mathscr{D}),q_{3}(\mathscr{D})\}
		\subset\partial D_{\mathrm{cir}}(\mathscr{D})$.
	Then
	$\overline{T(\mathscr{D})}\setminus\{q_{1}(\mathscr{D}),q_{2}(\mathscr{D}),q_{3}(\mathscr{D})\}
		\subset D_{\mathrm{cir}}(\mathscr{D})$,
	$\partial D_{\mathrm{cir}}(\mathscr{D})$ is orthogonal to $\partial D_{j}$
	for any $j\in S$, and $\curv(D_{\mathrm{cir}}(\mathscr{D}))=\kappa$.
\item[\textup{(2)}]There exists a unique \emph{inscribed disk}
	$D_{\mathrm{in}}(\mathscr{D})$ of $T(\mathscr{D})$,
	i.e., a unique open disk $D_{\mathrm{in}}(\mathscr{D})\subset\mathbb{C}$
	such that $D_{\mathrm{in}}(\mathscr{D})\subset T(\mathscr{D})$ and
	$\#(\overline{D_{\mathrm{in}}(\mathscr{D})}\cap\overline{D_{j}})=1$
	for any $j\in S$. Moreover,
	$\curv(D_{\mathrm{in}}(\mathscr{D}))=\alpha+\beta+\gamma+2\kappa$.
\end{itemize}
\end{prop}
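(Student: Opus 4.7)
The plan is to prove both parts by combining M\"{o}bius normalization with a direct Euclidean computation, exploiting that M\"{o}bius transformations preserve generalized circles (circles and lines), tangency, and orthogonality, even though they do not preserve curvatures.

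For part (1), first I would observe that $q_{1}(\mathscr{D}), q_{2}(\mathscr{D}), q_{3}(\mathscr{D})$ are three distinct non-collinear points of $\mathbb{C}$: the centers of $D_{1}, D_{2}, D_{3}$ (or, if one $D_{j}$ is a half-plane, the two centers and the foot of the perpendicular to the line $\partial D_{j}$) are non-collinear by a short argument from mutual external tangency, and Menelaus' theorem applied to the three tangent points as cevian-type points on the sides of the triangle of centers then rules out collinearity. This gives the existence and uniqueness of $D_{\mathrm{cir}}(\mathscr{D})$. To verify the orthogonality of $\partial D_{\mathrm{cir}}(\mathscr{D})$ with each $\partial D_{j}$ and the containment $\overline{T(\mathscr{D})} \setminus V_{0}(\mathscr{D}) \subset D_{\mathrm{cir}}(\mathscr{D})$, I would apply the inversion $\iota$ centered at $q_{3}(\mathscr{D})$. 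Under $\iota$, the circles $\partial D_{1}, \partial D_{2}$ become two parallel lines (they pass through the center of inversion and were mutually tangent there), $\partial D_{3}$ becomes a circle $C'$ tangent to both lines, and $\partial D_{\mathrm{cir}}(\mathscr{D})$ becomes a straight line $L$ through $\iota(q_{1}(\mathscr{D}))$ and $\iota(q_{2}(\mathscr{D}))$. But these two image tangent points lie on the diameter of $C'$ perpendicular to the parallel lines, so $L$ is that diameter extended and is evidently orthogonal to all three image boundaries; M\"{o}bius invariance of orthogonality yields orthogonality in the original configuration, and the containment claim is read off from the same normalized picture.

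For the curvature formula $\curv(D_{\mathrm{cir}}(\mathscr{D})) = \kappa$ I would switch to a direct Euclidean calculation. Writing $c, r$ for the center and radius of $D_{\mathrm{cir}}(\mathscr{D})$ and $c_{j}, r_{j}$ for those of $D_{j}$, the orthogonality relations give $|c - c_{j}|^{2} = r^{2} + r_{j}^{2}$ for each $j$, while external tangency gives $|c_{j} - c_{k}| = r_{j} + r_{k}$ for $j \neq k$. Placing $c_{1}$ at the origin and $c_{2}$ on the positive real axis, one can solve for $c_{3}$ in closed form and then eliminate $c$ to obtain $r^{-2} = \alpha\beta + \beta\gamma + \gamma\alpha$. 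Part (2) is then comparatively painless: the same M\"{o}bius normalization reduces the existence and uniqueness of $D_{\mathrm{in}}(\mathscr{D})$ to the visually evident fact that there is a unique disk inscribed in the relevant region between two parallel lines and a tangent circle, while the curvature formula $\curv(D_{\mathrm{in}}(\mathscr{D})) = \alpha + \beta + \gamma + 2\kappa$ is a direct application of Descartes' circle theorem to the four mutually externally tangent disks $D_{1}, D_{2}, D_{3}, D_{\mathrm{in}}(\mathscr{D})$---the $+$ sign in $\alpha + \beta + \gamma \pm 2\kappa$ being selected because the inscribed disk is the smaller of the two Soddy disks for the configuration.

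I expect the main technical obstacle to be the closed-form Euclidean derivation of $\curv(D_{\mathrm{cir}}(\mathscr{D})) = \kappa$: while entirely elementary, it requires some patient algebra with nested square roots, together with a separate (simpler) treatment of the degenerate case in which one of $D_{1}, D_{2}, D_{3}$ is a half-plane, which can either be obtained by a limiting argument or by inverting at an appropriate tangent point to bring the configuration into genuine-disk form.
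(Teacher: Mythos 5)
Your proposal is correct in outline, but note that the paper itself offers \emph{no} proof of this proposition: it is stated as classical, ``provable by some elementary (though lengthy) Euclidean-geometric arguments,'' which is exactly the genre of argument you supply, so there is nothing in the paper to diverge from. Two remarks on your sketch. First, the step you single out as the main technical obstacle --- the closed-form derivation of $\curv(D_{\mathrm{cir}}(\mathscr{D}))=\kappa$ from the orthogonality relations $|c-c_{j}|^{2}=r^{2}+r_{j}^{2}$ --- can be bypassed entirely in the all-disk case: since $|c_{j}-c_{k}|=r_{j}+r_{k}$, the tangency point $q_{l}(\mathscr{D})$ on the segment $c_{j}c_{k}$ lies at distance $r_{j}=s-|c_{k}-c_{l}|$ from $c_{j}$ (where $s=r_{1}+r_{2}+r_{3}$ is the semiperimeter of the triangle of centers), i.e., the three tangency points are precisely the points where the incircle of $\triangle(\mathscr{D})$ touches its sides. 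Hence $D_{\mathrm{cir}}(\mathscr{D})$ \emph{is} that incircle; orthogonality to each $\partial D_{j}$ is immediate (the inradius at a touch point is perpendicular to the side, which is the line of centers, i.e., the direction of the radius of $D_{j}$ there), and Heron's formula gives $\rad(D_{\mathrm{cir}}(\mathscr{D}))=\sqrt{r_{1}r_{2}r_{3}/(r_{1}+r_{2}+r_{3})}$, whose reciprocal squared is $\beta\gamma+\gamma\alpha+\alpha\beta$ with no nested radicals. The half-plane case ($\gamma=0$) then follows by your inversion or limiting argument. Second, in part (2) you should say a word on why the disk produced by the strip-normalization is \emph{externally} tangent to all three $D_{j}$ (so that Descartes' theorem applies with all four curvatures taken positive) before selecting the $+$ sign; this is clear from the normalized picture but is the one point where the sign selection could otherwise be questioned. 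With these caveats your argument is complete and is, if anything, a reasonable reconstruction of the omitted classical proof.
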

The following notation is standard in studying self-similar sets.
\begin{dfn}\label{dfn:word-shift}
\begin{itemize}[label=\textup{(1)},align=left,leftmargin=*,topsep=0pt,parsep=0pt,itemsep=0pt]
\item[\textup{(1)}]We set $W_{0}:=\{\emptyset\}$,
	where $\emptyset$ is an element called the \emph{empty word}, $W_{m}:=S^{m}$
	for $m\in\mathbb{N}$ and $W_{*}:=\bigcup_{m\in\mathbb{N}\cup\{0\}}W_{m}$.
	For $w\in W_{*}$, the unique $m\in\mathbb{N}\cup\{0\}$ satisfying $w\in W_{m}$
	is denoted by $|w|$ and called the \emph{length} of $w$.
\item[\textup{(2)}]Let $w,v\in W_{*}$, $w=w_{1}\ldots w_{m}$, $v=v_{1}\ldots v_{n}$.
	We define $wv\in W_{*}$ by $wv:=w_{1}\ldots w_{m}v_{1}\ldots v_{n}$
	($w\emptyset:=w$, $\emptyset v:=v$). We also define
	$w^{(1)}\ldots w^{(k)}$ for $k\geq 3$ and $w^{(1)},\ldots,w^{(k)}\in W_{*}$
	inductively by $w^{(1)}\ldots w^{(k)}:=(w^{(1)}\ldots w^{(k-1)})w^{(k)}$.
	For $w\in W_{*}$ and $n\in\mathbb{N}\cup\{0\}$ we set $w^{n}:=w\ldots w\in W_{n|w|}$.
	We write $w\leq v$ if and only if $w=v\tau$ for some $\tau\in W_{*}$,
	and write $w\not\asymp v$ if and only if neither $w\leq v$ nor $v\leq w$ holds.
\end{itemize}
\end{dfn}
Proposition \ref{prop:circumscribed-inscribed}-(2) enables us to define natural
``contraction maps'' $\Phi_{w}:\TDT\to\TDT$ for each $w\in W_{*}$, which in turn
is used to define the Apollonian gasket $K(\mathscr{D})$ associated with
$\mathscr{D}\in\TDT$, as follows.
\begin{dfn}\label{dfn:TDT-Phi}
We define maps $\Phi_{1},\Phi_{2},\Phi_{3}:\TDT\to\TDT$ by
\begin{equation}\label{eq:TDT-Phi}
\begin{cases}
\Phi_{1}(\mathscr{D}):=(D_{\mathrm{in}}(\mathscr{D}),D_{2},D_{3}),\\
\Phi_{2}(\mathscr{D}):=(D_{1},D_{\mathrm{in}}(\mathscr{D}),D_{3}),\quad\mathscr{D}=(D_{1},D_{2},D_{3})\in\TDT.\\
\Phi_{3}(\mathscr{D}):=(D_{1},D_{2},D_{\mathrm{in}}(\mathscr{D})),
\end{cases}
\end{equation}
We also set $\Phi_{w}:=\Phi_{w_{m}}\circ\cdots\circ\Phi_{w_{1}}$
($\Phi_{\emptyset}:=\id{\TDT}$) and $\mathscr{D}_{w}:=\Phi_{w}(\mathscr{D})$
for $w=w_{1}\ldots w_{m}\in W_{*}$ and $\mathscr{D}\in\TDT$.
\end{dfn}
%
\begin{figure}[t]\captionsetup{width=\linewidth}\centering
\subcaptionbox{Examples without a half-plane\label{fig:AGs-disk}}[.520\linewidth]{%
	\includegraphics[height=70pt]{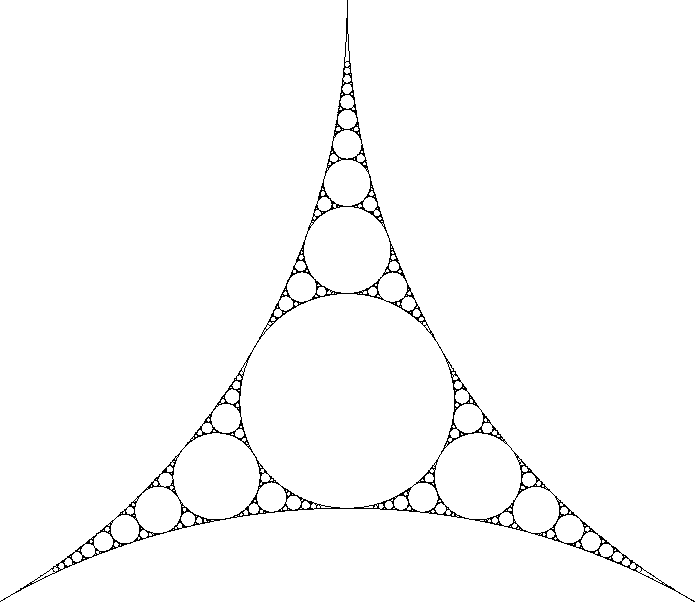}\hspace*{5pt}\includegraphics[height=70pt]{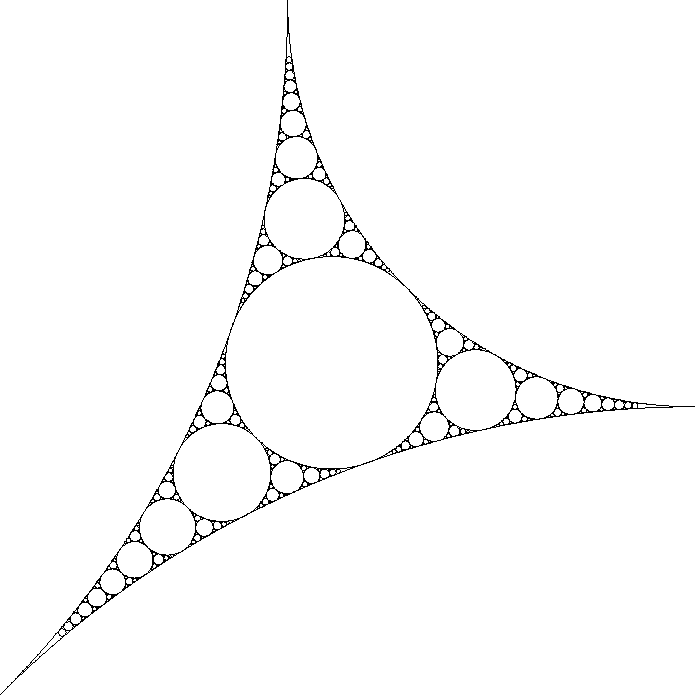}}
\subcaptionbox{Example with a half-plane\label{fig:AG-halfplane}}[.460\linewidth]{%
	\includegraphics[height=70pt]{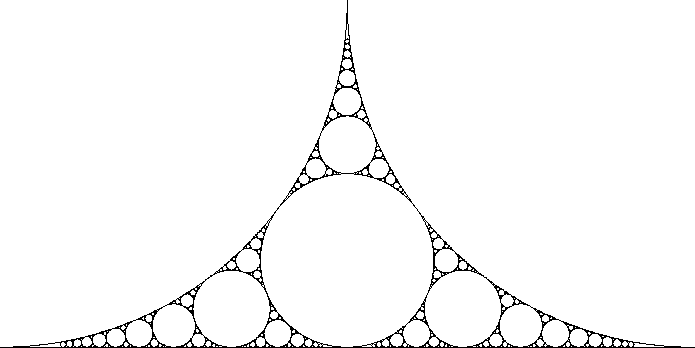}}
\caption{The Apollonian gaskets $K(\mathscr{D})$ associated with $\mathscr{D}\in\TDT$}
\label{fig:AGs}
\end{figure}
%
\begin{dfn}[Apollonian gasket]\label{dfn:AG-construction}
Let $\mathscr{D}\in\TDT$. We define the \emph{Apollonian gasket}
$K(\mathscr{D})$ associated with $\mathscr{D}$ (see Figure \ref{fig:AGs}) by
\begin{equation}\label{eq:AG-construction}
K(\mathscr{D}):=\overline{T(\mathscr{D})}\setminus\bigcup\nolimits_{w\in W_{*}}D_{\mathrm{in}}(\mathscr{D}_{w})
	=\bigcap\nolimits_{m\in\mathbb{N}}\bigcup\nolimits_{w\in W_{m}}\overline{T(\mathscr{D}_{w})}.
\end{equation}
\end{dfn}
The curvatures of the disks involved in \eqref{eq:AG-construction}
admit the following simple expression.
\begin{dfn}\label{dfn:Mw}
We define $4\times 4$ real matrices $M_{1},M_{2},M_{3}$ by
\begin{equation}\label{eq:M1M2M3}\setlength\arraycolsep{3pt}
M_{1}:=\begin{pmatrix}1 & 0 & 0 & 0\\1 & 1 & 0 & 1\\1 & 0 & 1 & 1\\2 & 0 & 0 & 1\end{pmatrix},
	\mspace{10mu}
	M_{2}:=\begin{pmatrix}1 & 1 & 0 & 1\\0 & 1 & 0 & 0\\0 & 1 & 1 & 1\\0 & 2 & 0 & 1\end{pmatrix},
	\mspace{10mu}
	M_{3}:=\begin{pmatrix}1 & 0 & 1 & 1\\0 & 1 & 1 & 1\\0 & 0 & 1 & 0\\0 & 0 & 2 & 1\end{pmatrix}
\end{equation}
and set $M_{w}:=M_{w_{1}}\cdots M_{w_{m}}$ for $w=w_{1}\ldots w_{m}\in W_{*}$
($M_{\emptyset}:=\id{4\times 4}$).
Note that then for any $n\in\mathbb{N}\cup\{0\}$ we easily obtain
\begin{equation}\label{eq:M1nM2nM3n}\setlength\arraycolsep{3pt}
M_{1^{n}}=\begin{pmatrix}1 & 0 & 0 & 0\\n^{2} & 1 & 0 & n\\n^{2} & 0 & 1 & n\\2n & 0 & 0 & 1\end{pmatrix},
	\mspace{10mu}
	M_{2^{n}}=\begin{pmatrix}1 & n^{2} & 0 & n\\0 & 1 & 0 & 0\\0 & n^{2} & 1 & n\\0 & 2n & 0 & 1\end{pmatrix},
	\mspace{10mu}
	M_{3^{n}}=\begin{pmatrix}1 & 0 & n^{2} & n\\0 & 1 & n^{2} & n\\0 & 0 & 1 & 0\\0 & 0 & 2n & 1\end{pmatrix}.
\end{equation}
\end{dfn}
\begin{prop}\label{prop:curvatures-Mw}
Let $\mathscr{D}=(D_{1},D_{2},D_{3})\in\TDT$, let $\alpha,\beta,\gamma,\kappa$
be as in Proposition \textup{\ref{prop:circumscribed-inscribed}}, let $w\in W_{*}$
and $(D_{w,1},D_{w,2},D_{w,3}):=\mathscr{D}_{w}$. Then
\begin{equation}
\bigl(\curv(D_{w,1}),\curv(D_{w,2}),\curv(D_{w,3}),\kappa(\mathscr{D}_{w})\bigr)
	=(\alpha,\beta,\gamma,\kappa)M_{w}.
\label{eq:curvatures-Mw}
\end{equation}
\end{prop}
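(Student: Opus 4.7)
The plan is to prove the identity by induction on the length $m=|w|$. The base case $m=0$ is immediate from $M_{\emptyset}=\id{4\times 4}$ and $\mathscr{D}_{\emptyset}=\mathscr{D}$. The inductive step reduces to the single-letter case: writing $w=w_{1}v$ with $v:=w_{2}\ldots w_{m}\in W_{*}$, one has $\Phi_{w}=\Phi_{v}\circ\Phi_{w_{1}}$ by Definition \ref{dfn:TDT-Phi}, so $\mathscr{D}_{w}=(\Phi_{w_{1}}(\mathscr{D}))_{v}$. Applying the inductive hypothesis to the triple $\Phi_{w_{1}}(\mathscr{D})$ (with the associated curvature vector) and then the single-letter case to $\mathscr{D}$, the result follows from the identity $M_{w}=M_{w_{1}}M_{v}$.

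It therefore remains to verify \eqref{eq:curvatures-Mw} for $w=j\in S$. Consider first $j=1$, so that $\Phi_{1}(\mathscr{D})=(D_{\mathrm{in}}(\mathscr{D}),D_{2},D_{3})$. For the first three entries, Proposition \ref{prop:circumscribed-inscribed}-(2) gives $\curv(D_{\mathrm{in}}(\mathscr{D}))=\alpha+\beta+\gamma+2\kappa$, while the curvatures of $D_{2}$ and $D_{3}$ are unchanged. A direct comparison with the first three columns of $M_{1}$ (applied from the right to the row vector $(\alpha,\beta,\gamma,\kappa)$) confirms agreement.

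The nontrivial part is the fourth entry, namely the identity
\begin{equation*}
\kappa(\Phi_{1}(\mathscr{D}))=\beta+\gamma+\kappa,
\end{equation*}
which matches the last column of $M_{1}$. By definition, $\kappa(\Phi_{1}(\mathscr{D}))^{2}$ equals $\beta\gamma+\gamma(\alpha+\beta+\gamma+2\kappa)+(\alpha+\beta+\gamma+2\kappa)\beta$. Expanding and grouping using $\alpha\beta+\beta\gamma+\gamma\alpha=\kappa^{2}$, the right-hand side simplifies to $(\beta+\gamma)^{2}+2\kappa(\beta+\gamma)+\kappa^{2}=(\beta+\gamma+\kappa)^{2}$, and positivity of $\kappa(\Phi_{1}(\mathscr{D}))$ yields the desired equality. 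The cases $j=2,3$ are entirely analogous, with $\alpha,\beta,\gamma$ playing symmetric roles, and the matrices $M_{2},M_{3}$ are obtained by the corresponding permutation of rows and columns.

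The main obstacle is the algebraic verification for the fourth coordinate in the base case; once this completion-of-squares identity is in hand, the rest is bookkeeping. The formulas \eqref{eq:M1nM2nM3n} for $M_{j^{n}}$ then follow as an immediate corollary by induction on $n$ from the explicit shapes of $M_{1},M_{2},M_{3}$, though they are not needed for the present proof.
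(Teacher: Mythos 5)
Your proof is correct and follows essentially the same route as the paper, which simply invokes induction on $|w|$ together with Proposition \ref{prop:circumscribed-inscribed}-(2) and Definition \ref{dfn:TDT-Phi}; you have merely filled in the details the paper omits, in particular the completion-of-squares identity $\kappa(\Phi_{j}(\mathscr{D}))^{2}=(\beta+\gamma+\kappa)^{2}$ (for $j=1$) and the correct order of composition $\mathscr{D}_{w}=(\Phi_{w_{1}}(\mathscr{D}))_{v}$ matching $M_{w}=M_{w_{1}}M_{v}$. Nothing is missing.
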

\begin{proof}
This follows by an induction in $|w|$ using
Proposition \ref{prop:circumscribed-inscribed}-(2) and Definition \ref{dfn:TDT-Phi}.
\end{proof}

We next collect basic facts regarding the Hausdorff dimension and measure of $K(\mathscr{D})$.
For each $s\in(0,+\infty)$ let $\mathscr{H}^{s}:2^{\mathbb{C}}\to[0,+\infty]$ denote
the $s$-dimensional Hausdorff (outer) measure on $\mathbb{C}$ with respect to the
Euclidean metric, and for each $A\subset\mathbb{C}$ let $\dim_{\mathrm{H}}A$ denote
its Hausdorff dimension; see, e.g., \cite[Chapters 4--7]{Mattila:GSMES} for details.
As is well known, it easily follows from the definition of $\mathscr{H}^{s}$
that the image $f(A)$ of $A\subset\mathbb{C}$ by
$f:A\to\mathbb{C}$ with $\lip_{A}f<+\infty$ satisfies
$\mathscr{H}^{s}(f(A))\leq(\lip_{A}f)^{s}\mathscr{H}^{s}(A)$ for any $s\in(0,+\infty)$
and hence in particular $\dim_{\mathrm{H}}f(A)\leq\dim_{\mathrm{H}}A$.
On the basis of this observation, we easily get the following lemma.
\begin{lem}\label{lem:AG-Hausdorff-dim}
Let $\mathscr{D},\mathscr{D}'\in\TDT$. Then there exists $c\in(0,+\infty)$ such that
$\mathscr{H}^{s}(K(\mathscr{D}))\leq c^{s}\mathscr{H}^{s}(K(\mathscr{D}'))$
for any $s\in(0,+\infty)$. In particular,
$\dim_{\mathrm{H}}K(\mathscr{D})=\dim_{\mathrm{H}}K(\mathscr{D}')$.
\end{lem}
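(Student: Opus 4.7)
The plan is to produce an orientation-preserving M\"obius transformation $\phi\colon\widehat{\mathbb{C}}\to\widehat{\mathbb{C}}$ sending $\mathscr{D}'$ componentwise to $\mathscr{D}$, to verify that $\phi$ intertwines with the iterated construction in \eqref{eq:AG-construction} so that $\phi(K(\mathscr{D}'))=K(\mathscr{D})$, and then to use the boundedness of the ideal triangles to ensure that $\phi$ is smooth---hence Lipschitz---on a neighborhood of the compact set $K(\mathscr{D}')$, so that the Hausdorff-measure inequality for Lipschitz images recalled just above the lemma gives the claim.

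For the construction of $\phi$, I would appeal to the classical fact that the orientation-preserving M\"obius group acts simply transitively on ordered triples of distinct points of $\widehat{\mathbb{C}}$ to obtain a unique $\phi$ with $\phi(q_{j}(\mathscr{D}'))=q_{j}(\mathscr{D})$ for each $j\in S$. Since $\phi$ sends generalized circles to generalized circles, preserves tangency, and preserves orientation, $\phi(\mathscr{D}')$ is a positively oriented tangential disk triple whose tangency points form $\{q_{j}(\mathscr{D})\}_{j\in S}$ in the same cyclic order; a short verification---e.g., using Proposition \ref{prop:circumscribed-inscribed}-(1), which characterizes $\partial D_{j}$ as the unique generalized circle through $q_{k}(\mathscr{D}),q_{l}(\mathscr{D})$ orthogonal to $\partial D_{\mathrm{cir}}(\mathscr{D})$---shows that such a triple is unique, forcing $\phi(\mathscr{D}')=\mathscr{D}$.

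For the intertwining, Proposition \ref{prop:circumscribed-inscribed}-(2) characterizes $D_{\mathrm{in}}(\mathscr{E})$ as the unique generalized disk contained in $T(\mathscr{E})$ tangent to each member of $\mathscr{E}$; this characterization is preserved by $\phi$, so $\phi\circ\Phi_{j}=\Phi_{j}\circ\phi$ on $\TDT$ for every $j\in S$, and consequently $\phi(\Phi_{w}(\mathscr{D}'))=\Phi_{w}(\mathscr{D})$ together with $\phi(T(\Phi_{w}(\mathscr{D}')))=T(\Phi_{w}(\mathscr{D}))$ for every $w\in W_{*}$. Applying $\phi$ to the right-most expression in \eqref{eq:AG-construction} then yields $\phi(K(\mathscr{D}'))=K(\mathscr{D})$. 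By Definition \ref{dfn:tangential-disk-triple}-(2) both ideal triangles are bounded, so $K(\mathscr{D})\subset\overline{T(\mathscr{D})}$ and $K(\mathscr{D}')\subset\overline{T(\mathscr{D}')}$ are compact, and $\phi^{-1}(\infty)\not\in K(\mathscr{D}')$ (otherwise $\infty\in K(\mathscr{D})$). Thus $\phi$ is real-analytic on a neighborhood of $K(\mathscr{D}')$, giving $c:=\lip_{K(\mathscr{D}')}\phi\in(0,+\infty)$, and the Hausdorff-measure inequality above the lemma yields $\mathscr{H}^{s}(K(\mathscr{D}))=\mathscr{H}^{s}(\phi(K(\mathscr{D}')))\leq c^{s}\mathscr{H}^{s}(K(\mathscr{D}'))$ for every $s\in(0,+\infty)$; interchanging $\mathscr{D}$ and $\mathscr{D}'$ yields the equality of Hausdorff dimensions.

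The main obstacle will be the first step, namely the careful check that the uniquely determined M\"obius map $\phi$ sends $\mathscr{D}'$ to $\mathscr{D}$ with matching labelling and orientation rather than to a relabelled or complementary configuration. The positive-orientation requirement built into the definition of $\TDT$ is exactly what makes this identification unambiguous.
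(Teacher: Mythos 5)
Your proposal is correct and follows essentially the same route as the paper: the paper's proof also takes the unique orientation-preserving M\"obius transformation $f_{\mathscr{D}',\mathscr{D}}$ matching the tangency points, notes that it carries $K(\mathscr{D}')$ onto $K(\mathscr{D})$ because M\"obius transformations permute open disks of $\widehat{\mathbb{C}}$, and concludes via the Lipschitz bound $\lip_{\overline{D_{\mathrm{cir}}(\mathscr{D}')}}f_{\mathscr{D}',\mathscr{D}}<+\infty$ together with the Hausdorff-measure inequality recalled before the lemma. You simply spell out in more detail the identification $\phi(\mathscr{D}')=\mathscr{D}$ and the intertwining with $\Phi_{w}$, which the paper leaves implicit.
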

\begin{proof}
Let $f_{\mathscr{D}',\mathscr{D}}$ denote the unique orientation-preserving
M\"{o}bius transformation on $\widehat{\mathbb{C}}$ such that
$f_{\mathscr{D}',\mathscr{D}}(q_{j}(\mathscr{D}'))=q_{j}(\mathscr{D})$ for any $j\in S$.
Then $f_{\mathscr{D}',\mathscr{D}}(K(\mathscr{D}'))=K(\mathscr{D})$, since a M\"{o}bius
transformation on $\widehat{\mathbb{C}}$ maps any open disk in $\widehat{\mathbb{C}}$
onto another. Now the assertion follows from the observation in the last paragraph and
$\lip_{\overline{D_{\mathrm{cir}}(\mathscr{D}')}}f_{\mathscr{D}',\mathscr{D}}<+\infty$.
\end{proof}
\begin{dfn}\label{dfn:AG-Hausdorff-dim}
Noting Lemma \ref{lem:AG-Hausdorff-dim}, we define
\begin{equation}\label{eq:AG-Hausdorff-dim}
d_{\mathsf{AG}}:=\dim_{\mathrm{H}}K(\mathscr{D}),
	\qquad\textrm{where $\mathscr{D}\in\TDT$ is arbitrary.}
\end{equation}
\end{dfn}
\begin{thm}[Boyd \cite{Boyd:Mathematika1973}; see also \cite{Hirst:JLMS1967,MauldinUrbanski:AdvMath1998,McMullen:AmerJMath1998}]\label{thm:AG-Hausdorff-dim-bounds}
\begin{equation}\label{eq:AG-Hausdorff-dim-bounds}
1.300197<d_{\mathsf{AG}}<1.314534.
\end{equation}
\end{thm}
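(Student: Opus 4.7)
The plan is to derive explicit bounds on $d_{\mathsf{AG}}$ from a pressure-type characterization based on the conformal self-similar structure of $K(\mathscr{D})$ provided by the maps $\Phi_{w}$ from Definition \ref{dfn:TDT-Phi} and the curvature recursion in Proposition \ref{prop:curvatures-Mw}. The two geometric inputs I would rely on are: the covering $K(\mathscr{D}) \subset \bigcup_{w \in W_{m}} \overline{T(\mathscr{D}_{w})}$ from \eqref{eq:AG-construction}, combined with the inclusion $\overline{T(\mathscr{D}_{w})} \subset \overline{D_{\mathrm{cir}}(\mathscr{D}_{w})}$ from Proposition \ref{prop:circumscribed-inscribed}-(1), which yields $\diam \overline{T(\mathscr{D}_{w})} \leq 2/\kappa(\mathscr{D}_{w})$; and the fact (implicit in \eqref{eq:AG-construction}) that the open triangles $T(\mathscr{D}_{w})$ for distinct $w \in W_{m}$ are pairwise disjoint.

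For the upper bound, I would use the level-$m$ cover to estimate
\begin{equation*}
\mathscr{H}^{s}(K(\mathscr{D})) \leq 2^{s} \sum_{w \in W_{m}} \kappa(\mathscr{D}_{w})^{-s}.
\end{equation*}
Proposition \ref{prop:curvatures-Mw} expresses $\kappa(\mathscr{D}_{w})$ as the last coordinate of $(\alpha,\beta,\gamma,\kappa)M_{w}$, so the right-hand side becomes an explicit sum of entries of matrix products built from \eqref{eq:M1M2M3}. Choosing a candidate value $s_{+}$ slightly below $1.314534$ and verifying, via rigorous (e.g.\ interval) arithmetic for a sufficiently large $m$, that this sum tends to $0$ as $m \to \infty$ would give $d_{\mathsf{AG}} \leq s_{+}$.

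For the lower bound, I would apply the mass-distribution principle. Construct a Borel probability measure $\mu$ on $K(\mathscr{D})$ satisfying $\mu(\overline{T(\mathscr{D}_{w})}) \asymp \kappa(\mathscr{D}_{w})^{-s_{-}}$ for a candidate $s_{-}$ slightly above $1.300197$, by a Bowen-type construction using an approximate eigenvector of a truncated transfer operator. The Möbius map $f_{\mathscr{D},\mathscr{D}_{w}}$ appearing in the proof of Lemma \ref{lem:AG-Hausdorff-dim} has Lipschitz constant of order $\kappa(\mathscr{D}_{w})^{-1}$ on $\overline{D_{\mathrm{cir}}(\mathscr{D})}$, which after a standard bounded-distortion argument gives $\mu(B(x,r)) \leq C r^{s_{-}}$ for all $x \in K(\mathscr{D})$ and all sufficiently small $r > 0$; Frostman's lemma then yields $d_{\mathsf{AG}} \geq s_{-}$.

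The hard part is to certify the explicit numerical constants in \eqref{eq:AG-Hausdorff-dim-bounds}. The system is only \emph{conformally}, not affinely, self-similar, so $\kappa(\mathscr{D}_{w})$ does not factor multiplicatively along $w$, and the matrices $M_{1}, M_{2}, M_{3}$ do not commute, so no single eigenvalue of them controls $\sum_{w \in W_{m}} \kappa(\mathscr{D}_{w})^{-s}$. One is forced to analyze the associated transfer operator on a suitable space of functions parametrized by the ``shape'' of the triple $\mathscr{D}_{w}$, show that $d_{\mathsf{AG}}$ is the value of $s$ at which its spectral radius equals $1$, and then approximate this spectral radius numerically with certified error bounds, as Boyd originally carried out in \cite{Boyd:Mathematika1973} and as was subsequently refined in \cite{MauldinUrbanski:AdvMath1998,McMullen:AmerJMath1998}.
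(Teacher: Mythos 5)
The paper does not prove this theorem: it is imported verbatim from Boyd \cite{Boyd:Mathematika1973} (with \cite{Hirst:JLMS1967,MauldinUrbanski:AdvMath1998,McMullen:AmerJMath1998} as refinements), and no argument for the numerical bounds appears anywhere in the text. Your sketch is a reasonable reconstruction of how such bounds are obtained in that literature --- covering by the circumscribed disks $\overline{D_{\mathrm{cir}}(\mathscr{D}_{w})}$ of diameter $2/\kappa(\mathscr{D}_{w})$ for the upper bound, a mass-distribution/Frostman argument for the lower bound, and a transfer-operator (pressure) characterization of $d_{\mathsf{AG}}$ to locate the critical exponent --- and you correctly identify that the entire difficulty lies in certifying the decimal digits, which is precisely the computation carried out in the cited papers. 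So your proposal is best read as an annotated pointer to the same sources the paper cites, not as an independent proof of \eqref{eq:AG-Hausdorff-dim-bounds}.

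One concrete gap in the upper-bound step as written: you cannot conclude that $\sum_{w\in W_{m}}\kappa(\mathscr{D}_{w})^{-s_{+}}\to 0$ by ``verifying for a sufficiently large $m$'' that the level-$m$ sum is small, because a single finite level says nothing about the limit. You need an approximate submultiplicativity $\sum_{w\in W_{m+n}}\kappa(\mathscr{D}_{w})^{-s}\leq C\bigl(\sum_{w\in W_{m}}\kappa(\mathscr{D}_{w})^{-s}\bigr)\bigl(\sum_{v\in W_{n}}\kappa(\mathscr{D}_{v})^{-s}\bigr)$ with an explicit distortion constant $C$, so that a single verified level with sum below $1/C$ forces geometric decay; since $\kappa(\mathscr{D}_{wv})$ is the last entry of $gM_{w}M_{v}$ and the matrices $M_{1},M_{2},M_{3}$ of \eqref{eq:M1M2M3} do not commute, this bounded-distortion estimate is a genuine step, not a formality. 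You acknowledge the non-multiplicativity in your final paragraph, but the upper-bound argument as stated does not yet incorporate the fix. The lower-bound sketch has the analogous issue hidden in the phrase ``standard bounded-distortion argument,'' and the constant $1.300197$ cannot emerge from it without the same certified spectral computation.
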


Moreover, for the $d_{\mathsf{AG}}$-dimensional Hausdorff measure 
$\mathscr{H}^{d_{\mathsf{AG}}}(K(\mathscr{D}))$ of $K(\mathscr{D})$
we have the following theorem, which was proved first by Sullivan
\cite{Sullivan:Acta1984} through considerations on the isometric action of
M\"{o}bius transformations on the three-dimensional hyperbolic space, and 
later by Mauldin and Urba\'{n}ski \cite{MauldinUrbanski:AdvMath1998}
through purely two-dimensional arguments.
\begin{thm}[{\cite[Theorem 2]{Sullivan:Acta1984}, \cite[Theorem 2.6]{MauldinUrbanski:AdvMath1998}}]\label{thm:AG-Hausdorff-meas}
\begin{equation}\label{eq:AG-Hausdorff-meas}
0<\mathscr{H}^{d_{\mathsf{AG}}}(K(\mathscr{D}))<+\infty
	\qquad\textrm{for any $\mathscr{D}\in\TDT$.}
\end{equation}
\end{thm}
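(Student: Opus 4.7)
My plan is to realize $K(\mathscr{D})$ as the attractor of a conformal iterated function system of M\"obius maps and deduce the two-sided estimate from Mauldin--Urba\'nski's theory of such systems \cite{MauldinUrbanski:AdvMath1998}. By Lemma \ref{lem:AG-Hausdorff-dim}, it suffices to fix one $\mathscr{D}\in\TDTo$. For each $w\in W_{*}$ let $f_{w}:\widehat{\mathbb{C}}\to\widehat{\mathbb{C}}$ be the unique orientation-preserving M\"obius transformation with $f_{w}(q_{j}(\mathscr{D}))=q_{j}(\mathscr{D}_{w})$ for all $j\in S$. Since M\"obius maps preserve tangent disk triples, $f_{w}(\overline{T(\mathscr{D})})=\overline{T(\mathscr{D}_{w})}$, and by Proposition \ref{prop:circumscribed-inscribed}, $\diam\overline{T(\mathscr{D}_{w})}\asymp\kappa(\mathscr{D}_{w})^{-1}$ with constants depending only on $\mathscr{D}$. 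Since $f_{w}$ restricts to a conformal isomorphism $D_{\mathrm{cir}}(\mathscr{D})\to D_{\mathrm{cir}}(\mathscr{D}_{w})$, a standard Koebe-type distortion estimate on any truncation $\overline{T(\mathscr{D})}\setminus\bigcup_{j\in S}B(q_{j}(\mathscr{D}),\varepsilon)$ yields $|f_{w}'|\asymp\kappa(\mathscr{D}_{w})^{-1}$ uniformly in $w$.

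For the upper bound, the covers $\{\overline{T(\mathscr{D}_{w})}\}_{w\in W_{m}}$ of $K(\mathscr{D})$ have mesh tending to $0$, giving $\mathscr{H}^{s}(K(\mathscr{D}))\lesssim\liminf_{m}Z_{m}(s)$ where $Z_{m}(s):=\sum_{w\in W_{m}}\kappa(\mathscr{D}_{w})^{-s}$. Bounded distortion (on a cusp-truncated region) combined with the chain rule produces the submultiplicativity $Z_{m+n}(s)\lesssim Z_{m}(s)Z_{n}(s)$, whence the pressure $P(s):=\lim_{m}m^{-1}\log Z_{m}(s)$ is well-defined, convex, and strictly decreasing. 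Two-sided comparisons between Hausdorff covers and the partition functions identify $d_{\mathsf{AG}}$ as the unique zero of $P$ (Bowen's formula), and a dyadic grouping of words according to the size of $\kappa(\mathscr{D}_{w})^{-1}$ upgrades $P(d_{\mathsf{AG}})=0$ to $\sup_{m}Z_{m}(d_{\mathsf{AG}})<\infty$, forcing $\mathscr{H}^{d_{\mathsf{AG}}}(K(\mathscr{D}))<\infty$. For the lower bound, construct a $d_{\mathsf{AG}}$-conformal probability measure $\mu$ on $K(\mathscr{D})$ as a weak-$\ast$ subsequential limit of the atomic measures $Z_{m}(d_{\mathsf{AG}})^{-1}\sum_{w\in W_{m}}\kappa(\mathscr{D}_{w})^{-d_{\mathsf{AG}}}\delta_{x_{w}}$ with $x_{w}\in\overline{T(\mathscr{D}_{w})}$, so that $\mu(\overline{T(\mathscr{D}_{w})})\asymp\diam\overline{T(\mathscr{D}_{w})}^{d_{\mathsf{AG}}}$. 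A Besicovitch-type covering argument, controlling how many cells of comparable diameter can meet a given Euclidean ball, then yields $\mu(B(x,r))\lesssim r^{d_{\mathsf{AG}}}$, and the mass distribution principle gives $\mathscr{H}^{d_{\mathsf{AG}}}(K(\mathscr{D}))\gtrsim\mu(K(\mathscr{D}))=1>0$.

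The hard part will be dealing with the fact that the three vertices $q_{1}(\mathscr{D}),q_{2}(\mathscr{D}),q_{3}(\mathscr{D})$ are parabolic fixed points of the Apollonian dynamics. From \eqref{eq:M1nM2nM3n} one reads off $\kappa(\mathscr{D}_{j^{n}})\asymp n$ for $j\in S$, so contraction along the cuspidal words $j^{n}$ is only polynomial, $\diam\overline{T(\mathscr{D}_{j^{n}})}\asymp n^{-1}$. Consequently neither the distortion estimate nor the submultiplicativity above is truly uniform on all of $\overline{T(\mathscr{D})}$ or over all $w$, and the standard uniformly hyperbolic CIFS theory does not apply directly; both the uniform bound on $Z_{m}(d_{\mathsf{AG}})$ and the pointwise ball estimate for $\mu$ require an inducing scheme that accelerates through the cusps (for instance, first-return times to the complement of small neighbourhoods of $\{q_{j}(\mathscr{D})\}_{j\in S}$) together with an explicit asymptotic analysis of the cusp contributions to recover exponential contraction on the induced subsystem. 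This is the technical heart of \cite{MauldinUrbanski:AdvMath1998}; Sullivan's earlier proof in \cite{Sullivan:Acta1984} bypasses these issues by identifying $\mu$ with the Patterson--Sullivan measure of the Apollonian Kleinian group acting by isometries on hyperbolic $3$-space, where the group-theoretic structure makes its construction and properties canonical.
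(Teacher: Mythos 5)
The paper does not actually prove Theorem \ref{thm:AG-Hausdorff-meas}: it is imported as a citation from Sullivan \cite{Sullivan:Acta1984} and Mauldin--Urba\'{n}ski \cite{MauldinUrbanski:AdvMath1998}, with Remark \ref{rmk:AG-Hausdorff-meas} as the only in-text commentary, so there is no internal argument to compare yours against. What you have written is a faithful outline of the Mauldin--Urba\'{n}ski route (a conformal system of M\"{o}bius maps $f_{w}$, pressure/Bowen's formula for finiteness, a conformal measure plus the mass distribution principle for positivity), together with an accurate pointer to Sullivan's alternative via Patterson--Sullivan theory, and --- most importantly --- you correctly locate the crux: the three tangency points are parabolic, $\kappa(\mathscr{D}_{j^{n}})\asymp n$ by \eqref{eq:M1nM2nM3n} gives only polynomial contraction along cuspidal words, so uniform distortion and two-sided almost-multiplicativity of $Z_{m}(s)$ fail and one must induce to an infinite uniformly contracting system. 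Two points you should make explicit if this were to be written out: first, the comparison $\diam\overline{T(\mathscr{D}_{w})}\asymp\kappa(\mathscr{D}_{w})^{-1}$ is only half-obvious (the upper bound is Proposition \ref{prop:circumscribed-inscribed}-(1), but the lower bound needs a separate Euclidean argument, since the inscribed disk only gives $\gtrsim\kappa(\mathscr{D}_{w})^{-2}$ near cusps); second, the reason the induced cusp series converge and $\mathscr{H}^{d_{\mathsf{AG}}}$ comes out both positive and finite is precisely that $d_{\mathsf{AG}}>1$ (Theorem \ref{thm:AG-Hausdorff-dim-bounds}) exceeds the rank of the parabolic subgroups --- for geometrically finite groups with cusps, $P(h)=0$ alone does not force $0<\mathscr{H}^{h}<+\infty$, and this numerical input deserves to be flagged rather than absorbed into ``explicit asymptotic analysis of the cusp contributions.'' As a self-contained proof the proposal is of course incomplete, since the inducing scheme it defers to is exactly the technical content of \cite{MauldinUrbanski:AdvMath1998}; but as an account of why the cited theorem holds, and of which approach each reference takes, it is accurate.
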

\begin{rmk}\label{rmk:AG-Hausdorff-meas}
The self-conformality of $K(\mathscr{D})$ is required most crucially in the proof of
Theorem \ref{thm:AG-Hausdorff-meas}, and is heavily used further to obtain certain
equicontinuity properties of $\{\mathscr{H}^{d_{\mathsf{AG}}}(K(\mathscr{D}_{w}))\}_{w\in W_{*}}$
as a family of functions of $\bigl(\curv(D_{1}),\curv(D_{2}),\curv(D_{3})\bigr)$, where
$(D_{1},D_{2},D_{3}):=\mathscr{D}$. This equicontinuity is the key to verifying the
ergodic-theoretic assumptions of Kesten's renewal theorem \cite[Theorem 2]{Kesten:AOP1974},
which is then applied to conclude Theorem \ref{thm:general-counting-AG} below.
\end{rmk}
%
\section{The canonical Dirichlet form on the Apollonian gasket}\label{sec:AG-DF}
In this section, we introduce the canonical Dirichlet form on the Apollonian gasket
$K(\mathscr{D})$, whose infinitesimal generator is our Laplacian on $K(\mathscr{D})$,
and state its properties established by the author in \cite{K:WeylAG}; see
\cite{FOT,CF} for the basics of the theory of regular symmetric Dirichlet forms.

Before giving its actual definition, we briefly summarize how it has been discovered.
The initial idea for its construction was suggested by the theory of analysis on the
\emph{harmonic Sierpi\'{n}ski gasket} $K_{\mathcal{H}}$ (Figure \ref{fig:SGs}, right)
due to Kigami \cite{Kig:hdSG,Kig:MRG}. This is a compact subset of $\mathbb{C}$
defined as the image of a \emph{harmonic map} $\Phi:K\to\mathbb{C}$ from
the \emph{Sierpi\'{n}ski gasket} $K$ (Figure \ref{fig:SGs}, left) to $\mathbb{C}$.
More precisely, let $V_{0}=\{q_{1},q_{2},q_{3}\}$ be the set of the three outmost
vertices of $K$, let $(\mathcal{E},\mathcal{F})$ be the (self-similar)
\emph{standard Dirichlet form} on $K$
(so that $\mathcal{F}$ is known to be a dense subalgebra of $(\mathcal{C}(K),\|\cdot\|_{\sup})$),
and let $h^{K}_{1},h^{K}_{2}\in\mathcal{F}$ be \emph{$\mathcal{E}$-harmonic} on $K\setminus V_{0}$
and satisfy $\mathcal{E}(h^{K}_{j},h^{K}_{k})=\delta_{jk}$ for any $j,k\in\{1,2\}$
(see \cite[Sections 2 and 3]{K:aghSG} and the references therein for details). Then we can
define a continuous map $\Phi:K\to\mathbb{C}$ by $\Phi(x):=\bigl(h^{K}_{1}(x),h^{K}_{2}(x)\bigr)$,
and its image $K_{\mathcal{H}}:=\Phi(K)$ is called the harmonic Sierpi\'{n}ski gasket.
\begin{figure}[t]\captionsetup{width=\linewidth}\centering
	\begin{minipage}{.480\linewidth}\centering
		\includegraphics[height=80pt]{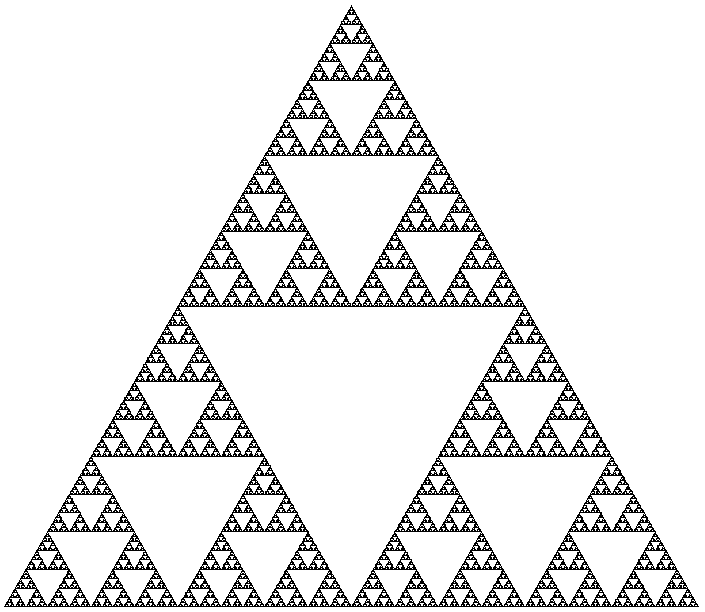}
	\end{minipage}\hspace*{6pt}
	\begin{picture}(0,0)
		\thicklines
		\put(-57,4){\vector(1,0){126}}
		\put(-56,17){$\Phi:=\biggl(\begin{matrix}h^{K}_{1}\\h^{K}_{2}\end{matrix}\biggr):K\to K_{\mathcal{H}}\hookrightarrow\mathbb{C}$}
		\put(-52,-8){$h^{K}_{j}$: $\mathcal{E}$-harmonic on $K\setminus V_{0}$}
		\put(-30,-22){$\mathcal{E}(h^{K}_{j},h^{K}_{k})=\delta_{jk}$}
	\end{picture}
	\begin{minipage}{.480\linewidth}\centering
		\includegraphics[height=80pt]{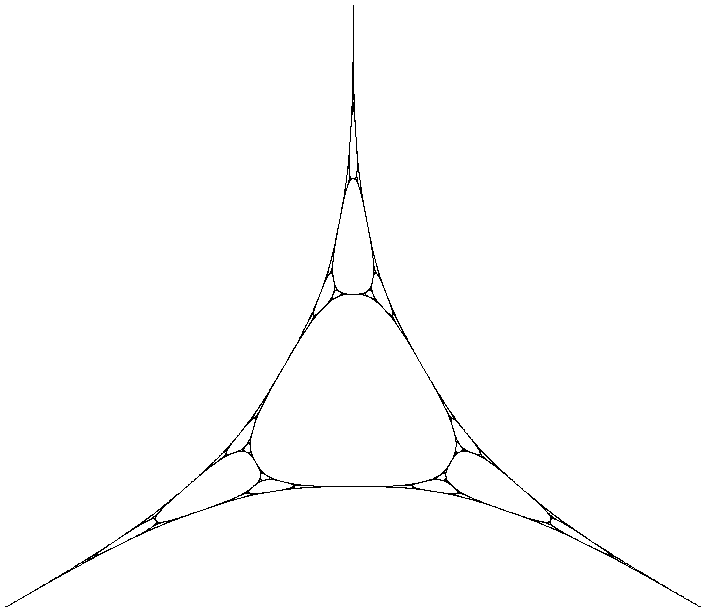}
	\end{minipage}
	\caption{Sierpi\'{n}ski gasket $K$ and harmonic Sierpi\'{n}ski gasket $K_{\mathcal{H}}$}\label{fig:SGs}
\end{figure}
In fact, Kigami has proved in \cite[Theorem 3.6]{Kig:hdSG} that $\Phi:K\to K_{\mathcal{H}}$
is injective and hence a homeomorphism, and further in \cite[Theorem 4.1]{Kig:hdSG} that
a one-dimensional, measure-theoretic ``Riemannian structure'' can be defined on $K$
through the embedding $\Phi$ and the \emph{$\mathcal{E}$-energy measure} $\mu$%
\footnote{$\mu$ was first introduced in \cite{Kus:DF} and is called the \emph{Kusuoka measure} on $K$.}
of $\Phi$, which plays the role of the ``Riemannian volume measure'' and is given by
\begin{equation}\label{eq:Kusuoka-meas}
\mu:=\mu_{\langle h^{K}_{1}\rangle}+\mu_{\langle h^{K}_{2}\rangle}=\textrm{``}|\nabla\Phi|^{2}\,d\vol\textrm{''};
\end{equation}
here $\mu_{\langle u\rangle}$ denotes the $\mathcal{E}$-energy measure of $u\in\mathcal{F}$ playing
the role of ``$|\nabla u|^{2}\,d\vol$'' and defined as the unique Borel measure on $K$ such that
\begin{equation}\label{eq:energy-meas}
\int_{K}f\,d\mu_{\langle u\rangle}=\mathcal{E}(fu,u)-\frac{1}{2}\mathcal{E}(f,u^{2})
	\qquad\textrm{for any $f\in\mathcal{F}$.}
\end{equation}
Kigami has also proved in \cite[Theorem 6.3]{Kig:MRG} that the heat kernel of
$(K,\mu,\mathcal{E},\mathcal{F})$ satisfies the two-sided \emph{Gaussian} estimate
of the same form as for Riemannian manifolds, and further detailed studies of
$(K,\mu,\mathcal{E},\mathcal{F})$ have been done in \cite{K:stahSG,KoskelaZhou,K:aghSG};
see \cite{K:aghSG} and the references therein for details.

As observed from Figures \ref{fig:AGs} and \ref{fig:SGs}, the overall geometric structure
of the Apollonian gasket $K(\mathscr{D})$ resembles that of the harmonic Sierpi\'{n}ski
gasket $K_{\mathcal{H}}$, and then it is natural to expect that the above-mentioned
framework of the measurable Riemannian structure on $K$ induced by the embedding
$\Phi:K\to K_{\mathcal{H}}$ can be adapted to the setting of $K(\mathscr{D})$ for
$\mathscr{D}\in\TDTo$ to construct a ``geometrically canonical'' Dirichlet form on
$K(\mathscr{D})$. Namely, it is expected that there exists a non-zero strongly local
regular symmetric Dirichlet form $(\mathcal{E}^{\mathscr{D}},\mathcal{F}_{\mathscr{D}})$
over $K(\mathscr{D})$ \emph{with respect to which the coordinate functions
$\re(\cdot)|_{K(\mathscr{D})},\im(\cdot)|_{K(\mathscr{D})}$ are harmonic on
$K(\mathscr{D})\setminus V_{0}(\mathscr{D})$}.
The possibility of such a construction was first noted by Teplyaev in
\cite[Theorem 5.17]{Tep:energySG}, and in \cite{K:WeylAG} the author has completed
the construction of $(\mathcal{E}^{\mathscr{D}},\mathcal{F}_{\mathscr{D}})$ and
further proved its uniqueness and concrete identification, summarized as follows.
We start with some definitions.
\begin{dfn}\label{dfn:circular-arc}
\begin{itemize}[label=\textup{(1)},align=left,leftmargin=*,topsep=0pt,parsep=0pt,itemsep=0pt]
\item[\textup{(1)}]A subset $C$ of $\mathbb{C}$ is called a \emph{circular arc}
	if and only if $C=\{z_{0}+re^{i\theta}\mid\theta\in[\alpha,\beta]\}$
	for some $z_{0}\in\mathbb{C}$, $r\in(0,+\infty)$ and $\alpha,\beta\in\mathbb{R}$
	with $\alpha<\beta$. In this case we set $\cent(C):=z_{0}$, $\rad(C):=r$ and
	$D_{C}:=\interior\{(1-t)\cent(C)+tz\mid\textrm{$z\in C$, $t\in[0,1]$}\}$.
\item[\textup{(2)}]For a circular arc $C$, the length measure on $(C,\mathscr{B}(C))$ is
	denoted by $\mathscr{H}^{1}_{C}$, the gradient vector along $C$ at $x\in C$ of a function
	$u:C\to\mathbb{R}$ is denoted by $\nabla_{C}u(x)$ provided $u$ is differentiable at $x$, and we set
	$W^{1,2}(C):=\{u\in\mathbb{R}^{C}\mid\textrm{$u$ is a.c.\ on $C$, $|\nabla_{C}u|\in L^{2}(C,\mathscr{H}^{1}_{C})$}\}$,
	where ``a.c.''\ is an abbreviation of ``absolutely continuous''.
\item[\textup{(3)}]We define $h_{1},h_{2}:\mathbb{C}\to\mathbb{R}$ by
	$h_{1}(z):=\re z$ and $h_{2}(z):=\im z$.
\end{itemize}
\end{dfn}
\begin{dfn}\label{dfn:AG-arcs}
Let $\mathscr{D}=(D_{1},D_{2},D_{3})\in\TDTo$. We define
\begin{equation}\label{eq:AG-arcs}
\mathscr{A}_{\mathscr{D}}:=\{\overline{T(\mathscr{D})}\cap\partial D_{j}\mid j\in S\}
	\cup\{\partial D_{\mathrm{in}}(\mathscr{D}_{w})\mid w\in W_{*}\}
\end{equation}
and set $K^{0}(\mathscr{D}):=\bigcup_{C\in\mathscr{A}_{\mathscr{D}}}C$,
so that each $C\in\mathscr{A}_{\mathscr{D}}$ is a circular arc,
$\bigcup_{C\in\mathscr{A}_{\mathscr{D}}}D_{C}=\triangle(\mathscr{D})\setminus K(\mathscr{D})$,
$\bigcup_{C,A\in\mathscr{A}_{\mathscr{D}},\,C\not=A}(C\cap A)=\bigcup_{w\in W_{*}}V_{0}(\mathscr{D}_{w})$,
and an induction in $|w|$ easily shows that for any $w\in W_{*}$,
\begin{equation}\label{eq:AG-arcs-Kw}
\mathscr{A}_{\mathscr{D}_{w}}
	=\{C\cap K(\mathscr{D}_{w})\mid C\in\mathscr{A}_{\mathscr{D}}\}\setminus\{\emptyset\}.
\end{equation}
\end{dfn}
The canonical Dirichlet form $(\mathcal{E}^{\mathscr{D}},\mathcal{F}_{\mathscr{D}})$
on $K(\mathscr{D})$ and the associated ``Riemannian volume measure''
similar to \eqref{eq:Kusuoka-meas} turn out to be expressed explicitly
in terms of the circle packing structure of $K(\mathscr{D})$, as follows.
\begin{dfn}[cf.\ {\cite[Theorems 5.11 and 5.13]{K:WeylAG}}]\label{dfn:AG-volume-meas-identify}
Let $\mathscr{D}\in\TDTo$.
\begin{itemize}[label=\textup{(1)},align=left,leftmargin=*,topsep=0pt,parsep=0pt,itemsep=0pt]
\item[\textup{(1)}]We define a Borel measure $\mu^{\mathscr{D}}$ on $K(\mathscr{D})$ by
	\begin{equation}\label{eq:AG-volume-meas-identify}
	\mu^{\mathscr{D}}:=\sum\nolimits_{C\in\mathscr{A}_{\mathscr{D}}}\rad(C)\mathscr{H}^{1}_{C}(\cdot\cap C),
	\end{equation}
	so that for any $w\in W_{*}$ we have
	$\mu^{\mathscr{D}}(K(\mathscr{D}_{w}))=2\vol_{2}(\triangle(\mathscr{D}_{w}))$
	by \eqref{eq:AG-arcs-Kw},
	$\bigcup_{C\in\mathscr{A}_{\mathscr{D}_{w}}}D_{C}=\triangle(\mathscr{D}_{w})\setminus K(\mathscr{D}_{w})$
	and $\vol_{2}(K(\mathscr{D}_{w}))=0$.
\item[\textup{(2)}]
	For each $u\in\mathbb{R}^{K^{0}(\mathscr{D})}$ with $u|_{C}$ a.c.\ on $C$
	for any $C\in\mathscr{A}_{\mathscr{D}}$, we define a $\mu^{\mathscr{D}}$-a.e.\ defined,
	$\mathbb{R}^{2}$-valued Borel measurable map $\nabla_{\mathscr{D}}u$
	by $(\nabla_{\mathscr{D}}u)|_{C}:=\nabla_{C}(u|_{C})$
	for each $C\in\mathscr{A}_{\mathscr{D}}$, so that
	$|\nabla_{\mathscr{D}}u|^{2}\,d\mu^{\mathscr{D}}
		=\sum_{C\in\mathscr{A}_{\mathscr{D}}}|\nabla_{C}(u|_{C})|^{2}\rad(C)\,d\mathscr{H}^{1}_{C}$.
	Then we further define
	\begin{equation}\label{eq:AG-DF-identify-domain}
	\mathcal{F}_{\mathscr{D}}:=W^{1,2}_{\mathscr{D}}:=
		\biggl\{u\in\mathbb{R}^{K^{0}(\mathscr{D})}\biggm|
		\begin{minipage}{138.9pt}
			$u|_{C}\in W^{1,2}(C)$ for any $C\in\mathscr{A}_{\mathscr{D}}$,
			$|\nabla_{\mathscr{D}}u|\in L^{2}(K(\mathscr{D}),\mu^{\mathscr{D}})$
		\end{minipage}\biggr\}
	\end{equation}
	and set $\mathcal{C}_{\mathscr{D}}:=\{u\in\mathcal{C}(K(\mathscr{D}))\mid u|_{K^{0}(\mathscr{D})}\in\mathcal{F}_{\mathscr{D}}\}$
	and $\mathcal{C}^{\mathrm{lip}}_{\mathscr{D}}:=\{u\in\mathcal{C}(K(\mathscr{D}))\mid\lip_{K(\mathscr{D})}u<+\infty\}$,
	which are considered as linear subspaces of $\mathcal{F}_{\mathscr{D}}$ through the linear injection
	$\mathcal{C}(K(\mathscr{D}))\ni u\mapsto u|_{K^{0}(\mathscr{D})}\in\mathbb{R}^{K^{0}(\mathscr{D})}$.
	Noting that
	$\langle\nabla_{\mathscr{D}}u,\nabla_{\mathscr{D}}v\rangle\in L^{1}(K(\mathscr{D}),\mu^{\mathscr{D}})$
	for any $u,v\in\mathcal{F}_{\mathscr{D}}$, we also define a bilinear form
	$\mathcal{E}^{\mathscr{D}}:\mathcal{F}_{\mathscr{D}}\times\mathcal{F}_{\mathscr{D}}\to\mathbb{R}$
	on $\mathcal{F}_{\mathscr{D}}$ by
	\begin{equation}\label{eq:AG-DF-identify-form}
	\begin{split}
	\mathcal{E}^{\mathscr{D}}(u,v)
		:=&\int_{K(\mathscr{D})}\langle\nabla_{\mathscr{D}}u,\nabla_{\mathscr{D}}v\rangle\,d\mu^{\mathscr{D}}\\
	=&\sum\nolimits_{C\in\mathscr{A}_{\mathscr{D}}}\int_{C}\langle\nabla_{C}(u|_{C}),\nabla_{C}(v|_{C})\rangle\rad(C)\,d\mathscr{H}^{1}_{C}.
	\end{split}
	\end{equation}
	In particular, setting $d\mu^{\mathscr{D}}_{\langle u\rangle}:=|\nabla_{\mathscr{D}}u|^{2}\,d\mu^{\mathscr{D}}$
	for each $u\in\mathcal{F}_{\mathscr{D}}$, we have
	$\mu^{\mathscr{D}}=\mu^{\mathscr{D}}_{\langle h_{1}|_{K(\mathscr{D})}\rangle}+\mu^{\mathscr{D}}_{\langle h_{2}|_{K(\mathscr{D})}\rangle}$
	as the counterpart of \eqref{eq:Kusuoka-meas} for $K(\mathscr{D})$.
\end{itemize}
\end{dfn}
\begin{thm}[{\cite[Theorem 5.18]{K:WeylAG}}]\label{thm:AG-DF-regular}
Let $\mathscr{D}\in\TDTo$ and set
$\mathcal{F}_{\mathscr{D},0}^{0}:=\{u\in\mathcal{F}_{\mathscr{D}}\mid u|_{V_{0}(\mathscr{D})}=0\}$.
Then $(\mathcal{E}^{\mathscr{D}},\mathcal{F}_{\mathscr{D}})$ is an irreducible, strongly local,
regular symmetric Dirichlet form on $L^{2}(K(\mathscr{D}),\mu^{\mathscr{D}})$ with
a core $\mathcal{C}^{\mathrm{lip}}_{\mathscr{D}}$, and
\begin{equation}\label{eq:AG-spectral-gap}
\int_{K(\mathscr{D})}u^{2}\,d\mu^{\mathscr{D}}
	\leq 40\kappa(\mathscr{D})^{-2}\mathcal{E}^{\mathscr{D}}(u,u)
	\qquad\textrm{for any $u\in\mathcal{F}_{\mathscr{D},0}^{0}$.}
\end{equation}
Moreover, the inclusion map
$\mathcal{F}_{\mathscr{D}}\hookrightarrow L^{2}(K(\mathscr{D}),\mu^{\mathscr{D}})$
is a compact linear operator under the norm
$\|u\|_{\mathcal{F}_{\mathscr{D}}}:=(\mathcal{E}^{\mathscr{D}}(u,u)+\int_{K(\mathscr{D})}u^{2}\,d\mu^{\mathscr{D}})^{1/2}$
on $\mathcal{F}_{\mathscr{D}}$.
\end{thm}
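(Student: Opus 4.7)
The plan is to verify, in order, the Dirichlet form axioms together with strong locality, irreducibility, and regularity with core $\mathcal{C}^{\mathrm{lip}}_{\mathscr{D}}$, then the Poincar\'{e}-type estimate \eqref{eq:AG-spectral-gap}, and finally the compactness of the embedding. I work arc by arc, using that $\mathscr{A}_{\mathscr{D}}$ is a countable family of circular arcs, that the restriction of $\mathcal{E}^{\mathscr{D}}$ to each $C\in\mathscr{A}_{\mathscr{D}}$ is a weighted one-dimensional Dirichlet form, and that continuity of $u\in\mathcal{F}_{\mathscr{D}}$ at the tangent points $\bigcup_{w\in W_{*}}V_{0}(\mathscr{D}_{w})$ glues these pieces together.

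Symmetry and bilinearity of $\mathcal{E}^{\mathscr{D}}$ are immediate from \eqref{eq:AG-DF-identify-form}; Markovianity follows because composition with a $1$-Lipschitz function fixing $0$ preserves absolute continuity on each arc and cannot increase $|\nabla_{C}u|$; and closedness reduces to completeness of each $W^{1,2}(C)$ together with the one-dimensional Sobolev embedding $W^{1,2}(C)\hookrightarrow\mathcal{C}(C)$, whose uniform continuity control on each arc preserves the tangency matching condition under $\|\cdot\|_{\mathcal{F}_{\mathscr{D}}}$-convergence. Strong locality is transparent from the gradient expression \eqref{eq:AG-DF-identify-form}, and irreducibility follows because $\mathcal{E}^{\mathscr{D}}(u,u)=0$ forces $u$ to be arc-wise constant, hence constant on the connected set $K(\mathscr{D})$. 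For regularity, $\mathcal{C}^{\mathrm{lip}}_{\mathscr{D}}$ is a sup-norm dense subalgebra of $\mathcal{C}(K(\mathscr{D}))$ by the Stone--Weierstrass theorem; for form-norm density I would truncate $u\in\mathcal{F}_{\mathscr{D}}$ to the finite subfamily $\bigcup_{|w|\leq N}\mathscr{A}_{\mathscr{D}_{w}}$, mollify the restriction to each retained arc, and interpolate in arclength on the removed inscribed circles so as to match the prescribed boundary values at tangent points, noting that the tail energy vanishes as $N\to\infty$ by the defining summability of $\mathcal{F}_{\mathscr{D}}$.

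The main obstacle is the Poincar\'{e} inequality \eqref{eq:AG-spectral-gap} with its explicit constant $40\kappa(\mathscr{D})^{-2}$. The plan is to exploit Proposition \ref{prop:circumscribed-inscribed}: since $K(\mathscr{D})\subset\overline{D_{\mathrm{cir}}(\mathscr{D})}$ and $\curv(D_{\mathrm{cir}}(\mathscr{D}))=\kappa$, every $C\in\mathscr{A}_{\mathscr{D}}$ has $\rad(C)\leq\kappa^{-1}$, so arclength at most $2\pi\kappa^{-1}$; a sharp one-dimensional Poincar\'{e} inequality on each $C$ then gives $\int_{C}u^{2}\rad(C)\,d\mathscr{H}^{1}_{C}\leq c\kappa^{-2}\int_{C}|\nabla_{C}u|^{2}\rad(C)\,d\mathscr{H}^{1}_{C}$ with an absolute constant $c$, whenever $u$ vanishes somewhere on $C$. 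For the three outer arcs $\overline{T(\mathscr{D})}\cap\partial D_{j}$ this is ensured directly by $u|_{V_{0}(\mathscr{D})}=0$. For an inscribed circle $\partial D_{\mathrm{in}}(\mathscr{D}_{w})$ I would propagate by writing $u=(u-u(p))+u(p)$ at a tangent point $p$ linking $\mathscr{D}_{w}$ to a neighbouring sub-triangle closer to $V_{0}(\mathscr{D})$, applying the arc-Poincar\'{e} inequality to the zero-at-$p$ part, and controlling $|u(p)|^{2}$ by the energy accumulated along a chain of arcs back to $V_{0}(\mathscr{D})$ via Cauchy--Schwarz. Careful bookkeeping of arc lengths, angles, and the nested chain structure should collapse the resulting sum to the numerical factor $40$.

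Compactness of $\mathcal{F}_{\mathscr{D}}\hookrightarrow L^{2}(K(\mathscr{D}),\mu^{\mathscr{D}})$ will then follow by a diagonal extraction: given a $\|\cdot\|_{\mathcal{F}_{\mathscr{D}}}$-bounded sequence $\{u_{n}\}$, Rellich--Kondrachov applied to each individual arc yields an $L^{2}(C,\mathscr{H}^{1}_{C})$-convergent subsequence, and the one-dimensional Sobolev embedding supplies a uniform modulus of continuity on that arc. Diagonalising over the countable family $\mathscr{A}_{\mathscr{D}}$ produces a single subsequence converging on every arc, after which \eqref{eq:AG-spectral-gap} applied to suitable differences controls the $L^{2}$-mass on the tail of arcs and upgrades the arc-wise convergence to genuine convergence in $L^{2}(K(\mathscr{D}),\mu^{\mathscr{D}})$.
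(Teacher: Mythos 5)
Your overall architecture (form axioms arc by arc, then the Poincar\'{e} bound, then compactness) is reasonable, but the route you take through the central estimate \eqref{eq:AG-spectral-gap} is genuinely different from the paper's and is where your argument has a real gap. The paper does \emph{not} chain along arcs: it builds, via Lemmas \ref{lem:circular-arc-extension} and \ref{lem:AG-extension}, a Lipschitz extension $\mathcal{I}_{\mathscr{D}}^{0}u$ of $u$ from $K(\mathscr{D})$ to the whole closed triangle $\overline{\triangle(\mathscr{D})}$ (cone-interpolating into each gap $D_{C}$ from its bounding arc $C$), with \emph{two-sided} comparability of both the energy and the $L^{2}$-norm against the planar Dirichlet integral and $\vol_{2}$-integral of the extension; the spectral gap then drops out of the classical rectangle inequality \eqref{eq:spectral-gap-rectangle} with $\delta=3\kappa(\mathscr{D})^{-1}$, the constant $40$ arising as a clean bound for $4\cdot\frac{9}{\pi^{2}}\cdot\frac{21}{2}$. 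Your one-dimensional chaining alternative faces exactly the difficulty this extension trick is designed to avoid: for a circle $\partial D_{\mathrm{in}}(\mathscr{D}_{w})$ deep in the gasket you must bound $|u(p)|^{2}\mu^{\mathscr{D}}(\partial D_{\mathrm{in}}(\mathscr{D}_{w}))$ by energy along a chain back to $V_{0}(\mathscr{D})$, and every arc near the top of the gasket is traversed by the chains of \emph{infinitely many} circles $w$. Whether the resulting double sum converges at all depends on a careful choice of Cauchy--Schwarz weights and on controlling the total $\mu^{\mathscr{D}}$-mass $\sum_{w}2\pi\rad(D_{\mathrm{in}}(\mathscr{D}_{w}))^{2}$ hanging off each arc; this is the entire content of the estimate and cannot be dismissed as bookkeeping, and there is no reason to expect it to produce the specific constant $40$. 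A smaller but genuine error in the same step: $\rad(C)\leq\kappa(\mathscr{D})^{-1}$ is \emph{false} for the three outer arcs $\overline{T(\mathscr{D})}\cap\partial D_{j}$, whose radii are $\curv(D_{j})^{-1}$ and can exceed $\kappa(\mathscr{D})^{-1}$ by an arbitrary factor; only the arclength bound survives (because the arc lies in $\overline{D_{\mathrm{cir}}(\mathscr{D})}$, which has curvature $\kappa$ by Proposition \ref{prop:circumscribed-inscribed}-(1)).

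Two further points. First, for compactness, applying \eqref{eq:AG-spectral-gap} ``to suitable differences'' of a bounded sequence gives nothing, since $\mathcal{E}^{\mathscr{D}}(u_{n}-u_{m},u_{n}-u_{m})$ need not be small; what is needed (and what the paper uses) is the \emph{localized} inequality on each cell $K(\mathscr{D}_{w})$ applied to $u$ minus a finite-rank interpolation of its values on $V_{m}(\mathscr{D})$, together with the fact that $\min_{w\in W_{m}}\kappa(\mathscr{D}_{w})\to\infty$ as $m\to\infty$ (visible from \eqref{eq:M1nM2nM3n}), so that the inclusion is an operator-norm limit of finite-rank operators. Your arc-wise Rellich plus diagonalization gives a.e.-type convergence on each arc but does not by itself upgrade to $L^{2}(K(\mathscr{D}),\mu^{\mathscr{D}})$-convergence without this quantitative tail control. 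Second, extending \eqref{eq:AG-spectral-gap} from Lipschitz functions to all of $\mathcal{F}_{\mathscr{D},0}^{0}$, and proving form-norm density of $\mathcal{C}^{\mathrm{lip}}_{\mathscr{D}}$, is done in the paper via the canonical approximation by piecewise linear functions tied to the discrete forms $\mathcal{E}^{\mathscr{D}}_{m}$ of Theorems \ref{thm:AG-Vm-Laplacians-exists}--\ref{thm:AG-DF-identify}; your truncate-mollify-interpolate scheme is plausible in spirit but would need to be checked against the same issue of energy leaking across the infinitely many tangent points.
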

\begin{thm}[{\cite[Theorem 5.23]{K:WeylAG}}]\label{thm:AG-DF-regular-unique}
Let $\mathscr{D}\in\TDTo$, let $\mu'$ be a finite Borel measure on $K(\mathscr{D})$
with $\mu'(U)>0$ for any non-empty open subset $U$ of $K(\mathscr{D})$, and let
$(\mathcal{E}',\mathcal{F}')$ be a strongly local, regular symmetric Dirichlet form on
$L^{2}(K(\mathscr{D}),\mu')$ with $\mathcal{E}'(u,u)>0$ for some $u\in\mathcal{F}'$.
Then the following two conditions are equivalent:
\begin{itemize}[label=\textup{(1)},align=left,leftmargin=*,topsep=0pt,parsep=0pt,itemsep=0pt]
\item[\textup{(1)}]Any $h\in\{h_{1}|_{K(\mathscr{D})},h_{2}|_{K(\mathscr{D})}\}$
	is in $\mathcal{F}'$ and is \emph{$\mathcal{E}'$-harmonic} on
	$K(\mathscr{D})\setminus V_{0}(\mathscr{D})$, i.e., $\mathcal{E}'(h,v)=0$ for any
	$v\in\mathcal{F}'\cap\mathcal{C}(K(\mathscr{D}))$ with $v|_{V_{0}(\mathscr{D})}=0$.
\item[\textup{(2)}]$\mathcal{F}'\cap\mathcal{C}(K(\mathscr{D}))=\mathcal{C}_{\mathscr{D}}$ and
	$\mathcal{E}'|_{\mathcal{C}_{\mathscr{D}}\times\mathcal{C}_{\mathscr{D}}}
		=c\mathcal{E}^{\mathscr{D}}|_{\mathcal{C}_{\mathscr{D}}\times\mathcal{C}_{\mathscr{D}}}$
	for some $c\in\mathbb{R}$.
\end{itemize}
\end{thm}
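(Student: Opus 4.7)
I would split the equivalence into two separate arguments. The forward direction (2) $\Rightarrow$ (1) is essentially a computation: since $h_j|_{K(\mathscr{D})}$ is $1$-Lipschitz on $\mathbb{C}$, it lies in $\mathcal{C}^{\mathrm{lip}}_\mathscr{D}\subset\mathcal{C}_\mathscr{D}$, hence in $\mathcal{F}'$ by (2). The identity $\mathcal{E}'=c\mathcal{E}^\mathscr{D}$ on $\mathcal{C}_\mathscr{D}$ reduces $\mathcal{E}'$-harmonicity to $\mathcal{E}^\mathscr{D}$-harmonicity, and I would verify the latter arc-by-arc: parameterizing each $C\in\mathscr{A}_\mathscr{D}$ by the angle about $\cent(C)$ and integrating by parts, the closed inscribed circles $\partial D_{\mathrm{in}}(\mathscr{D}_w)$ produce no boundary term while the three outer arcs contribute only at $V_0(\mathscr{D})$, where the test function $v$ vanishes. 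The remaining bulk terms, which can be rewritten as the pairing of $v$ against a specific signed measure on $K^0(\mathscr{D})$, cancel once the continuity of $v$ at the dense set of tangency points in each arc is exploited.

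The substantive direction (1) $\Rightarrow$ (2) is a uniqueness statement, and my plan is to use the Beurling--Deny representation of a strongly local symmetric regular Dirichlet form (see \cite{FOT,CF}) to recover $\mathcal{E}'$ from the energy measures $\mu'_{\langle u\rangle}$, combined with the chain rule that, for $u$ in the algebra generated by $h_1,h_2$, expresses $\mu'_{\langle u\rangle}$ in terms of the (mutual) energy measures of $h_1$ and $h_2$. The proof then has two pieces. The first is to \emph{localize} $\mu'_{\langle h_j\rangle}$ onto the arc skeleton $K^0(\mathscr{D})=\bigcup_{C\in\mathscr{A}_\mathscr{D}}C$, by a cutoff argument combining the harmonicity of $h_j$ with the spectral gap \eqref{eq:AG-spectral-gap} inherited by each sub-cell $K(\mathscr{D}_w)$. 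The second is to \emph{identify} $\mu'_{\langle h_j\rangle}|_C$ up to a single global constant $c$ by reducing to a one-dimensional uniqueness problem on each arc $C$ (where only multiples of the standard Sobolev form admit the restrictions $h_j|_C$ as harmonic functions away from the endpoints), and then propagating consistency of the constant across arcs by the self-conformal invariance of condition (1) under the maps $\Phi_w$, which relates the trace forms on each $V_0(\mathscr{D}_w)$ to that on $V_0(\mathscr{D})$ by a fixed renormalization ratio.

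The main obstacle is the localization step: the harmonicity hypothesis yields only global integral identities, whereas the residual set $K(\mathscr{D})\setminus K^0(\mathscr{D})$ is topologically complicated. Ruling out $\mathcal{E}'$-energy there requires constructing, for each $x$ in the residue, a family of cutoffs $\chi\in\mathcal{F}'\cap\mathcal{C}(K(\mathscr{D}))$ supported in shrinking neighborhoods of $x$ and with uniformly controlled energy, which in turn requires passing to arbitrarily small sub-cells $\mathscr{D}_w$ and exploiting the invariance of the hypotheses under the iteration $\Phi_w$. Once $\mu'_{\langle h_j\rangle}$ is confined to the arc skeleton, the one-dimensional identification and self-conformal bootstrap together yield $\mathcal{E}'|_{\mathcal{C}_\mathscr{D}\times\mathcal{C}_\mathscr{D}}=c\mathcal{E}^\mathscr{D}|_{\mathcal{C}_\mathscr{D}\times\mathcal{C}_\mathscr{D}}$, and the norm equivalence coming from \eqref{eq:AG-spectral-gap} forces $\mathcal{F}'\cap\mathcal{C}(K(\mathscr{D}))=\mathcal{C}_\mathscr{D}$.
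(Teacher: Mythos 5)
Your direction (2) $\Rightarrow$ (1) is fine in outline (the paper instead deduces it from Theorem \ref{thm:AG-DF-identify} together with Theorem \ref{thm:AG-Vm-Laplacians-exists}-(2),(3), i.e., from the discrete approximations; a Gauss--Green computation of the kind you describe is how the analogous Proposition \ref{prop:harmonic-RSC} is handled). The problem is your plan for (1) $\Rightarrow$ (2), which contains a genuine gap and, at one point, rests on a false premise. Your ``one-dimensional uniqueness problem on each arc'' presupposes that $h_j|_{C}$ is harmonic for a one-dimensional form living on the single arc $C$ away from its endpoints. It is not: for the form $\int_{C}\nabla_{C}u\cdot\nabla_{C}v\,\rad(C)\,d\mathscr{H}^{1}_{C}$ the harmonic functions are affine in arclength, whereas $\re$ and $\im$ restricted to a circular arc are trigonometric. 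The $\mathcal{E}^{\mathscr{D}}$-harmonicity of $h_j$ on $K(\mathscr{D})\setminus V_{0}(\mathscr{D})$ is an irreducibly global cancellation coupling \emph{all} the arcs through the (dense) set of tangency points, so there is no arc-by-arc reduction for your 1D uniqueness argument to bite on. Separately, your localization step --- showing $\mu'_{\langle h_j\rangle}$ charges only $K^{0}(\mathscr{D})$ --- needs cutoffs in $\mathcal{F}'$ with uniformly controlled $\mathcal{E}'$-energy near points of the residual set, i.e., capacity estimates for the \emph{unknown} form; the spectral gap \eqref{eq:AG-spectral-gap} you invoke is a property of $\mathcal{E}^{\mathscr{D}}$, not of $\mathcal{E}'$, and hypothesis (1) gives you no comparison between the two at that stage. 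Finally, even granting both steps, the energy measures of $h_{1},h_{2}$ determine $\mathcal{E}'$ only on the closure of the algebra they generate, and you have not shown that this exhausts $\mathcal{F}'\cap\mathcal{C}(K(\mathscr{D}))$.

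The paper's actual route avoids all of this by staying discrete: one defines the trace forms $\mathcal{E}'|_{V_{m}(\mathscr{D})}$ as in \eqref{eq:AG-DF-trace-Vm}, checks that hypothesis (1) together with strong locality and regularity forces them to satisfy conditions (1)--(3) of Theorem \ref{thm:AG-Vm-Laplacians-exists}, and then invokes the finite-dimensional uniqueness statement, Theorem \ref{thm:AG-Vm-Laplacians-unique} (a refinement of Meyers--Strichartz--Teplyaev), to get $\mathcal{E}'|_{V_{m}(\mathscr{D})}=c\,\mathcal{E}^{\mathscr{D}}_{m}$ for all $m$ simultaneously; condition (2) then follows from the inductive-limit identification of Theorem \ref{thm:AG-DF-identify}. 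If you want to salvage an energy-measure approach you would essentially have to reprove that discrete uniqueness theorem in disguise; I would recommend adopting the trace-form reduction instead.
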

\begin{rmk}\label{rmk:AG-DF-regular-unique}
In contrast to the case of $K(\mathscr{D})$ described in Definition \ref{dfn:AG-volume-meas-identify},
Theorems \ref{thm:AG-DF-regular} and \ref{thm:AG-DF-regular-unique}, the standard
Dirichlet form $(\mathcal{E},\mathcal{F})$ on the Sierpi\'{n}ski gasket $K$ satisfies
$\mu_{\langle u\rangle}(K^{0})=0$ for any $u\in\mathcal{F}$ by \cite[Lemma 8.26]{K:aghSG}
and \cite[Lemma 5.7]{Hino:index}, where $K^{0}$ denotes the union of the boundaries of
the equilateral triangles constituting $K$. In particular, $(\mathcal{E},\mathcal{F})$
cannot be expressed as the sum of any weighted one-dimensional Dirichlet forms
on $\Phi(K^{0})\subset K_{\mathcal{H}}$ similar to \eqref{eq:AG-DF-identify-form}.
The author does not have a good explanation of the reason for this difference, and it
would be very nice to give one. A naive guess could be that some sufficient smoothness
of the relevant curves might be required for the validity of an expression like
\eqref{eq:AG-DF-identify-form} of a non-zero strongly local regular symmetric Dirichlet
form satisfying the analog of Theorem \ref{thm:AG-DF-regular-unique}-(1);
indeed, the curves constituting $\Phi(K^{0})$ are $\mathcal{C}^{1}$ but not $\mathcal{C}^{2}$
by \cite[Theorem 5.4-(2)]{Kig:MRG}, whereas the corresponding curves $C\in\mathscr{A}_{\mathscr{D}}$
in $K(\mathscr{D})$ are circular arcs and therefore real analytic. While this guess itself might
well be correct, it would be still unclear how smooth the relevant curves should need to be.
\end{rmk}
The rest of this section is devoted to a brief sketch of the proof of 
Theorems \ref{thm:AG-DF-regular} and \ref{thm:AG-DF-regular-unique}, which is rather
long and occupies the whole of \cite[Sections 4 and 5]{K:WeylAG}. It starts with
identifying what the \emph{trace} $\mathcal{E}^{\mathscr{D}}|_{V_{m}(\mathscr{D})}$,
\begin{equation}\label{eq:AG-DF-trace-Vm}
\mathcal{E}^{\mathscr{D}}|_{V_{m}(\mathscr{D})}(u,u)
	:=\inf_{v\in\mathcal{F}_{\mathscr{D}},\,v|_{V_{m}(\mathscr{D})}=u}\mathcal{E}^{\mathscr{D}}(v,v),
	\quad u\in\mathbb{R}^{V_{m}(\mathscr{D})},
\end{equation}
of $(\mathcal{E}^{\mathscr{D}},\mathcal{F}_{\mathscr{D}})$ to
$V_{m}(\mathscr{D}):=\bigcup_{w\in W_{m}}V_{0}(\mathscr{D}_{w})$
\emph{should} be for any $m\in\mathbb{N}\cup\{0\}$. In view of the desired
properties of $(\mathcal{E}^{\mathscr{D}},\mathcal{F}_{\mathscr{D}})$
in Theorem \ref{thm:AG-DF-regular-unique}, the forms
$\{\mathcal{E}^{\mathscr{D}}|_{V_{m}(\mathscr{D})}\}_{m\in\mathbb{N}\cup\{0\}}$
should have the properties in the following theorem.
\begin{thm}[{\cite[Theorem 5.17]{Tep:energySG}}]\label{thm:AG-Vm-Laplacians-exists}
Let $\mathscr{D}\in\TDTo$. Then there exists
$\{\mathcal{E}^{\mathscr{D}}_{m}\}_{m\in\mathbb{N}\cup\{0\}}$
such that the following hold for any $m\in\mathbb{N}\cup\{0\}$:
\begin{itemize}[label=\textup{(1)},align=left,leftmargin=*,topsep=0pt,parsep=0pt,itemsep=0pt]
\item[\textup{(1)}]$\mathcal{E}^{\mathscr{D}}_{m}$ is a symmetric Dirichlet form on $\ell^{2}(V_{m}(\mathscr{D}))$.
	$\mathcal{E}^{\mathscr{D}}_{m}(\ind{x},\ind{y})=0=\mathcal{E}^{\mathscr{D}}_{m}(\ind{x},\ind{})$
	for any $x,y\in V_{m}(\mathscr{D})$ with $\{\tau\in W_{m}\mid x,y\in V_{0}(\mathscr{D}_{\tau})\}=\emptyset$.%
\item[\textup{(2)}]Both $h_{1}|_{V_{m}(\mathscr{D})}$ and $h_{2}|_{V_{m}(\mathscr{D})}$ are
	$\mathcal{E}^{\mathscr{D}}_{m}$-harmonic on $V_{m}(\mathscr{D})\setminus V_{0}(\mathscr{D})$.
\item[\textup{(3)}]$\mathcal{E}^{\mathscr{D}}_{m}(u,u)
		=\min_{v\in\mathbb{R}^{V_{m+1}(\mathscr{D})},\,v|_{V_{m}(\mathscr{D})}=u}\mathcal{E}^{\mathscr{D}}_{m+1}(v,v)$
	for any $u\in\mathbb{R}^{V_{m}(\mathscr{D})}$.
\item[\textup{(4)}]$\mathcal{E}^{\mathscr{D}}_{m}(h_{1}|_{V_{m}(\mathscr{D})},h_{1}|_{V_{m}(\mathscr{D})})
		+\mathcal{E}^{\mathscr{D}}_{m}(h_{2}|_{V_{m}(\mathscr{D})},h_{2}|_{V_{m}(\mathscr{D})})
		=2\vol_{2}(\triangle(\mathscr{D}))$.
\end{itemize}
\end{thm}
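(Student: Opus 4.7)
The plan is to reduce the construction of $\{\mathcal{E}^{\mathscr{D}}_m\}_{m}$ to that of a single base form $\mathcal{E}^{\mathscr{D}}_0$ on the three-point set $V_0(\mathscr{D})$, depending suitably on $\mathscr{D}\in\TDTo$. Parameterize
\[
\mathcal{E}^{\mathscr{D}}_0(u,u) = \sum_{j,k\in S,\,j<k} c_{jk}(\mathscr{D})\bigl(u(q_j(\mathscr{D})) - u(q_k(\mathscr{D}))\bigr)^2
\]
with conductances $c_{jk}(\mathscr{D})\geq 0$ to be determined, and for $m\geq 1$ define by cellwise summation
\[
\mathcal{E}^{\mathscr{D}}_m(u,u) := \sum_{w\in W_m}\mathcal{E}^{\mathscr{D}_w}_0\bigl(u|_{V_0(\mathscr{D}_w)}, u|_{V_0(\mathscr{D}_w)}\bigr)
\]
using $V_m(\mathscr{D}) = \bigcup_{w\in W_m}V_0(\mathscr{D}_w)$. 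Property (1) is immediate from this definition: each pair interaction is supported in a single cell, and two distinct cells at level $m$ share at most one vertex, in which case the non-interacting vertices have no common term.

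Next I would verify that (2), (3), (4) at any $m\geq 1$ reduce to their level-$0$ instances applied to the subcells $\mathscr{D}_\sigma$, $\sigma\in W_{m-1}$. Grouping the sum above by prefix gives the key identity $\mathcal{E}^{\mathscr{D}}_m(v,v) = \sum_{\sigma\in W_{m-1}}\mathcal{E}^{\mathscr{D}_\sigma}_1\bigl(v|_{V_1(\mathscr{D}_\sigma)},v|_{V_1(\mathscr{D}_\sigma)}\bigr)$, and the finite ramification of the tangency structure ensures that any vertex of $V_m(\mathscr{D})\setminus V_{m-1}(\mathscr{D})$ lies in $V_1(\mathscr{D}_\sigma)\setminus V_0(\mathscr{D}_\sigma)$ for a \emph{unique} $\sigma\in W_{m-1}$. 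Hence the minimization defining (3) at level $m$ decouples into $\#W_{m-1}$ independent problems on the subcells, so (3) propagates by induction from its $m=0$ instance. Then (2) propagates because the global coordinate functions $h_1, h_2$ restrict on each subcell to the analogous Euclidean coordinates (giving harmonicity at the new vertices of $V_m\setminus V_{m-1}$), while the trace identity provided by (3) transports harmonicity at old vertices of $V_{m-1}\setminus V_0$ from level $m-1$ to level $m$; finally (4) follows by combining (2), (3) with the level-$0$ base case.

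The crux of the proof is therefore to choose a family $\{c_{jk}(\mathscr{D})\}_{\mathscr{D}\in\TDTo}$ so that at level $0$: \textup{(a)} $\mathcal{E}^{\mathscr{D}}_0(u,u) = \min_{v\in\mathbb{R}^{V_1(\mathscr{D})},\,v|_{V_0(\mathscr{D})}=u}\mathcal{E}^{\mathscr{D}}_1(v,v)$ for all $u$; \textup{(b)} $h_1|_{V_1(\mathscr{D})}$ and $h_2|_{V_1(\mathscr{D})}$ are $\mathcal{E}^{\mathscr{D}}_1$-harmonic on $V_1(\mathscr{D})\setminus V_0(\mathscr{D})$; and \textup{(c)} $\sum_{i=1,2}\mathcal{E}^{\mathscr{D}}_0(h_i|_{V_0(\mathscr{D})}, h_i|_{V_0(\mathscr{D})}) = 2\vol_2(\triangle(\mathscr{D}))$. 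The six-vertex graph $V_1(\mathscr{D})$ carries three new tangency points on $\partial D_{\mathrm{in}}(\mathscr{D})$, each shared by exactly two of the three subcells $\mathscr{D}_1, \mathscr{D}_2, \mathscr{D}_3$, so unpacking (a)--(c) yields a coupled algebraic system linking $\{c_{jk}(\mathscr{D})\}$ with $\{c_{jk}(\mathscr{D}_i)\}_{i\in S}$ that must hold \emph{simultaneously} over all of $\TDTo$.

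The main obstacle is to exhibit a concrete solution of this nonlinear fixed-point system uniformly in $\mathscr{D}$. The strategy, as in \cite[Theorem 5.17]{Tep:energySG}, is to postulate $c_{jk}(\mathscr{D})$ as an explicit rational function of the curvatures $(\alpha, \beta, \gamma, \kappa(\mathscr{D}))$ with $(\alpha,\beta,\gamma) = (\curv(D_1), \curv(D_2), \curv(D_3))$, use the transformation rule \eqref{eq:curvatures-Mw} to re-express $c_{jk}(\mathscr{D}_i)$ in the same variables, and verify (a)--(c) directly. The harmonicity in (b) constrains the ratios of the three conductances linearly once the Euclidean coordinates of $V_1(\mathscr{D})$ are computed via Proposition \ref{prop:circumscribed-inscribed}; the normalization (c) fixes the overall scale; and the renormalization identity (a) then reduces to a polynomial identity in the curvatures that can be verified by an explicit (though intricate) computation exploiting the Möbius symmetry inherent in Definition \ref{dfn:TDT-Phi}.
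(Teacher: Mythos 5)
Your structural reduction is sound and is exactly the right skeleton: defining $\mathcal{E}^{\mathscr{D}}_{m}$ by cellwise summation over $W_{m}$ (which is precisely \eqref{eq:AG-Vm-Laplacians}), using the fact that distinct level-$m$ cells meet in at most one point of $V_{m}(\mathscr{D})$ so that the minimization in (3) decouples cell by cell, and propagating (2) via the standard argument that the trace identity transports harmonicity from $V_{m-1}(\mathscr{D})\setminus V_{0}(\mathscr{D})$ while the level-$1$ harmonicity on each subcell handles the new vertices. (Minor slip: the decoupling for property (3) at level $m$ runs over $\sigma\in W_{m}$, not $W_{m-1}$.)

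The genuine gap is that the crux you correctly isolate --- exhibiting conductances $c_{jk}(\mathscr{D})$ satisfying (a), (b), (c) simultaneously for every $\mathscr{D}\in\TDTo$ --- is only described as a strategy, never executed. All of the content of the theorem lives there: without producing the rational function of the curvatures and verifying the renormalization identity (a), existence is simply not established, and it is not a priori clear that the linear constraints from (b)--(c) on the nine subcell conductances are even consistent with a single shape-functional $c_{jk}(\cdot)$ applied to both $\mathscr{D}$ and its children. For the record, the answer is stated in the paper as \eqref{eq:AG-V0-Laplacians}: the conductance between $q_{j+1}(\mathscr{D})$ and $q_{j+2}(\mathscr{D})$ is $\bigl(\kappa(\mathscr{D})^{2}+\curv(D_{j})^{2}\bigr)/\bigl(2\kappa(\mathscr{D})\curv(D_{j})\bigr)$, and the compatibility computation you defer is nontrivial. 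Note also that this is not quite the route of the cited source: the paper remarks that Teplyaev's proof in \cite{Tep:energySG} is purely Euclidean-geometric and yields existence \emph{without} identifying the conductances; the explicit identification is the separate Theorem \ref{thm:AG-Vm-Laplacians-unique}, obtained via a refinement of \cite[Corollary 4.2]{MeyersStrTep}. So your plan, if completed, would prove existence and identification in one stroke --- but as written it assumes the solvability of the fixed-point system, which is the theorem.
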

Teplyaev's proof of Theorem \ref{thm:AG-Vm-Laplacians-exists} in \cite{Tep:energySG}
is purely Euclidean-geometric and provides no further information on
$\{\mathcal{E}^{\mathscr{D}}_{m}\}_{m\in\mathbb{N}\cup\{0\}}$.
The author has identified it as follows,
by applying a refinement of \cite[Corollary 4.2]{MeyersStrTep}.
\begin{thm}[{\cite[Theorem 4.18]{K:WeylAG}}]\label{thm:AG-Vm-Laplacians-unique}
For each $\mathscr{D}=(D_{1},D_{2},D_{3})\in\TDTo$, a sequence
$\{\mathcal{E}^{\mathscr{D}}_{m}\}_{m\in\mathbb{N}\cup\{0\}}$
as in Theorem \textup{\ref{thm:AG-Vm-Laplacians-exists}} is unique, and
\begin{equation}\label{eq:AG-V0-Laplacians}
\mathcal{E}^{\mathscr{D}}_{0}(u,u)
	=\sum_{j\in S}\frac{\kappa(\mathscr{D})^{2}+\curv(D_{j})^{2}}{2\kappa(\mathscr{D})\curv(D_{j})}\bigl(u(q_{j+1}(\mathscr{D}))-u(q_{j+2}(\mathscr{D}))\bigr)^{2}
\end{equation}
for any $u\in\mathbb{R}^{V_{0}(\mathscr{D})}$,
where $q_{j+3}(\mathscr{D}):=q_{j}(\mathscr{D})$ for $j\in S$.
Moreover, for any $\mathscr{D}\in\TDTo$, any $m\in\mathbb{N}\cup\{0\}$
and any $u\in\mathbb{R}^{V_{m}(\mathscr{D})}$,
\begin{equation}\label{eq:AG-Vm-Laplacians}
\mathcal{E}^{\mathscr{D}}_{m}(u,u)
	=\sum\nolimits_{w\in W_{m}}\mathcal{E}^{\mathscr{D}_{w}}_{0}(u|_{V_{0}(\mathscr{D}_{w})},u|_{V_{0}(\mathscr{D}_{w})}).
\end{equation}
\end{thm}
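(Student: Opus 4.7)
The plan is to prove the three assertions---uniqueness, the closed form \eqref{eq:AG-V0-Laplacians} for $\mathcal{E}^{\mathscr{D}}_{0}$, and the self-similar decomposition \eqref{eq:AG-Vm-Laplacians}---in three interconnected steps. The structural observation driving everything is that condition (1) of Theorem \ref{thm:AG-Vm-Laplacians-exists}, combined with $V_{m}(\mathscr{D})=\bigcup_{w\in W_{m}}V_{0}(\mathscr{D}_{w})$ and the fact that any two distinct sub-gaskets $\mathscr{D}_{w},\mathscr{D}_{w'}$ share at most one vertex, forces a canonical decomposition $\mathcal{E}^{\mathscr{D}}_{m}=\sum_{w\in W_{m}}F^{(m)}_{w}$ into symmetric Dirichlet forms $F^{(m)}_{w}$ on $\ell^{2}(V_{0}(\mathscr{D}_{w}))$, determined uniquely by setting $F^{(m)}_{w}(\ind{x},\ind{y}):=\mathcal{E}^{\mathscr{D}}_{m}(\ind{x},\ind{y})$ for distinct $x,y\in V_{0}(\mathscr{D}_{w})$ and extending to the diagonal via the Dirichlet-form constraint $\sum_{y}F^{(m)}_{w}(\ind{x},\ind{y})=0$.

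Next I would verify that for each fixed $w\in W_{m}$ the localized family $\{G^{(w)}_{n}\}_{n\geq 0}$ with $G^{(w)}_{n}:=\sum_{\tau\in W_{n}}F^{(m+n)}_{w\tau}$ inherits properties (1)-(4) of Theorem \ref{thm:AG-Vm-Laplacians-exists} with $\mathscr{D}$ replaced by $\mathscr{D}_{w}$. Sparsity (1) is immediate; harmonicity (2) uses locality, since at an interior vertex $x\in V_{n}(\mathscr{D}_{w})\setminus V_{0}(\mathscr{D}_{w})$ the $\mathcal{E}^{\mathscr{D}}_{m+n}$-neighborhoods of $x$ lie entirely within $\mathscr{D}_{w}$ so harmonicity transfers; compatibility (3) follows similarly via trace minimization; and normalization (4) transfers via the area identity $\sum_{j\in S}\vol_{2}(\triangle(\Phi_{j}(\mathscr{D})))=\vol_{2}(\triangle(\mathscr{D}))$ (dissection of $\triangle(\mathscr{D})$ at the center of $D_{\mathrm{in}}(\mathscr{D})$) and a double-sum rearrangement. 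Granting Step 3 below, the uniqueness of the level-$0$ form for $\mathscr{D}_{w}$ then forces $F^{(m)}_{w}=\mathcal{E}^{\mathscr{D}_{w}}_{0}$, establishing \eqref{eq:AG-Vm-Laplacians}.

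For the base case at $m=0$, I would parameterize $\mathcal{E}^{\mathscr{D}}_{0}$ by three unknown conductances $c_{j}^{\mathscr{D}}$ attached to $\{q_{j+1}(\mathscr{D}),q_{j+2}(\mathscr{D})\}$, $j\in S$, and invoke the cited refinement of \cite[Corollary 4.2]{MeyersStrTep}. Property (3) at $m=0$, combined with the $m=1$ instance of the decomposition from Step 1 (whose six nonzero entries per sub-gasket are nine unknowns total), identifies $\mathcal{E}^{\mathscr{D}}_{0}$ as the trace on $V_{0}(\mathscr{D})$ of this sum. Property (2) at $m=1$---harmonicity of $h_{1},h_{2}$ at the three tangency points of $D_{\mathrm{in}}(\mathscr{D})$ with $D_{1},D_{2},D_{3}$---supplies six linear equations; the refinement of \cite[Corollary 4.2]{MeyersStrTep} is the precise tool asserting that these together with the trace relation uniquely pin down the nine level-$1$ and three level-$0$ conductances up to a common scalar, which property (4) then fixes. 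Crucially, this argument treats levels $0$ and $1$ jointly and so does not invoke the as-yet-unproven uniqueness at $\mathscr{D}_{j}$, thereby avoiding circularity with Step 2.

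The main obstacle is the explicit Euclidean-geometric calculation verifying $c_{j}^{\mathscr{D}}=\frac{\kappa(\mathscr{D})^{2}+\curv(D_{j})^{2}}{2\kappa(\mathscr{D})\curv(D_{j})}$. This requires writing down Cartesian positions of $q_{j}(\mathscr{D})$ and the three new points $q_{j}(\Phi_{k}(\mathscr{D}))\in V_{1}(\mathscr{D})\setminus V_{0}(\mathscr{D})$ in terms of $(\curv(D_{1}),\curv(D_{2}),\curv(D_{3}),\kappa(\mathscr{D}))$, substituting into the harmonicity equations for $h_{1}$ and $h_{2}$, and simplifying using the Descartes-type identities implicit in Proposition \ref{prop:circumscribed-inscribed}(2). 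A potentially decisive simplification is to first normalize $\mathscr{D}$ by a suitable Möbius transformation (as in the proof of Lemma \ref{lem:AG-Hausdorff-dim}) to a symmetric configuration, perform the calculation there, and then transport back by checking Möbius-covariance of the conductances; the symmetric appearance of $c_{j}^{\mathscr{D}}$ in $\kappa$ and $\curv(D_{j})$ in \eqref{eq:AG-V0-Laplacians} strongly suggests such a covariant formulation exists.
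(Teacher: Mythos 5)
The survey does not actually reproduce a proof of this theorem --- it defers entirely to \cite[Theorem 4.18]{K:WeylAG} and records only that the argument rests on ``a refinement of \cite[Corollary 4.2]{MeyersStrTep}'' --- so your plan has to be judged on its own terms, and its load-bearing step fails. In Step 3 you claim that the six harmonicity equations at the three points of $V_{1}(\mathscr{D})\setminus V_{0}(\mathscr{D})$, together with the trace relation and the normalization, pin down the nine level-$1$ and three level-$0$ conductances up to a common scalar. A dimension count rules this out: harmonicity of $h_{1},h_{2}$ at an interior vertex is \emph{linear} in the conductances, so the six equations cut the $9$-dimensional space of level-$1$ conductance vectors down to a linear subspace of dimension at least $3$; the normalization (4) is one more affine condition; and the trace relation (3) does not constrain the level-$1$ conductances at all --- it merely \emph{defines} the three level-$0$ conductances as rational functions of them. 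Since the true solution \eqref{eq:AG-V0-Laplacians} has all conductances strictly positive, it is an interior point of the positivity cone, so there is a genuine family, of dimension at least $2$, of pairs $(\mathcal{E}^{\mathscr{D}}_{0},\mathcal{E}^{\mathscr{D}}_{1})$ satisfying \emph{every} condition of Theorem \ref{thm:AG-Vm-Laplacians-exists} that refers only to levels $0$ and $1$. The same count at every level shows that no finite truncation of the tower determines the forms up to scalar: the uniqueness is intrinsically a statement about the whole infinite compatible sequence, and whatever the refinement of \cite[Corollary 4.2]{MeyersStrTep} does, it must act across all levels at once (e.g.\ by showing that the infinitely iterated harmonicity-plus-trace constraints contract the remaining projective freedom to a point). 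Your architecture, designed precisely to ``avoid circularity'' by closing the argument at levels $0$--$1$, therefore cannot work as described.

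There is a second, smaller gap in Step 2: property (4) does not localize termwise. The global identity only gives $\sum_{w\in W_{m}}\bigl(F^{(m)}_{w}(h_{1})+F^{(m)}_{w}(h_{2})\bigr)=2\vol_{2}(\triangle(\mathscr{D}))=2\sum_{w\in W_{m}}\vol_{2}(\triangle(\mathscr{D}_{w}))$, an equality of sums, not of individual terms, so the localized family $\{G^{(w)}_{n}\}_{n}$ is a priori only a positive multiple $\theta_{w}$ of a correctly normalized one. The standard repair --- which your outline gestures at but never isolates --- is to use the (localized) uniqueness up to scalar to write $F^{(m)}_{w}=\theta_{w}\mathcal{E}^{\mathscr{D}_{w}}_{0}$, then exploit harmonicity of $(h_{1},h_{2})$ at each vertex shared by two adjacent $m$-cells: the flux of the embedding out of a single cell at such a vertex is nonzero, so the two cells' fluxes cancel only if their weights agree, and connectivity of the adjacency graph of $W_{m}$ propagates a single scalar, which the global (4) then fixes. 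Finally, note that the M\"{o}bius normalization you propose for the explicit computation of \eqref{eq:AG-V0-Laplacians} is delicate: harmonicity of the coordinate functions is a Euclidean, not a conformal, condition, so the conductances are covariant under similarities only, leaving a two-parameter family of shapes (the curvature ratios) over which the computation must still be carried out.
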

Let $\mathscr{D}\in\TDTo$. Theorem \ref{thm:AG-Vm-Laplacians-exists}-(3) allows us
to apply to $\{\mathcal{E}^{\mathscr{D}}_{m}\}_{m\in\mathbb{N}\cup\{0\}}$
the general theory from \cite[Chapter 2]{Kig:AOF} of constructing a Dirichlet form
by taking the \emph{``inductive limit''} of Dirichlet forms on finite sets. Namely,
setting $V_{*}(\mathscr{D}):=\bigcup_{m\in\mathbb{N}\cup\{0\}}V_{m}(\mathscr{D})$,
we can define a linear subspace $\mathcal{F}'_{\mathscr{D}}$ of
$\mathbb{R}^{V_{*}(\mathscr{D})}$ and a bilinear form
$\mathcal{E}'^{\mathscr{D}}:\mathcal{F}'_{\mathscr{D}}\times\mathcal{F}'_{\mathscr{D}}\to\mathbb{R}$
on $\mathcal{F}'_{\mathscr{D}}$ by
\begin{gather}\label{eq:DF-AG-domain}
\mathcal{F}'_{\mathscr{D}}:=\bigl\{u\in\mathbb{R}^{V_{*}(\mathscr{D})}\bigm|\lim\nolimits_{m\to\infty}\mathcal{E}^{\mathscr{D}}_{m}(u|_{V_{m}(\mathscr{D})},u|_{V_{m}(\mathscr{D})})<+\infty\bigr\},\\
\mathcal{E}'^{\mathscr{D}}(u,v):=\lim\nolimits_{m\to\infty}\mathcal{E}^{\mathscr{D}}_{m}(u|_{V_{m}(\mathscr{D})},v|_{V_{m}(\mathscr{D})})\in\mathbb{R},
	\quad u,v\in\mathcal{F}'_{\mathscr{D}}.
\label{eq:DF-AG-form}
\end{gather}

The next step of the proof of Theorems \ref{thm:AG-DF-regular} and \ref{thm:AG-DF-regular-unique}
is the following identification of $(\mathcal{E}'^{\mathscr{D}},\mathcal{F}'_{\mathscr{D}})$ as
$(\mathcal{E}^{\mathscr{D}},\mathcal{F}_{\mathscr{D}})$, i.e., as given by
\eqref{eq:AG-DF-identify-domain} and \eqref{eq:AG-DF-identify-form}.
\begin{thm}[{\cite[Theorem 5.13]{K:WeylAG}}]\label{thm:AG-DF-identify}
Let $\mathscr{D}\in\TDTo$. Then
$\mathcal{F}'_{\mathscr{D}}=\{u|_{V_{*}(\mathscr{D})}\mid u\in\mathcal{F}_{\mathscr{D}}\}$,
the mapping $\mathcal{F}_{\mathscr{D}}\ni u\mapsto u|_{V_{*}(\mathscr{D})}\in\mathcal{F}'_{\mathscr{D}}$
is a linear isomorphism, and
$\mathcal{E}'^{\mathscr{D}}(u|_{V_{*}(\mathscr{D})},v|_{V_{*}(\mathscr{D})})
	=\mathcal{E}^{\mathscr{D}}(u,v)$
for any $u,v\in\mathcal{F}_{\mathscr{D}}$.
\end{thm}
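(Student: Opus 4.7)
The strategy is to identify each of the trace forms
\[
\mathcal{E}^{\mathscr{D}}|_{V_{m}(\mathscr{D})}(u,u):=\inf\{\mathcal{E}^{\mathscr{D}}(v,v)\mid v\in\mathcal{F}_{\mathscr{D}},\,v|_{V_{m}(\mathscr{D})}=u\},\quad u\in\mathbb{R}^{V_{m}(\mathscr{D})},
\]
of $(\mathcal{E}^{\mathscr{D}},\mathcal{F}_{\mathscr{D}})$ with the $\mathcal{E}^{\mathscr{D}}_{m}$ of Theorem \ref{thm:AG-Vm-Laplacians-unique} and to derive the theorem from that identification. For each $u$ the infimum is attained by a unique cellwise $\mathcal{E}^{\mathscr{D}}$-harmonic minimizer $H_{m}u\in\mathcal{F}_{\mathscr{D}}$ characterized by $H_{m}u|_{V_{m}(\mathscr{D})}=u$ and $\mathcal{E}^{\mathscr{D}}(H_{m}u,\varphi)=0$ for every $\varphi\in\mathcal{F}_{\mathscr{D}}$ with $\varphi|_{V_{m}(\mathscr{D})}=0$. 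I would then verify that $\{\mathcal{E}^{\mathscr{D}}|_{V_{m}(\mathscr{D})}\}_{m}$ satisfies all four conditions of Theorem \ref{thm:AG-Vm-Laplacians-exists}, so that Theorem \ref{thm:AG-Vm-Laplacians-unique} forces $\mathcal{E}^{\mathscr{D}}|_{V_{m}(\mathscr{D})}=\mathcal{E}^{\mathscr{D}}_{m}$ for every $m$. Conditions (1) and (3) follow from the fact that the minimization decouples cell-by-cell over $\{K(\mathscr{D}_{w})\}_{w\in W_{m}}$; condition (4) follows from Definition \ref{dfn:AG-volume-meas-identify}, which gives $\mu^{\mathscr{D}}=\mu^{\mathscr{D}}_{\langle h_{1}|_{K(\mathscr{D})}\rangle}+\mu^{\mathscr{D}}_{\langle h_{2}|_{K(\mathscr{D})}\rangle}$ and $\mu^{\mathscr{D}}(K(\mathscr{D}))=2\vol_{2}(\triangle(\mathscr{D}))$; and condition (2)---the harmonicity of $h_{j}|_{V_{m}(\mathscr{D})}$ off $V_{0}(\mathscr{D})$---is the most delicate and must exploit the coupling of admissible test functions across arcs at tangency points so that the nonvanishing arcwise ``Laplacian'' of $h_{j}$ on each circle pairs to zero against $H_{m}\varphi$ when $\varphi|_{V_{0}(\mathscr{D})}=0$.

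Once the identification $\mathcal{E}^{\mathscr{D}}|_{V_{m}(\mathscr{D})}=\mathcal{E}^{\mathscr{D}}_{m}$ is in hand, the forward direction is essentially automatic: for $u\in\mathcal{F}_{\mathscr{D}}$, plugging $v=u$ into the trace infimum gives $\mathcal{E}^{\mathscr{D}}_{m}(u|_{V_{m}(\mathscr{D})},u|_{V_{m}(\mathscr{D})})\leq\mathcal{E}^{\mathscr{D}}(u,u)$, and condition (3) makes this sequence monotone nondecreasing, so $u|_{V_{*}(\mathscr{D})}\in\mathcal{F}'_{\mathscr{D}}$ with $\mathcal{E}'^{\mathscr{D}}(u|_{V_{*}(\mathscr{D})},u|_{V_{*}(\mathscr{D})})\leq\mathcal{E}^{\mathscr{D}}(u,u)$. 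Injectivity of the restriction $\mathcal{F}_{\mathscr{D}}\ni u\mapsto u|_{V_{*}(\mathscr{D})}$ follows from the density of $V_{*}(\mathscr{D})\cap C$ in every $C\in\mathscr{A}_{\mathscr{D}}$---descendant inscribed disks tangent to $C$ produce tangency points accumulating everywhere along $C$---together with the continuity of $u|_{C}\in W^{1,2}(C)$.

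For the reverse direction, given $u\in\mathcal{F}'_{\mathscr{D}}$, set $u_{m}:=H_{m}(u|_{V_{m}(\mathscr{D})})\in\mathcal{F}_{\mathscr{D}}$, so that $\mathcal{E}^{\mathscr{D}}(u_{m},u_{m})=\mathcal{E}^{\mathscr{D}}_{m}(u|_{V_{m}(\mathscr{D})},u|_{V_{m}(\mathscr{D})})\nearrow\mathcal{E}'^{\mathscr{D}}(u,u)<+\infty$. The $\mathcal{E}^{\mathscr{D}}$-orthogonality of $u_{m}$ to every $\mathcal{F}_{\mathscr{D}}$-function vanishing on $V_{m}(\mathscr{D})$, applied to $u_{m+k}-u_{m}$, yields the Pythagorean identity
\[
\mathcal{E}^{\mathscr{D}}(u_{m+k}-u_{m},u_{m+k}-u_{m})=\mathcal{E}^{\mathscr{D}}(u_{m+k},u_{m+k})-\mathcal{E}^{\mathscr{D}}(u_{m},u_{m})\longrightarrow 0,
\]
so $\{u_{m}\}_{m}$ is $\mathcal{E}^{\mathscr{D}}$-Cauchy. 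I would then upgrade this to Cauchy-ness in the full norm $\|\cdot\|_{\mathcal{F}_{\mathscr{D}}}$ by an arc-by-arc Sobolev argument, after normalizing each $u_{m}$ so as to vanish at a fixed point of $V_{0}(\mathscr{D})$, and pass to the limit $\tilde u\in\mathcal{F}_{\mathscr{D}}$. Pointwise convergence on each finite $V_{m}(\mathscr{D})$ gives $\tilde u|_{V_{*}(\mathscr{D})}=u$, and continuity of $\mathcal{E}^{\mathscr{D}}$ along $\mathcal{F}_{\mathscr{D}}$-convergence gives $\mathcal{E}^{\mathscr{D}}(\tilde u,\tilde u)=\mathcal{E}'^{\mathscr{D}}(u,u)$; applying this surjection to $u|_{V_{*}(\mathscr{D})}$ for an arbitrary $u\in\mathcal{F}_{\mathscr{D}}$ and combining with injectivity upgrades the forward inequality to the desired equality.

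The main obstacle I expect is twofold. The first is condition (2) of the trace identification, which amounts to asserting that the $\mathcal{E}^{\mathscr{D}}$-harmonic extension of $h_{j}|_{V_{m}(\mathscr{D})}$ is $\mathcal{E}^{\mathscr{D}}$-orthogonal to $H_{m}\varphi$ for all $\varphi|_{V_{0}(\mathscr{D})}=0$, and seems to rest on careful use of the one-sided cancellations at each tangency point coming from external tangency of the two adjacent circles. The second is upgrading $\mathcal{E}^{\mathscr{D}}$-Cauchy-ness of $\{u_{m}\}$ to Cauchy-ness in $\|\cdot\|_{\mathcal{F}_{\mathscr{D}}}$ without recourse to the spectral-gap estimate \eqref{eq:AG-spectral-gap}, which belongs to the subsequent Theorem \ref{thm:AG-DF-regular}; the plan is to use the one-dimensional Sobolev embedding $W^{1,2}(C)\hookrightarrow\mathcal{C}(C)$ on each arc, together with the finiteness $\sum_{C\in\mathscr{A}_{\mathscr{D}}}\rad(C)\mathscr{H}^{1}_{C}(C)=\mu^{\mathscr{D}}(K(\mathscr{D}))<+\infty$, to propagate a pointwise normalization at one vertex of $V_{0}(\mathscr{D})$ into a uniform $L^{\infty}$-control of $u_{m+k}-u_{m}$ throughout $K(\mathscr{D})$.
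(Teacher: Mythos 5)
Your architecture is genuinely different from the paper's, and while it is workable in outline, its two load-bearing steps are respectively missing and broken as stated. The paper does not identify the traces $\mathcal{E}^{\mathscr{D}}|_{V_{m}(\mathscr{D})}$ with $\mathcal{E}^{\mathscr{D}}_{m}$ by verifying the hypotheses of Theorem \ref{thm:AG-Vm-Laplacians-exists} (that is how it later proves the implication from (1) to (2) in Theorem \ref{thm:AG-DF-regular-unique}); instead it identifies the energy measure of the inductive-limit form $\mathcal{E}'^{\mathscr{D}}$ with $\mu^{\mathscr{D}}$ cell by cell via Theorem \ref{thm:AG-Vm-Laplacians-exists}-(4), verifies $\mathcal{E}'^{\mathscr{D}}=\mathcal{E}^{\mathscr{D}}$ on piecewise linear (cellwise affine) functions by directly comparing \eqref{eq:AG-V0-Laplacians} with \eqref{eq:AG-DF-identify-form}, passes to general functions by the canonical piecewise linear approximation, and gets the forward inclusion from the cell-by-cell bound $\mathcal{E}^{\mathscr{D}_{w}}_{0}(u|_{V_{0}(\mathscr{D}_{w})},u|_{V_{0}(\mathscr{D}_{w})})\leq 7\int_{K(\mathscr{D}_{w})}|\nabla_{\mathscr{D}}u|^{2}\,d\mu^{\mathscr{D}}$. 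The advantage of that route is that the only harmonicity it needs is the already-established discrete one of Theorem \ref{thm:AG-Vm-Laplacians-exists}-(2). Your route instead makes the $\mathcal{E}^{\mathscr{D}}$-harmonicity of $h_{1},h_{2}$ on $K(\mathscr{D})\setminus V_{0}(\mathscr{D})$ the pivotal input for condition (2) of your trace identification, and you only gesture at it. This is not a routine verification: one must sum a Gauss--Green identity over the infinitely many circles $\partial D_{\mathrm{in}}(\mathscr{D}_{w})$, control the boundary contributions at the tangency points, and extend the test function $v$ (defined only on $K^{0}(\mathscr{D})$) into the complementary disks so that the divergence theorem applies --- essentially the content of Lemma \ref{lem:AG-extension} here and of Proposition \ref{prop:harmonic-RSC} in the carpet setting. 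In the paper this harmonicity is a \emph{consequence} of Theorem \ref{thm:AG-DF-identify} (via Theorem \ref{thm:AG-DF-regular-unique}, (2)$\Rightarrow$(1)), so you must supply an independent proof of it before your argument gets off the ground.

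The second problem is in your reverse direction. Upgrading $\mathcal{E}^{\mathscr{D}}$-Cauchyness of $u_{m}=H_{m}(u|_{V_{m}(\mathscr{D})})$ to $\|\cdot\|_{\mathcal{F}_{\mathscr{D}}}$-Cauchyness by an ``arc-by-arc Sobolev argument'' fails: the one-dimensional bound $|v(x)-v(y)|\leq\bigl(\mathscr{H}^{1}_{C}(C)/\rad(C)\bigr)^{1/2}\bigl(\int_{C}|\nabla_{C}v|^{2}\rad(C)\,d\mathscr{H}^{1}_{C}\bigr)^{1/2}$ must be chained from $V_{0}(\mathscr{D})$ through a sequence of arcs to reach a deep cell, and Cauchy--Schwarz then produces a factor growing like the square root of the combinatorial depth, which is unbounded over $\mathscr{A}_{\mathscr{D}}$; no uniform $L^{\infty}$ or $L^{2}(\mu^{\mathscr{D}})$ control in terms of $\mathcal{E}^{\mathscr{D}}$ alone comes out this way. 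The paper obtains the $L^{2}$ estimate \eqref{eq:AG-spectral-gap} only through the genuinely two-dimensional extension of Lemma \ref{lem:AG-extension} combined with the planar Poincar\'{e} inequality \eqref{eq:spectral-gap-rectangle}. Fortunately this step is unnecessary for Theorem \ref{thm:AG-DF-identify}: membership in $\mathcal{F}_{\mathscr{D}}$ as defined in \eqref{eq:AG-DF-identify-domain} does not require membership in $L^{2}(K(\mathscr{D}),\mu^{\mathscr{D}})$. Energy-Cauchyness already yields, on each \emph{fixed} arc $C$ (reachable from $V_{0}(\mathscr{D})$ by a finite chain), uniform convergence of $u_{m}|_{C}$ and $L^{2}(C)$-convergence of the gradients, so the limit $\tilde u$ is arcwise $W^{1,2}$ with $\tilde u|_{V_{*}(\mathscr{D})}=u$; Fatou's lemma over $\mathscr{A}_{\mathscr{D}}$ gives $\mathcal{E}^{\mathscr{D}}(\tilde u,\tilde u)\leq\lim_{m}\mathcal{E}^{\mathscr{D}}_{m}(u|_{V_{m}(\mathscr{D})},u|_{V_{m}(\mathscr{D})})$, and the trace inequality gives the reverse bound. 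Reroute that step accordingly, and concentrate your effort on the harmonicity identity, which is where the real work lies.
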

\begin{proof}[Sketch of the proof]
By Theorem \ref{thm:AG-Vm-Laplacians-exists}-(2),(3) and \eqref{eq:DF-AG-domain} we have
$h_{1}|_{V_{*}(\mathscr{D})},h_{2}|_{V_{*}(\mathscr{D})}\in\mathcal{F}'_{\mathscr{D}}$,
which together with \eqref{eq:DF-AG-domain} implies that
$\mathcal{C}'_{\mathscr{D}}:=\{u\in\mathcal{C}(K(\mathscr{D}))\mid u|_{V_{*}(\mathscr{D})}\in\mathcal{F}'_{\mathscr{D}}\}$
is a dense subalgebra of $(\mathcal{C}(K(\mathscr{D})),\|\mspace{-1.35mu}\cdot\mspace{-1.35mu}\|_{\sup})$ with
$h_{1}|_{K(\mathscr{D})},h_{2}|_{K(\mathscr{D})}\in\mathcal{C}^{\mathrm{lip}}_{\mathscr{D}}\subset\mathcal{C}'_{\mathscr{D}}$.
Hence at this stage we can already define the $\mathcal{E}'^{\mathscr{D}}$-energy measure
$\mu'^{\mathscr{D}}_{\langle u\rangle}$ of $u\in\mathcal{C}'_{\mathscr{D}}$ by
\eqref{eq:energy-meas} with $K(\mathscr{D}),\mathcal{E}'^{\mathscr{D}},\mathcal{C}'_{\mathscr{D}}$
in place of $K,\mathcal{E},\mathcal{F}$, and the analog of \eqref{eq:Kusuoka-meas} by
$\mu'^{\mathscr{D}}:=\mu'^{\mathscr{D}}_{\langle h_{1}|_{K(\mathscr{D})}\rangle}+\mu'^{\mathscr{D}}_{\langle h_{2}|_{K(\mathscr{D})}\rangle}$.
Then it follows from Theorem \ref{thm:AG-Vm-Laplacians-exists}-(4) and \eqref{eq:AG-Vm-Laplacians} that
$\mu'^{\mathscr{D}}(K(\mathscr{D}_{w}))=2\vol_{2}(\triangle(\mathscr{D}_{w}))=\mu^{\mathscr{D}}(K(\mathscr{D}_{w}))$
for any $w\in W_{*}$, whence $\mu'^{\mathscr{D}}=\mu^{\mathscr{D}}$.

Now that $\mu'^{\mathscr{D}}$ has been identified as $\mu^{\mathscr{D}}$ given by
\eqref{eq:AG-volume-meas-identify}, it is natural to guess%
\footnote{This is how the author first came up with the expressions
\eqref{eq:AG-DF-identify-domain} and \eqref{eq:AG-DF-identify-form}.}
that $\mathcal{F}'_{\mathscr{D}}\subset\{u|_{V_{*}(\mathscr{D})}\mid u\in\mathcal{F}_{\mathscr{D}}\}$ and that
$\mathcal{E}'^{\mathscr{D}}(u|_{V_{*}(\mathscr{D})},u|_{V_{*}(\mathscr{D})})=\mathcal{E}^{\mathscr{D}}(u,u)$
for any $u\in\mathcal{F}_{\mathscr{D}}$ with $u|_{V_{*}(\mathscr{D})}\in\mathcal{F}'_{\mathscr{D}}$.
This guess is not difficult to verify, first for any piecewise linear $u\in\mathcal{F}_{\mathscr{D}}$
by direct calculations based on Theorem \ref{thm:AG-Vm-Laplacians-exists}-(2), \eqref{eq:AG-V0-Laplacians},
\eqref{eq:AG-Vm-Laplacians} and \eqref{eq:DF-AG-form}, and then for any $u\in\mathcal{F}_{\mathscr{D}}$
with $u|_{V_{*}(\mathscr{D})}\in\mathcal{F}'_{\mathscr{D}}$ by using the canonical
approximation of $u$ by piecewise linear functions; here $u\in\mathcal{F}_{\mathscr{D}}$
is called \emph{$m$-piecewise linear}, where $m\in\mathbb{N}\cup\{0\}$,
if and only if $u|_{K^{0}(\mathscr{D}_{w})}$ is a linear combination of
$h_{1}|_{K^{0}(\mathscr{D}_{w})},h_{2}|_{K^{0}(\mathscr{D}_{w})},\ind{K^{0}(\mathscr{D}_{w})}$
for any $w\in W_{m}$, and \emph{piecewise linear} if and only if
$u$ is $m$-piecewise linear for some $m\in\mathbb{N}\cup\{0\}$.

Finally, for any $u\in\mathcal{F}_{\mathscr{D}}$, some direct calculations using
\eqref{eq:AG-V0-Laplacians}, \eqref{eq:AG-DF-identify-form} and \eqref{eq:AG-arcs-Kw} show that
$\mathcal{E}^{\mathscr{D}_{w}}_{0}(u|_{V_{0}(\mathscr{D}_{w})},u|_{V_{0}(\mathscr{D}_{w})})
	\leq 7\int_{K(\mathscr{D}_{w})}|\nabla_{\mathscr{D}}u|^{2}\,d\mu^{\mathscr{D}}$
for any $w\in W_{*}$, which together with \eqref{eq:AG-Vm-Laplacians} yields
$\mathcal{E}^{\mathscr{D}}_{m}(u|_{V_{m}(\mathscr{D})},u|_{V_{m}(\mathscr{D})})
	\leq\mathcal{E}^{\mathscr{D}}(u,u)$
for any $m\in\mathbb{N}\cup\{0\}$, whence
$u|_{V_{*}(\mathscr{D})}\in\mathcal{F}'_{\mathscr{D}}$ by \eqref{eq:DF-AG-domain}.
\end{proof}

The last main step of the proof of Theorem \ref{thm:AG-DF-regular} is to prove
\eqref{eq:AG-spectral-gap}, which is based mainly on \eqref{eq:AG-volume-meas-identify},
\eqref{eq:AG-DF-identify-form} and the following lemma.
\begin{lem}[{\cite[Lemma 5.19]{K:WeylAG}}]\label{lem:circular-arc-extension}
Let $C\subset\mathbb{C}$ be a circular arc, let $u\in\mathbb{R}^{C}$ satisfy $\lip_{C}u<+\infty$,
and for $a\in\mathbb{R}$ define $\mathcal{I}_{C}^{a}u:\overline{D_{C}}\to\mathbb{R}$ by
\begin{equation}\label{eq:circular-arc-extension}
\mathcal{I}_{C}^{a}u((1-t)\cent(C)+tz):=(1-t)a+tu(z),\quad(t,z)\in[0,1]\times C.
\end{equation}
Then for any $a\in[\min_{C}u,\max_{C}u]$,
$\lip_{\overline{D_{C}}}\mathcal{I}_{C}^{a}u\leq\sqrt{5}\lip_{C}u$ and
\begin{equation}\label{eq:circular-arc-W12}
\frac{2}{21}\int_{D_{C}}\mspace{-3mu}|\nabla\mathcal{I}_{C}^{a}u|^{2}\,d\vol_{2}
	\leq\int_{C}\mspace{-2mu}|\nabla_{C}u|^{2}\rad(C)\,d\mathscr{H}^{1}_{C}
	\leq 2\int_{D_{C}}\mspace{-3mu}|\nabla\mathcal{I}_{C}^{a}u|^{2}\,d\vol_{2}.
\end{equation}
Further, with $\overline{u}^{C}:=\mathscr{H}^{1}_{C}(C)^{-1}\int_{C}u\,d\mathscr{H}^{1}_{C}$,
for any $a\in\{0,\overline{u}^{C}\}$,
\begin{equation}\label{eq:circular-arc-L2}
2\int_{D_{C}}|\mathcal{I}_{C}^{a}u|^{2}\,d\vol_{2}
	\leq\int_{C}u^{2}\rad(C)\,d\mathscr{H}^{1}_{C}
	\leq 4\int_{D_{C}}|\mathcal{I}_{C}^{a}u|^{2}\,d\vol_{2}.
\end{equation}
\end{lem}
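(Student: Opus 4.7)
The plan is to normalize the setup to polar coordinates and reduce all three estimates to one-dimensional inequalities for the boundary function. After a rigid motion, we may assume $\cent(C)=0$ and $C=\{re^{i\theta}\mid\theta\in[\alpha,\beta]\}$, and we set $v(\theta):=u(re^{i\theta})$ and $f(s,\theta):=\mathcal{I}_{C}^{a}u(se^{i\theta})=(1-s/r)a+(s/r)v(\theta)$ for $(s,\theta)\in[0,r]\times[\alpha,\beta]$. A short computation in polar coordinates yields
\[
|\nabla f|^{2}=(\partial_{s}f)^{2}+(s^{-1}\partial_{\theta}f)^{2}=\frac{(v(\theta)-a)^{2}+v'(\theta)^{2}}{r^{2}},
\]
so that $\int_{D_{C}}|\nabla f|^{2}\,d\vol_{2}=\tfrac{1}{2}\int_{\alpha}^{\beta}\bigl((v-a)^{2}+v'^{2}\bigr)\,d\theta$, whereas the arc-side integral reduces to $\int_{C}|\nabla_{C}u|^{2}\rad(C)\,d\mathscr{H}^{1}_{C}=\int_{\alpha}^{\beta}v'^{2}\,d\theta$. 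A similar calculation for the $L^{2}$ integrals gives $\int_{D_{C}}|\mathcal{I}_{C}^{a}u|^{2}\,d\vol_{2}=\tfrac{r^{2}}{4}\bigl[\int_{\alpha}^{\beta}v^{2}\,d\theta+(\beta-\alpha)\overline{v}^{2}-(\beta-\alpha)\overline{v}^{2}+\tfrac{1}{3}(\beta-\alpha)(\bar v-a)^{2}\bigr]$ (a standard expansion of $[(1-\tau)a+\tau v]^{2}$), while $\int_{C}u^{2}\rad(C)\,d\mathscr{H}^{1}_{C}=r^{2}\int_{\alpha}^{\beta}v^{2}\,d\theta$.

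Given these explicit forms, the upper bounds in \eqref{eq:circular-arc-W12} and \eqref{eq:circular-arc-L2} (the factors of $2$ and $4$ respectively) follow immediately by discarding the nonnegative extra term $(v-a)^{2}$ or $(\beta-\alpha)\overline{v}^{2}$. For the lower bound in \eqref{eq:circular-arc-L2} with $a\in\{0,\overline{u}^{C}\}$, one checks directly that the contribution of the $a^{2}$ and cross terms is nonnegative (Jensen's/Cauchy--Schwarz inequality applied to $\overline{v}^{2}\leq(\beta-\alpha)^{-1}\int v^{2}$ gives exactly the required factor of $2$).

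The main work is the lower bound $\tfrac{2}{21}\int_{D_{C}}|\nabla f|^{2}\,d\vol_{2}\le\int_{C}|\nabla_{C}u|^{2}\rad(C)\,d\mathscr{H}^{1}_{C}$, which reduces to the Poincar\'{e}-type estimate $\int_{\alpha}^{\beta}(v-a)^{2}\,d\theta\le 20\int_{\alpha}^{\beta}v'^{2}\,d\theta$. Because $a\in[\min_{C}u,\max_{C}u]$ there is, by the intermediate value theorem, some $\theta_{0}\in[\alpha,\beta]$ with $v(\theta_{0})=a$, so $(v(\theta)-a)^{2}=(\int_{\theta_{0}}^{\theta}v'\,ds)^{2}\leq|\theta-\theta_{0}|\int_{\alpha}^{\beta}v'^{2}\,ds$ by Cauchy--Schwarz, and integrating in $\theta$ yields the factor $\tfrac{1}{2}[(\theta_{0}-\alpha)^{2}+(\beta-\theta_{0})^{2}]\le\tfrac{1}{2}(\beta-\alpha)^{2}\le 2\pi^{2}<20$, where $\beta-\alpha\le 2\pi$ since $C$ is an arc of a circle.

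For the Lipschitz bound, the key pointwise estimate uses the diameter of the circle: since $v(\theta_{0})=a$ for some $\theta_{0}$, one has $|v(\theta)-a|\leq \lip_{C}u\cdot|re^{i\theta}-re^{i\theta_{0}}|\leq 2r\lip_{C}u$, and combined with $|v'(\theta)|\le r\lip_{C}u$ this gives $|\nabla f|^{2}\leq 4L^{2}+L^{2}=5L^{2}$ where $L:=\lip_{C}u$. On a convex subregion this converts at once to $\lip f\leq\sqrt{5}L$ by integrating along straight segments; the remaining case where $D_{C}$ fails to be convex (arc angle $>\pi$) is the one nontrivial point and will require either decomposing $D_{C}$ into convex sectors or connecting two points via a short polygonal path whose length is controlled by the Euclidean distance between the endpoints. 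This geometric step is the one I expect to be the main obstacle; everything else is elementary one-variable calculus.
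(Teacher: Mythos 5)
The survey itself gives no proof of this lemma (it is quoted from \cite{K:WeylAG}, which is in preparation), so your argument can only be judged on its own terms. The polar-coordinate reduction is the natural one and is correct where you carry it out: the identities $\int_{D_{C}}|\nabla\mathcal{I}_{C}^{a}u|^{2}\,d\vol_{2}=\frac{1}{2}\int_{\alpha}^{\beta}((v-a)^{2}+v'^{2})\,d\theta$ and $\int_{C}|\nabla_{C}u|^{2}\rad(C)\,d\mathscr{H}^{1}_{C}=\int_{\alpha}^{\beta}v'^{2}\,d\theta$ are right, and your Poincar\'{e} step $\int(v-a)^{2}\,d\theta\leq\frac{1}{2}(\beta-\alpha)^{2}\int v'^{2}\,d\theta\leq 2\pi^{2}\int v'^{2}\,d\theta<20\int v'^{2}\,d\theta$ recovers the constant $\frac{2}{21}$ exactly, which strongly suggests this is the intended mechanism. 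One caveat on \eqref{eq:circular-arc-L2}: your displayed expansion of $\int_{D_{C}}|\mathcal{I}_{C}^{a}u|^{2}\,d\vol_{2}$ is garbled (it contains a self-cancelling pair and evaluates incorrectly at both $a=0$ and $a=\overline{u}^{C}$). The correct expansion is $\int_{D_{C}}|\mathcal{I}_{C}^{a}u|^{2}\,d\vol_{2}=\frac{r^{2}}{4}\int_{\alpha}^{\beta}(v^{2}+\frac{2}{3}av+\frac{1}{3}a^{2})\,d\theta$, which equals $\frac{r^{2}}{4}\int v^{2}\,d\theta$ for $a=0$ and $\frac{r^{2}}{4}(\int v^{2}\,d\theta+(\beta-\alpha)\overline{v}^{2})$ for $a=\overline{u}^{C}$; both claimed inequalities then follow from $(\beta-\alpha)\overline{v}^{2}\leq\int v^{2}\,d\theta$, so the conclusion stands but the algebra needs redoing.

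The genuine gap is the $\sqrt{5}$-Lipschitz bound when $\pi<\beta-\alpha<2\pi$, which you explicitly leave open. (For $\beta-\alpha\leq\pi$ the sector is convex, and for $\beta-\alpha=2\pi$ the set $D_{C}$ is the full open disk, again convex; but the lemma is stated for arbitrary arcs, and the arcs $\overline{T(\mathscr{D})}\cap\partial D_{j}$ in the application need not subtend an angle at most $\pi$.) A pointwise gradient bound plus integration along segments genuinely fails here because a chord can leave $\overline{D_{C}}$, and the naive detour through the vertex, using $|f(x)-f(\cent(C))|\leq 2L|x|$, only yields the constant $2\sqrt{2}>\sqrt{5}$ (and degrades further as $\beta-\alpha\to 2\pi$). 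The gap is closable by working with the explicit formula for $f:=\mathcal{I}_{C}^{a}u$ rather than with $|\nabla f|$: writing $x=s_{1}e^{i\theta_{1}}$, $y=s_{2}e^{i\theta_{2}}$, $\Delta:=|\theta_{1}-\theta_{2}|$, $m:=\min\{s_{1},s_{2}\}$, one has $f(x)-f(y)=\frac{m}{r}(v(\theta_{1})-v(\theta_{2}))+\frac{s_{1}-s_{2}}{r}(v(\theta_{j})-a)$ for a suitable $j\in\{1,2\}$, and the bounds $|v(\theta_{1})-v(\theta_{2})|\leq 2rL\sin\frac{\Delta}{2}$, $|v(\theta_{j})-a|\leq 2rL$ together with $|x-y|^{2}=(s_{1}-s_{2})^{2}+4s_{1}s_{2}\sin^{2}\frac{\Delta}{2}$ reduce $|f(x)-f(y)|\leq\sqrt{5}L|x-y|$ to the inequality $(|s_{1}-s_{2}|-4m\sin\frac{\Delta}{2})^{2}\geq 0$. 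So the statement is safe, but as written your proof is incomplete precisely at the point you flagged as the expected obstacle.
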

Combining Lemma \ref{lem:circular-arc-extension} with \eqref{eq:AG-volume-meas-identify}
and \eqref{eq:AG-DF-identify-form}, we obtain the following.
\begin{lem}[{\cite[Lemma 5.21]{K:WeylAG}}]\label{lem:AG-extension}
Let $\mathscr{D}\in\TDTo$ and $u\in\mathcal{C}^{\mathrm{lip}}_{\mathscr{D}}$. Noting
$\overline{\triangle(\mathscr{D})}\setminus(K(\mathscr{D})\setminus K^{0}(\mathscr{D}))
	=\bigcup_{C\in\mathscr{A}_{\mathscr{D}}}\overline{D_{C}}$,
define $\mathcal{I}_{\mathscr{D}}^{0}u\in\mathbb{R}^{\overline{\triangle(\mathscr{D})}}$ by
\begin{equation}\label{eq:AG-extension}
\mathcal{I}_{\mathscr{D}}^{0}u|_{K(\mathscr{D})}:=u,\quad
	\mathcal{I}_{\mathscr{D}}^{0}u|_{\overline{D_{C}}}:=
	\begin{cases}
		\mathcal{I}_{C}^{0}(u|_{C}) & \textrm{if $C\subset\partial T(\mathscr{D})$,}\\
		\mathcal{I}_{C}^{\overline{u}^{C}}(u|_{C}) & \textrm{if $C\not\subset\partial T(\mathscr{D})$,}
	\end{cases}
	\quad C\in\mathscr{A}_{\mathscr{D}}.
\end{equation}
If also $u|_{V_{0}(\mathscr{D})}=0$, then
$\mathcal{I}_{\mathscr{D}}^{0}u|_{\partial\triangle(\mathscr{D})}=0$,
$\lip_{\overline{\triangle(\mathscr{D})}}\mathcal{I}_{\mathscr{D}}^{0}u
	\leq\sqrt{5}\lip_{K(\mathscr{D})}u$,
\begin{gather}\label{eq:AG-extension-W12}
\frac{2}{21}\int_{\triangle(\mathscr{D})}|\nabla\mathcal{I}_{\mathscr{D}}^{0}u|^{2}\,d\vol_{2}
	\leq\mathcal{E}^{\mathscr{D}}(u,u)
	\leq 2\int_{\triangle(\mathscr{D})}|\nabla\mathcal{I}_{\mathscr{D}}^{0}u|^{2}\,d\vol_{2},\\
2\int_{\triangle(\mathscr{D})}|\mathcal{I}_{\mathscr{D}}^{0}u|^{2}\,d\vol_{2}
	\leq\int_{K(\mathscr{D})}u^{2}\,d\mu^{\mathscr{D}}
	\leq 4\int_{\triangle(\mathscr{D})}|\mathcal{I}_{\mathscr{D}}^{0}u|^{2}\,d\vol_{2}.
\label{eq:AG-extenstion-L2}
\end{gather}
\end{lem}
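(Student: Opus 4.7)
The plan is to derive all four assertions by summing the arc-by-arc estimates of Lemma \ref{lem:circular-arc-extension} via the decomposition $\overline{\triangle(\mathscr{D})}=K(\mathscr{D})\cup\bigcup_{C\in\mathscr{A}_{\mathscr{D}}}\overline{D_{C}}$, in which the pies $D_{C}$ are pairwise disjoint open sets with $\triangle(\mathscr{D})\setminus K(\mathscr{D})=\bigsqcup_{C}D_{C}$. First I would check that $\mathcal{I}_{\mathscr{D}}^{0}u$ is well-defined and continuous on $\overline{\triangle(\mathscr{D})}$: on each $C=\overline{D_{C}}\cap K(\mathscr{D})$ the pie extension equals $u$ by setting $t=1$ in \eqref{eq:circular-arc-extension}; two distinct pies $\overline{D_{C}},\overline{D_{A}}$ meet only at tangency points $p\in C\cap A\subset K^{0}(\mathscr{D})$, where both extensions again give $u(p)$; and continuity at a point in $K(\mathscr{D})\setminus K^{0}(\mathscr{D})$ approached by pies follows because those pies must have diameters tending to $0$, forcing their extension values to converge to $u$ by continuity of $u$.

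For the boundary vanishing, each edge of $\partial\triangle(\mathscr{D})$ is the segment between two centers $\cent(D_{j}),\cent(D_{k})$ passing through the tangency $q_{l}(\mathscr{D})\in V_{0}(\mathscr{D})$ (with $l$ the third index) and splits there into boundary segments of the two adjacent outer pies; since $C_{j}:=\overline{T(\mathscr{D})}\cap\partial D_{j}\subset\partial T(\mathscr{D})$ dictates $a_{C_{j}}=0$ and $u(q_{l})=0$, the linear interpolation $(1-t)\cdot 0+t\cdot u(q_{l})$ vanishes identically along the whole edge. For the $W^{1,2}$ and $L^{2}$ comparisons I would use $\vol_{2}(K(\mathscr{D}))=0$ to split $\int_{\triangle(\mathscr{D})}|\nabla\mathcal{I}_{\mathscr{D}}^{0}u|^{2}\,d\vol_{2}$ as $\sum_{C}\int_{D_{C}}|\nabla\mathcal{I}_{C}^{a_{C}}(u|_{C})|^{2}\,d\vol_{2}$ and similarly for $\int_{\triangle(\mathscr{D})}|\mathcal{I}_{\mathscr{D}}^{0}u|^{2}\,d\vol_{2}$, observe by \eqref{eq:AG-DF-identify-form} and \eqref{eq:AG-volume-meas-identify} that $\mathcal{E}^{\mathscr{D}}(u,u)$ and $\int_{K(\mathscr{D})}u^{2}\,d\mu^{\mathscr{D}}$ decompose as $\sum_{C}\int_{C}|\nabla_{C}(u|_{C})|^{2}\rad(C)\,d\mathscr{H}^{1}_{C}$ and $\sum_{C}\int_{C}u^{2}\rad(C)\,d\mathscr{H}^{1}_{C}$, and then sum the per-arc inequalities \eqref{eq:circular-arc-W12} and \eqref{eq:circular-arc-L2} over $C\in\mathscr{A}_{\mathscr{D}}$. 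Admissibility is immediate: $a_{C}\in\{0,\overline{u}^{C}\}$ by construction, while $a_{C}\in[\min_{C}u,\max_{C}u]$ is trivial in the inner-arc case $a_{C}=\overline{u}^{C}$ and holds in the outer-arc case $a_{C_{j}}=0$ because both endpoints of $C_{j}$ lie in $V_{0}(\mathscr{D})$ where $u$ vanishes.

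The main obstacle will be the global Lipschitz bound. Lemma \ref{lem:circular-arc-extension} gives $\lip_{\overline{D_{C}}}\mathcal{I}_{C}^{a_{C}}(u|_{C})\leq\sqrt{5}L$ on each pie (with $L:=\lip_{K(\mathscr{D})}u$), but $\overline{\triangle(\mathscr{D})}$ is covered by infinitely many such pies together with the non-convex set $K(\mathscr{D})$. To deduce a global bound I would fix $x,y\in\overline{\triangle(\mathscr{D})}$, set $f(t):=\mathcal{I}_{\mathscr{D}}^{0}u(x+t(y-x))$ on $[0,1]$ (continuous by the first step), and show $\limsup_{h\to 0}|f(t+h)-f(t)|/|h|\leq\sqrt{5}L|y-x|$ at every $t$: when $x+t(y-x)\in D_{C}$ this is immediate from the pie Lipschitz bound (small $h$ keeps the point in $D_{C}$), while when $x+t(y-x)\in K(\mathscr{D})$ one takes the maximal $\tilde{t}\in[t,t+h]$ with $x+\tilde{t}(y-x)\in K(\mathscr{D})$ (existing since $K(\mathscr{D})$ is closed), notes that connectedness confines the sub-segment over $[\tilde{t},t+h]$ to a single pie, and combines the $L$-bound of $u$ on $[t,\tilde{t}]$ with the $\sqrt{5}L$-bound of the pie on $[\tilde{t},t+h]$ to get $|f(t+h)-f(t)|\leq\sqrt{5}L|h||y-x|$. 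A standard Dini-derivative lemma then yields $|f(1)-f(0)|\leq\sqrt{5}L|y-x|$, which is the desired global Lipschitz estimate.
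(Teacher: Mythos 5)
Your proposal is correct and follows exactly the route the paper indicates for this lemma, namely summing the per-arc estimates of Lemma \ref{lem:circular-arc-extension} over the pairwise disjoint pies $D_{C}$ via the decompositions \eqref{eq:AG-volume-meas-identify} and \eqref{eq:AG-DF-identify-form}; the admissibility checks on $a_{C}$ and the Dini-derivative patching of the local Lipschitz bounds supply precisely the details the paper leaves implicit.
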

\begin{proof}[Sketch of the proof of Theorem \textup{\ref{thm:AG-DF-regular}}]
Recall the following classical fact implied by
\cite[Lemma 6.2.1, Theorems 4.5.1, 4.5.3 and 6.1.6]{Davies:STDO}: if $Q$ is an open
rectangle in $\mathbb{C}$ whose smaller side length is $\delta\in(0,+\infty)$, then
\begin{equation}\label{eq:spectral-gap-rectangle}
\int_{Q}u^{2}\,d\vol_{2}\leq\frac{\delta^{2}}{\pi^{2}}\int_{Q}|\nabla u|^{2}\,d\vol_{2}
\end{equation}
for any $u\in\mathbb{R}^{\overline{Q}}$ with $\lip_{\overline{Q}}u<+\infty$ and $u|_{\partial Q}=0$.
Since $\triangle(\mathscr{D})\subset Q$ for some such $Q$ with $\delta=3\kappa(\mathscr{D})^{-1}$
and then each $u\in\mathbb{R}^{\overline{\triangle(\mathscr{D})}}$ with
$\lip_{\overline{\triangle(\mathscr{D})}}u<+\infty$ and $u|_{\partial\triangle(\mathscr{D})}=0$
can be extended to $\overline{Q}$ by setting
$u|_{\overline{Q}\setminus\triangle(\mathscr{D})}:=0$ so as to satisfy
$\lip_{\overline{Q}}u<+\infty$ and $u|_{\partial Q}=0$, we easily see
from Lemma \ref{lem:AG-extension} and \eqref{eq:spectral-gap-rectangle} that
\eqref{eq:AG-spectral-gap} holds for any
$u\in\mathcal{F}_{\mathscr{D},0}^{0}\cap\mathcal{C}^{\mathrm{lip}}_{\mathscr{D}}$.

Now, by utilizing the canonical approximation of each $u\in\mathcal{F}_{\mathscr{D}}$ by
piecewise linear functions as in the sketch of the proof of Theorem \ref{thm:AG-DF-identify} above,
we can show that \eqref{eq:AG-spectral-gap} extends to any $u\in\mathcal{F}_{\mathscr{D},0}^{0}$,
which implies $\mathcal{F}_{\mathscr{D}}\subset L^{2}(K(\mathscr{D}),\mu^{\mathscr{D}})$, and that
the inclusion map $\mathcal{F}_{\mathscr{D}}\hookrightarrow L^{2}(K(\mathscr{D}),\mu^{\mathscr{D}})$
is the limit in operator norm of finite-rank linear operators and hence compact.
The rest of the proof is straightforward.
\end{proof}
\begin{proof}[Sketch of the proof of Theorem \textup{\ref{thm:AG-DF-regular-unique}}]
The implication from (2) to (1) is immediate from Theorem \ref{thm:AG-DF-identify} and
Theorem \ref{thm:AG-Vm-Laplacians-exists}-(2),(3). That from (1) to (2) can be shown by
defining the trace $\mathcal{E}|_{V_{m}(\mathscr{D})}$ of $(\mathcal{E},\mathcal{F})$
to $V_{m}(\mathscr{D})$ for $m\in\mathbb{N}\cup\{0\}$
in essentially the same way as \eqref{eq:AG-DF-trace-Vm}, proving that
$\{\mathcal{E}|_{V_{m}(\mathscr{D})}\}_{m\in\mathbb{N}\cup\{0\}}$ satisfies
Theorem \ref{thm:AG-Vm-Laplacians-exists}-(1),(2),(3) by the assumption of (1)
and then applying Theorem \ref{thm:AG-Vm-Laplacians-unique} to conclude that
$\{\mathcal{E}|_{V_{m}(\mathscr{D})}\}_{m\in\mathbb{N}\cup\{0\}}
	=\{c\mathcal{E}^{\mathscr{D}}_{m}\}_{m\in\mathbb{N}\cup\{0\}}$
for some $c\in\mathbb{R}$, which is easily seen to imply (2).
\end{proof}
%
\section{Weyl's eigenvalue asymptotics for the Apollonian gasket}\label{sec:Weyl-AG}
The following proposition is an easy consequence of Theorem \ref{thm:AG-DF-regular};
see also \cite[Exercise 4.2, Corollary 4.2.3, Theorems 4.5.1 and 4.5.3]{Davies:STDO}.
\begin{prop}\label{prop:Laplacian-eigenvalues}
Let $\mathscr{D}\in\TDTo$, let $V$ be a finite subset of $V_{*}(\mathscr{D})$ and set
$\mathcal{F}_{\mathscr{D},V}^{0}:=\{u\in\mathcal{F}_{\mathscr{D}}\mid u|_{V}=0\}$. Then
$(\mathcal{E}^{\mathscr{D}}|_{\mathcal{F}_{\mathscr{D},V}^{0}\times\mathcal{F}_{\mathscr{D},V}^{0}},\mathcal{F}_{\mathscr{D},V}^{0})$
is a strongly local, regular symmetric Dirichlet form on $L^{2}(K(\mathscr{D})\setminus V,\mu^{\mathscr{D}})$,
and there exists a unique non-decreasing sequence $\{\lambda^{\mathscr{D},V}_{n}\}_{n\in\mathbb{N}}\subset[0,+\infty)$
such that $-\mathcal{L}_{\mathscr{D},V}\varphi^{\mathscr{D},V}_{n}=\lambda^{\mathscr{D},V}_{n}\varphi^{\mathscr{D},V}_{n}$
for any $n\in\mathbb{N}$ for some complete orthonormal system
$\{\varphi^{\mathscr{D},V}_{n}\}_{n\in\mathbb{N}}\subset\mathcal{D}(\mathcal{L}_{\mathscr{D},V})$
of $L^{2}(K(\mathscr{D})\setminus V,\mu^{\mathscr{D}})$; here
$\mathcal{L}_{\mathscr{D},V}:\mathcal{D}(\mathcal{L}_{\mathscr{D},V})\to L^{2}(K(\mathscr{D})\setminus V,\mu^{\mathscr{D}})$
denotes the \emph{Laplacian}, i.e., the non-positive self-adjoint operator on
$L^{2}(K(\mathscr{D})\setminus V,\mu^{\mathscr{D}})$, associated with
$(\mathcal{E}^{\mathscr{D}}|_{\mathcal{F}_{\mathscr{D},V}^{0}\times\mathcal{F}_{\mathscr{D},V}^{0}},\mathcal{F}_{\mathscr{D},V}^{0})$.
Also, $\lim_{n\to\infty}\lambda^{\mathscr{D},V}_{n}=+\infty$, and for any $n\in\mathbb{N}$,
\begin{equation}\label{eq:min-max}
\lambda^{\mathscr{D},V}_{n}=\min\biggl\{\max_{u\in L\setminus\{0\}}\frac{\mathcal{E}^{\mathscr{D}}(u,u)}{\int_{K(\mathscr{D})}u^{2}\,d\mu^{\mathscr{D}}}\biggm|
	\begin{minipage}{90.5pt}
		$L$ is a linear subspace of $\mathcal{F}_{\mathscr{D},V}^{0}$, $\dim L=n$
	\end{minipage}\biggr\}.
\end{equation}
\end{prop}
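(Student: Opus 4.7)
The plan is to deduce the proposition from Theorem \ref{thm:AG-DF-regular} by restriction to the closed subspace $\mathcal{F}_{\mathscr{D},V}^{0}$ and then to invoke standard spectral theory for closed forms with compact resolvent. First I would observe that $\mu^{\mathscr{D}}(V)=0$, because $V$ is finite and $\mu^{\mathscr{D}}$ is a countable sum of length measures of circular arcs by \eqref{eq:AG-volume-meas-identify}; this identifies $L^{2}(K(\mathscr{D})\setminus V,\mu^{\mathscr{D}})$ with $L^{2}(K(\mathscr{D}),\mu^{\mathscr{D}})$ as a Hilbert space. Since $V\subset V_{*}(\mathscr{D})\subset K^{0}(\mathscr{D})$ and every $u\in\mathcal{F}_{\mathscr{D}}$ is absolutely continuous on each $C\in\mathscr{A}_{\mathscr{D}}$, the pointwise value $u(x)$ is well defined for every $x\in V$, and the trace $\mathcal{F}_{\mathscr{D}}\ni u\mapsto u(x)\in\mathbb{R}$ is continuous in $\|\cdot\|_{\mathcal{F}_{\mathscr{D}}}$ by the one-dimensional Sobolev embedding $W^{1,2}(C)\hookrightarrow\mathcal{C}(C)$ applied to any arc $C\in\mathscr{A}_{\mathscr{D}}$ with $x\in C$. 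Consequently $\mathcal{F}_{\mathscr{D},V}^{0}$ is closed in $(\mathcal{F}_{\mathscr{D}},\|\cdot\|_{\mathcal{F}_{\mathscr{D}}})$, so the restricted form is closed on $L^{2}(K(\mathscr{D})\setminus V,\mu^{\mathscr{D}})$, and the Markov property together with strong locality are directly inherited from $(\mathcal{E}^{\mathscr{D}},\mathcal{F}_{\mathscr{D}})$.

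Second, for regularity on the locally compact space $K(\mathscr{D})\setminus V$, I would take as core the family $\mathcal{A}_{V}$ of $u\in\mathcal{C}^{\mathrm{lip}}_{\mathscr{D}}$ that vanish on some open neighborhood of $V$; note $\mathcal{A}_{V}\subset\mathcal{F}_{\mathscr{D},V}^{0}$ by Definition \ref{dfn:AG-volume-meas-identify}-(2). Uniform density of $\mathcal{A}_{V}$ in $(\mathcal{C}_{\mathrm{c}}(K(\mathscr{D})\setminus V),\|\cdot\|_{\sup})$ follows from the Stone--Weierstrass theorem, since $\mathcal{A}_{V}$ is a subalgebra, separates points of $K(\mathscr{D})\setminus V$, and does not vanish identically at any given point of $K(\mathscr{D})\setminus V$ (Lipschitz functions built from Euclidean distances to $V$ and to individual points suffice). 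Form-norm density of $\mathcal{A}_{V}$ in $\mathcal{F}_{\mathscr{D},V}^{0}$ is obtained by first approximating a given $u\in\mathcal{F}_{\mathscr{D},V}^{0}$ in $\|\cdot\|_{\mathcal{F}_{\mathscr{D}}}$ by piecewise linear functions, as in the sketch of the proof of Theorem \ref{thm:AG-DF-identify}, and then multiplying such approximations by Lipschitz cutoffs $\chi_{\varepsilon}$ vanishing in $\varepsilon$-neighborhoods of $V$; the resulting error in $\|\cdot\|_{\mathcal{F}_{\mathscr{D}}}$ tends to zero as $\varepsilon\downarrow 0$ because $u|_{V}=0$, arc-wise $W^{1,2}$ functions are continuous, and $\mu^{\mathscr{D}}$ charges no singleton.

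Third, the compactness of the embedding $\mathcal{F}_{\mathscr{D}}\hookrightarrow L^{2}(K(\mathscr{D}),\mu^{\mathscr{D}})$ asserted in Theorem \ref{thm:AG-DF-regular} restricts to compactness of $\mathcal{F}_{\mathscr{D},V}^{0}\hookrightarrow L^{2}(K(\mathscr{D})\setminus V,\mu^{\mathscr{D}})$, whence the non-positive self-adjoint operator $\mathcal{L}_{\mathscr{D},V}$ attached to the restricted form has compact resolvent $(1-\mathcal{L}_{\mathscr{D},V})^{-1}$. The Hilbert--Schmidt spectral theorem for compact self-adjoint operators then produces the desired non-decreasing sequence $\{\lambda^{\mathscr{D},V}_{n}\}_{n\in\mathbb{N}}\subset[0,+\infty)$ with $\lim_{n\to\infty}\lambda^{\mathscr{D},V}_{n}=+\infty$ and a corresponding complete orthonormal eigenbasis, while \eqref{eq:min-max} is the standard Courant--Fischer characterization. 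The main obstacle in this program is the regularity step above: all other ingredients are either directly supplied by Theorem \ref{thm:AG-DF-regular} or are formal consequences of closed-form spectral theory, whereas regularity genuinely requires producing continuous compactly supported approximants vanishing near $V$ while remaining close in the form norm.
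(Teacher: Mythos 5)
Your overall strategy is exactly what the paper intends: the paper offers no written proof of this proposition beyond declaring it ``an easy consequence'' of Theorem \ref{thm:AG-DF-regular} together with the standard spectral theory of closed forms with compact resolvent cited from Davies, and your steps (closedness of $\mathcal{F}_{\mathscr{D},V}^{0}$ via continuity of point evaluations, inheritance of the Markov property and strong locality, compactness of the embedding, the spectral theorem and Courant--Fischer) fill in precisely those routine details correctly.

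The one step that does not work as written is the form-norm density argument in your regularity paragraph. You approximate $u\in\mathcal{F}_{\mathscr{D},V}^{0}$ by piecewise linear (hence Lipschitz) functions $v$ first and \emph{then} multiply by cutoffs $\chi_{\varepsilon}$, justifying the smallness of the cutoff error by ``$u|_{V}=0$''; but the cutoff is applied to $v$, not to $u$, and $v$ need not vanish on $V$. Since each $x\in V$ lies on an arc $C\in\mathscr{A}_{\mathscr{D}}$ with $\rad(C)\mathscr{H}^{1}_{C}(C\cap B_{\varepsilon}(x))\asymp\varepsilon$, for a fixed continuous $v$ with $v(x)=\delta\neq 0$ one has $\int|\nabla_{\mathscr{D}}(v\chi_{\varepsilon}-v)|^{2}\,d\mu^{\mathscr{D}}\gtrsim\delta^{2}\varepsilon^{-2}\cdot\varepsilon=\delta^{2}/\varepsilon\to+\infty$ as $\varepsilon\downarrow 0$: points of $V_{*}(\mathscr{D})$ have positive capacity for this form (this is the same fact that makes point evaluation continuous), so the cutoff error does not tend to zero in the order of limits you state. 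The gap is repairable: either replace $v$ by $v-\sum_{x\in V}v(x)\psi_{x}$ for fixed Lipschitz bumps $\psi_{x}$ with $\psi_{x}|_{V}=\ind{x}|_{V}$ (then the corrected approximant is Lipschitz, vanishes on $V$, and still converges to $u$ because $v(x)\to u(x)=0$, and for such a function the cutoff error is $O(\Lip(v)^{2}\varepsilon)$ for each fixed $v$), or perform the cutoff on $u$ itself first (using the $1/2$-H\"{o}lder bound from $W^{1,2}(C)$ on the arcs through $V$ and dominated convergence elsewhere) and only afterwards approximate by Lipschitz functions supported away from $V$. Alternatively, and most economically, regularity of the restricted form is the standard statement that the part of a regular Dirichlet form on the open set $K(\mathscr{D})\setminus V$ is again regular (\cite[Theorem 4.4.3]{FOT}), once one notes that $\mathcal{F}_{\mathscr{D},V}^{0}$ coincides with the domain of that part because every point of $V$ has positive capacity.
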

The proof of the following theorem is the principal aim of \cite{K:WeylAG}.
\begin{thm}[{\cite[Theorem 7.1]{K:WeylAG}}]\label{thm:Weyl-AG}
There exists $c_{\mathsf{AG}}\in(0,+\infty)$ such that for any $\mathscr{D}\in\TDTo$
and any finite subset $V$ of $V_{*}(\mathscr{D})$,
\begin{equation}\label{eq:Weyl-AG}
\lim_{\lambda\to+\infty}\frac{\#\{n\in\mathbb{N}\mid\lambda^{\mathscr{D},V}_{n}\leq\lambda\}}{\lambda^{d_{\mathsf{AG}}/2}}
	=c_{\mathsf{AG}}\mathscr{H}^{d_{\mathsf{AG}}}(K(\mathscr{D})).
\end{equation}
\end{thm}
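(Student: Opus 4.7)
The overall strategy is \emph{Dirichlet--Neumann bracketing} combined with a renewal-theoretic argument built on Kesten's renewal theorem \cite{Kesten:AOP1974} (as alluded to in Remark \ref{rmk:AG-Hausdorff-meas}). Write $N^{\mathscr{D},V}(\lambda):=\#\{n\in\mathbb{N}\mid\lambda^{\mathscr{D},V}_{n}\leq\lambda\}$. For each $m\in\mathbb{N}$, the decomposition $K(\mathscr{D})=\bigcup_{w\in W_{m}}K(\mathscr{D}_{w})$ together with \eqref{eq:AG-arcs-Kw}, the cellwise additivity of $\mathcal{E}^{\mathscr{D}}$ and $\mu^{\mathscr{D}}$ from \eqref{eq:AG-DF-identify-form}, and the variational formula \eqref{eq:min-max} yield a bracketing inequality
\[
\sum_{w\in W_{m}}N^{\mathscr{D}_{w}}_{D}(\lambda)\;\leq\;N^{\mathscr{D},V}(\lambda)\;\leq\;\sum_{w\in W_{m}}N^{\mathscr{D}_{w}}_{N}(\lambda)+O(\#V_{m}(\mathscr{D})),
\]
where $N^{\mathscr{D}_{w}}_{D}$ and $N^{\mathscr{D}_{w}}_{N}$ are the counting functions for Dirichlet (vanishing on $V_{0}(\mathscr{D}_{w})$) and free boundary conditions on the cell, and the $O$-term accounts for the finite-rank correction incurred when imposing or lifting the vanishing condition on $V_{m}(\mathscr{D})\cup V$.

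Next I would establish the conformal scaling of cell spectra. Fix a reference $\mathscr{D}^{*}\in\TDTo$ once and for all; using the orientation-preserving M\"{o}bius map $f_{\mathscr{D}^{*},\mathscr{D}_{w}}$ from the proof of Lemma \ref{lem:AG-Hausdorff-dim}, which carries $K(\mathscr{D}^{*})$ onto $K(\mathscr{D}_{w})$ and $\mathscr{A}_{\mathscr{D}^{*}}$ onto $\mathscr{A}_{\mathscr{D}_{w}}$, one checks from \eqref{eq:AG-volume-meas-identify}--\eqref{eq:AG-DF-identify-form} (the weight $\rad(C)$ being precisely what makes the one-dimensional energy conformally well behaved in the sense compatible with the scaling of $\mu^{\mathscr{D}}$) that pullback by $f_{\mathscr{D}^{*},\mathscr{D}_{w}}$ yields two-sided bounds of the form
\[
c_{1}\,N^{\mathscr{D}^{*}}_{\bullet}\!\bigl(\rho_{w}^{2}\lambda\bigr)\;\leq\;N^{\mathscr{D}_{w}}_{\bullet}(\lambda)\;\leq\;c_{2}\,N^{\mathscr{D}^{*}}_{\bullet}\!\bigl(\rho_{w}^{2}\lambda\bigr)\qquad(\bullet\in\{D,N\}),
\]
with scale $\rho_{w}$ comparable to $\kappa(\mathscr{D}_{w})^{-1}$ (hence determined up to bounded factors by the matrix evolution \eqref{eq:curvatures-Mw}) and distortion constants $c_{1},c_{2}$ depending only on the shape parameters of $\mathscr{D}_{w}$, which one controls by choosing $m=m(\lambda)$ adaptively so that the cells reaching level $m$ have bounded shape distortion.

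Substituting these cell estimates into the bracketing inequality and passing to the logarithmic variable via $\lambda=e^{2t}$, the function $F^{\mathscr{D}}(t):=e^{-d_{\mathsf{AG}}t}N^{\mathscr{D},V}(e^{2t})$ satisfies, up to vanishing error, a renewal equation on $\TDTo$ of the schematic form $F^{\mathscr{D}}(t)\approx\sum_{w}\rho_{w}^{d_{\mathsf{AG}}}F^{\mathscr{D}_{w}}(t+\log\rho_{w}^{-1})$, driven by the dynamical system on $\TDTo$ generated by $\Phi_{1},\Phi_{2},\Phi_{3}$. The ergodic-theoretic hypotheses of Kesten's renewal theorem \cite[Theorem 2]{Kesten:AOP1974}---aperiodicity and direct Riemann integrability of the forcing term---are verified in \cite{K:WeylSurvErgodicTh} precisely via the equicontinuity of $\bigl\{\mathscr{H}^{d_{\mathsf{AG}}}(K(\mathscr{D}_{w}))\bigr\}_{w\in W_{*}}$ in $\bigl(\curv(D_{1}),\curv(D_{2}),\curv(D_{3})\bigr)$ noted in Remark \ref{rmk:AG-Hausdorff-meas}. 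Kesten's theorem then delivers $F^{\mathscr{D}}(t)\to c_{\mathsf{AG}}\mathscr{H}^{d_{\mathsf{AG}}}(K(\mathscr{D}))$ as $t\to+\infty$, equivalent to \eqref{eq:Weyl-AG}; the finite set $V$ affects only finitely many eigenvalues and so is invisible at leading order.

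The principal analytic obstacle is to make the Neumann-side cell estimate quantitative \emph{uniformly in $w$} so that the boundary correction $O(\#V_{m}(\mathscr{D}))$ remains negligible against the main term $e^{d_{\mathsf{AG}}t}$. The explicit dependence of the spectral gap \eqref{eq:AG-spectral-gap} on $\kappa(\mathscr{D})^{-2}$ is tailored for exactly this: combined with the extension Lemmas \ref{lem:circular-arc-extension} and \ref{lem:AG-extension}, it reduces each cell eigenfunction to a two-dimensional Sobolev function on $\triangle(\mathscr{D}_{w})$ with controlled Dirichlet and $L^{2}$ norms and thence to a uniform cell-by-cell spectral count via planar domain monotonicity. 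A secondary difficulty is that $\Phi_{1},\Phi_{2},\Phi_{3}$ possess parabolic fixed points in $\TDTo$ (visible in the polynomial growth of $M_{j^{n}}$ in \eqref{eq:M1nM2nM3n}), so the underlying symbolic system is non-uniformly expanding; handling this carefully in the renewal-theorem step is the main technical content of the ergodic-theoretic companion \cite{K:WeylSurvErgodicTh}.
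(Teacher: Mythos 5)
Your high-level plan (self-similar decomposition, spectral gap to control the boundary correction, Kesten's renewal theorem on a shape space) matches the paper's, but two steps as you have set them up would not go through. First, the cell comparison
$c_{1}\,N^{\mathscr{D}^{*}}_{\bullet}(\rho_{w}^{2}\lambda)\leq N^{\mathscr{D}_{w}}_{\bullet}(\lambda)\leq c_{2}\,N^{\mathscr{D}^{*}}_{\bullet}(\rho_{w}^{2}\lambda)$
with distortion constants $c_{1}\not=c_{2}$ can only yield two-sided bounds on $\limsup$ and $\liminf$ of $\mathscr{N}(\lambda)/\lambda^{d_{\mathsf{AG}}/2}$; it cannot produce a renewal equation that holds ``up to vanishing error,'' and hence cannot deliver the existence of the limit with the precise constant $c_{\mathsf{AG}}\mathscr{H}^{d_{\mathsf{AG}}}(K(\mathscr{D}))$. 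The paper avoids any comparison to a reference shape: it treats $\lambda_{n}$ directly as a function of the curvature vector $g\in\Gamma^{\circ}$, uses the \emph{exact} homogeneity $\lambda_{n}(sg)=s\lambda_{n}(g)$ and the \emph{exact} identity \eqref{eq:eigenvalue-counting-AG-decomp-lambda} (the cells meet only at finitely many points, so imposing the vanishing condition on $V_{\lambda}$ splits the form into an orthogonal sum with no multiplicative loss), and the only error is the additive finite-rank correction $\#V_{\lambda}$. Kesten's theorem is then applied to the Markov chain on the non-compact shape space $\widetilde{\Gamma}$, which is what absorbs the unbounded shape distortion you are trying to suppress by hand.

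Second, your decomposition over a single level $W_{m}$ with $m=m(\lambda)$ chosen ``so that the cells reaching level $m$ have bounded shape distortion'' is not achievable: by \eqref{eq:M1nM2nM3n} the cells $\mathscr{D}_{j^{m}}$ degenerate (one curvature grows like $m^{2}$ while the other two stay fixed), so no uniform level has uniformly bounded distortion. This parabolic degeneration must be resolved in the analytic decomposition itself, not deferred to the ergodic step: the paper replaces $W_{m}$ by the infinite inducing antichain $I=\{j^{n}k\}$ of \eqref{eq:decomp-index-I}, and the whole point of the spectral gap \eqref{eq:AG-spectral-gap} together with \eqref{eq:M1nM2nM3n} is the vanishing statement \eqref{eq:eigenvalue-counting-AG-end-zero}: all cells $\mathscr{D}_{\tau}$ with $\tau\leq j^{n_{\lambda}}$, $n_{\lambda}\asymp\sqrt{\lambda}$, carry no eigenvalue $\leq\lambda$. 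This simultaneously truncates the infinite sum over $I$ to the finite set $I_{\lambda}$ and bounds the rank correction by $\#V_{\lambda}=9n_{\lambda}-3=O(\lambda^{1/2})$, which is $o(\lambda^{d_{\mathsf{AG}}/2})$ precisely because $d_{\mathsf{AG}}>1$ (Theorem \ref{thm:AG-Hausdorff-dim-bounds}). Your ``planar domain monotonicity'' route for the Neumann side is both vaguer and unnecessary: in this finitely ramified situation the free and Dirichlet cell counts already differ by the rank bound, so the paper works with Dirichlet conditions only. The reduction from general finite $V$ to $V=V_{0}(\mathscr{D})$ via \eqref{eq:eigenvalue-counting-DirichletBC-V} you do have correctly.
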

The rest of this section outlines the analytic aspects of the proof of
Theorem \ref{thm:Weyl-AG}. It can be deduced from the following theorem applicable
to more general counting functions, including the classical one given by
$\#\{w\in W_{*}\mid\curv(D_{\mathrm{in}}(\mathscr{D}_{w}))\leq\lambda\}$,
whose asymptotic behavior analogous to \eqref{eq:Weyl-AG} has been obtained
first by Oh and Shah in \cite[Corollary 1.8]{OhShah:InventMath2012}.
\begin{dfn}\label{dfn:AG-geometry-parametrize-index-I}
\begin{itemize}[label=\textup{(1)},align=left,leftmargin=*,topsep=0pt,parsep=0pt,itemsep=0pt]
\item[\textup{(1)}]We define $I:=\{j^{n}k\mid\textrm{$j,k\in S$, $j\not=k$, $n\in\mathbb{N}$}\}$,
	so that $I\subset W_{*}\setminus\{\emptyset\}$,
	$\tau\not\asymp\upsilon$ for any $\tau,\upsilon\in I$ with $\tau\not=\upsilon$ and
	\begin{equation}\label{eq:decomp-index-I}
	K(\mathscr{D})\setminus V_{0}(\mathscr{D})=\bigcup\nolimits_{\tau\in I}K(\mathscr{D}_{\tau})
		\qquad\textrm{for any $\mathscr{D}\in\TDT$.}
	\end{equation}
\item[\textup{(2)}]We define $\Gamma\subset[0,+\infty)^{4}$ by
	$\Gamma:=\{(g,\kappa(g))\mid\textrm{$g\in[0,+\infty)^{3}$, $\kappa(g)>0$}\}$,
	where $\kappa(g):=\sqrt{\beta\gamma+\gamma\alpha+\alpha\beta}$ for
	$g=(\alpha,\beta,\gamma)\in[0,+\infty)^{3}$, and set
	$\Gamma^{\circ}:=\Gamma\cap(0,+\infty)^{4}$, which is an open subset of $\Gamma$;
	recall Propositions \ref{prop:circumscribed-inscribed} and \ref{prop:curvatures-Mw}
	and note that $gM_{w}\in\Gamma$ for any $g\in\Gamma$ and any $w\in W_{*}$.
\item[\textup{(3)}]Recalling Theorem \ref{thm:AG-Hausdorff-meas}, we set
	$\HM(g):=\mathscr{H}^{d_{\mathsf{AG}}}(K(\mathscr{D}))$ for each $g=(\alpha,\beta,\gamma,\kappa)\in\Gamma$,
	where we take any $\mathscr{D}=(D_{1},D_{2},D_{3})\in\TDT$ with
	$\bigl(\curv(D_{1}),\curv(D_{2}),\curv(D_{3})\bigr)=(\alpha,\beta,\gamma)$,
	which is easily seen to exist. Note that $\HM(g)=s^{d_{\mathsf{AG}}}\HM(sg)$
	for any $(g,s)\in\Gamma\times(0,+\infty)$.
\end{itemize}
\end{dfn}
\begin{thm}[\cite{K:WeylAG}]\label{thm:general-counting-AG}
Let $\Gamma'$ denote either of $\Gamma$ and $\Gamma^{\circ}$, and
for each $n\in\mathbb{N}$ let $\lambda_{n}:\Gamma'\to(0,+\infty)$ be continuous and
satisfy $\lambda_{n}(sg)=s\lambda_{n}(g)$ for any $(g,s)\in\Gamma'\times(0,+\infty)$.
Suppose that $\lambda_{1}(g)=\min_{n\in\mathbb{N}}\lambda_{n}(g)$ and
$\lim_{n\to\infty}\lambda_{n}(g)=+\infty$ for any $g\in\Gamma'$, set
$\mathscr{N}(g,\lambda):=\#\{n\in\mathbb{N}\mid\lambda_{n}(g)\leq\lambda\}$
for $(g,\lambda)\in\Gamma'\times[0,+\infty)$, and suppose that
there exist $\eta\in[0,d_{\mathsf{AG}})$ and $c\in(0,+\infty)$ such that
for any $g=(\alpha,\beta,\gamma,\kappa)\in\Gamma'$ and any $\lambda\in(0,+\infty)$,
\begin{equation}\label{eq:general-counting-AG-remainder}
\begin{split}
&\sum\nolimits_{\tau\in I}\mathscr{N}(gM_{\tau},\lambda)\leq\mathscr{N}(g,\lambda)\\
&\mspace{30mu}\leq\sum\nolimits_{\tau\in I}\mathscr{N}(gM_{\tau},\lambda)
	+c(\min\{\beta+\gamma,\gamma+\alpha,\alpha+\beta\})^{-\eta}\lambda^{\eta}+c.
\end{split}
\end{equation}
Then there exists $c_{0}\in(0,+\infty)$ such that for any $g\in\Gamma'$,
\begin{equation}\label{eq:general-counting-AG}
\lim_{\lambda\to+\infty}\frac{\mathscr{N}(g,\lambda)}{\lambda^{d_{\mathsf{AG}}}}=c_{0}\HM(g).
\end{equation}
\end{thm}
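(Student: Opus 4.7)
The plan is to prove this by a Markov renewal argument applied to the iterated form of \eqref{eq:general-counting-AG-remainder}, parallel to the role Kesten's renewal theorem \cite[Theorem 2]{Kesten:AOP1974} plays for $\mathscr{H}^{d_{\mathsf{AG}}}(K(\mathscr{D}))$ as indicated in Remark \ref{rmk:AG-Hausdorff-meas}. First I would use $\lambda_{n}(sg) = s\lambda_{n}(g)$ to obtain the homogeneity $\mathscr{N}(sg, \lambda) = \mathscr{N}(g, \lambda/s)$, which, together with $\HM(sg) = s^{-d_{\mathsf{AG}}}\HM(g)$, reduces the statement to showing that on the (pre)compact cross-section $\Sigma := \{g \in \Gamma' \mid \kappa(g) = 1\}$, the normalized counting function
\[
\varphi(g, t) := e^{-d_{\mathsf{AG}} t}\,\mathscr{N}(g, e^{t}), \qquad g \in \Sigma,\ t \in \mathbb{R},
\]
satisfies $\lim_{t \to +\infty}\varphi(g, t) = c_{0}\HM(g)$ for some $c_{0} \in (0, +\infty)$, uniformly in $g \in \Sigma$.

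Next I would convert \eqref{eq:general-counting-AG-remainder} into a renewal equation. Setting $s_{\tau}(g) := \kappa(gM_{\tau})/\kappa(g)$ and $\Pi_{\tau}(g) := gM_{\tau}/\kappa(gM_{\tau}) \in \Sigma$ for $\tau \in I$, the homogeneity yields $\mathscr{N}(gM_{\tau}, \lambda) = \mathscr{N}(\Pi_{\tau}(g), \lambda/s_{\tau}(g))$, so that \eqref{eq:general-counting-AG-remainder} becomes
\[
\varphi(g, t) = \sum_{\tau \in I} s_{\tau}(g)^{-d_{\mathsf{AG}}}\,\varphi\bigl(\Pi_{\tau}(g),\ t - \log s_{\tau}(g)\bigr) + R(g, t),
\]
with a remainder $R(g,t)$ of order $e^{(\eta - d_{\mathsf{AG}})t}$ that decays uniformly on compact subsets of $\Sigma$ since $\eta < d_{\mathsf{AG}}$. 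Using Theorem \ref{thm:AG-Hausdorff-meas}, Proposition \ref{prop:curvatures-Mw} and \eqref{eq:decomp-index-I}, the homogeneity of $\HM$ implies the crucial identity $\HM(g) = \sum_{\tau \in I}s_{\tau}(g)^{-d_{\mathsf{AG}}}\HM(\Pi_{\tau}(g))$, so that the kernel $\mathcal{K}(g, \cdot) := \sum_{\tau \in I} s_{\tau}(g)^{-d_{\mathsf{AG}}}\delta_{\Pi_{\tau}(g)}$ on $\Sigma$ is a Markov kernel whose invariant density is proportional to $\HM$; this expresses the criticality at exponent $d_{\mathsf{AG}}$ that makes the renewal setup natural.

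At this stage I would invoke Kesten's Markov renewal theorem applied to the Markov chain on $\Sigma$ with transition kernel $\mathcal{K}$ and log-step size $-\log s_{\tau}(g)$. The equicontinuity on $\Gamma$ of $\{\HM \circ M_{w}\}_{w \in W_{*}}$ recalled in Remark \ref{rmk:AG-Hausdorff-meas}, combined with the analogous equicontinuity of $\mathscr{N}(\cdot, \lambda)$ inherited from the same self-conformal structure of $K(\mathscr{D})$, provides the spread-out / aperiodicity assumption required by Kesten's theorem. Its conclusion reads
\[
\lim_{t \to +\infty}\varphi(g, t) = c_{0}\,\HM(g) \qquad \text{uniformly in } g \in \Sigma,
\]
from which \eqref{eq:general-counting-AG} follows on un-normalizing.

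The hardest step will be to establish the hypotheses of the renewal theorem uniformly in $g$ and $R$. Since $I$ is infinite and \eqref{eq:M1nM2nM3n} shows that $s_{j^{n}k}(g)$ grows like $n^{2}$, the log-step distribution is heavy-tailed, so the finiteness of its mean under the stationary measure and the non-arithmeticity required by Kesten's theorem are non-trivial inputs rather than formalities. Even more delicately, extending from $\Gamma^{\circ}$ to the full $\Gamma$ forces one to approach the boundary of $\Sigma$ where the error coefficient $(\min\{\beta+\gamma, \gamma+\alpha, \alpha+\beta\})^{-\eta}$ in \eqref{eq:general-counting-AG-remainder} diverges; this is handled by an absorbing argument that uses a finite iterate of \eqref{eq:general-counting-AG-remainder} to gain a uniform distance from the degenerate locus before invoking the renewal theorem. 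These are precisely the ergodic-theoretic ingredients that form the subject of the companion survey \cite{K:WeylSurvErgodicTh}.
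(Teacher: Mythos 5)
Your overall strategy coincides with the paper's: Theorem \ref{thm:general-counting-AG} is indeed proved by applying Kesten's renewal theorem to a Markov chain on a cross-section of $\Gamma$ (the ``space of Euclidean shapes''), with the renewal equation obtained by iterating \eqref{eq:general-counting-AG-remainder} and the error term controlled by $\eta<d_{\mathsf{AG}}$. Your reduction to a cross-section, the renewal equation for $\varphi(g,t)=e^{-d_{\mathsf{AG}}t}\mathscr{N}(g,e^{t})$, and the identity $\HM(g)=\sum_{\tau\in I}s_{\tau}(g)^{-d_{\mathsf{AG}}}\HM(\Pi_{\tau}(g))$ (which follows from \eqref{eq:decomp-index-I} together with $\mathscr{H}^{d_{\mathsf{AG}}}$ vanishing on countable sets) all match the intended argument.

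There is, however, one concrete misstep. The kernel $\mathcal{K}(g,\cdot)=\sum_{\tau\in I}s_{\tau}(g)^{-d_{\mathsf{AG}}}\delta_{\Pi_{\tau}(g)}$ is \emph{not} a Markov kernel: its total mass is $\sum_{\tau\in I}s_{\tau}(g)^{-d_{\mathsf{AG}}}$, which need not be (and in general is not) $1$, since $K(\mathscr{D})$ is self-conformal rather than self-similar and its pieces do not carry equal Hausdorff-measure density. The identity you derived says that $\HM$ is a positive \emph{right} eigenfunction of $\mathcal{K}$ with eigenvalue $1$ --- a harmonic function, not an invariant density --- so the phrase ``a Markov kernel whose invariant density is proportional to $\HM$'' conflates left and right eigenfunctions, and as written Kesten's theorem cannot be invoked for $\mathcal{K}$. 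The repair is the Doob $h$-transform by $\HM$: pass to $\psi(g,t):=\varphi(g,t)/\HM(g)$ and to the stochastic kernel $\sum_{\tau\in I}\bigl(s_{\tau}(g)^{-d_{\mathsf{AG}}}\HM(\Pi_{\tau}(g))/\HM(g)\bigr)\,\delta_{\Pi_{\tau}(g)}$, which on the cross-section $\widetilde{\Gamma}=\{\HM=1\}$ is precisely the transition function $\mathscr{P}(g,\cdot)=\sum_{\tau\in I}\HM(gM_{\tau})\delta_{[gM_{\tau}]_{\Gamma}}$ used in the paper; Kesten's theorem then yields $\psi\to c_{0}$, i.e.\ $\varphi\to c_{0}\HM$. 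This is also why the paper normalizes the cross-section by $\HM$ rather than by $\kappa$. You have all the ingredients for this fix, and your remaining caveats (heavy-tailed log-steps visible from \eqref{eq:M1nM2nM3n}, non-arithmeticity, degeneration of the error coefficient near the boundary of $\Gamma$) correctly identify the genuinely hard ergodic-theoretic inputs that the paper defers to \cite{K:WeylSurvErgodicTh} and \cite{K:WeylAG}.
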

Theorem \ref{thm:general-counting-AG} is proved by applying Kesten's renewal theorem
\cite[Theorem 2]{Kesten:AOP1974} to the Markov chain on $\widetilde{\Gamma}:=\{g\in\Gamma\mid\HM(g)=1\}$,
the \emph{``space of Euclidean shapes of $\{K(\mathscr{D})\}_{\mathscr{D}\in\TDT}$''},
with transition function $\mathscr{P}(g,\cdot):=\sum_{\tau\in I}\HM(gM_{\tau})\delta_{[gM_{\tau}]_{\Gamma}}$,
where for each $g\in\Gamma$ we set $[g]_{\Gamma}:=\HM(g)^{1/d_{\mathsf{AG}}}g\in\widetilde{\Gamma}$
and $\delta_{[g]_{\Gamma}}$ denotes the Borel probability measure on $\widetilde{\Gamma}$
with $\delta_{[g]_{\Gamma}}(\{[g]_{\Gamma}\})=1$; a brief sketch of the proof of
Theorem \ref{thm:general-counting-AG} can be found in \cite{K:WeylSurvErgodicTh},
and the full details will appear in \cite[Sections 3 and 7]{K:WeylAG}.
\begin{proof}[Sketch of the proof of Theorem \textup{\ref{thm:Weyl-AG}} under Theorem \textup{\ref{thm:general-counting-AG}}]
We define
$\mathscr{N}_{\mathscr{D},V}(\lambda):=\#\{n\in\mathbb{N}\mid\lambda^{\mathscr{D},V}_{n}\leq\lambda\}$,
$\mathscr{N}_{\mathscr{D}}(\lambda):=\mathscr{N}_{\mathscr{D},\emptyset}(\lambda)$ and
$\mathscr{N}_{\mathscr{D},0}(\lambda):=\mathscr{N}_{\mathscr{D},V_{0}(\mathscr{D})}(\lambda)$
for $\mathscr{D}\in\TDTo$, each finite subset $V$ of $V_{*}(\mathscr{D})$ and $\lambda\in[0,+\infty)$.
Then for any such $\mathscr{D},V,\lambda$, as noted in \cite[Theorem 4.1.7 and Corollary 4.1.8]{Kig:AOF},
we easily see from $\dim\mathcal{F}_{\mathscr{D}}/\mathcal{F}_{\mathscr{D},V}^{0}=\#V$ and \eqref{eq:min-max} that
$\lambda^{\mathscr{D},\emptyset}_{n}\leq\lambda^{\mathscr{D},V}_{n}\leq\lambda^{\mathscr{D},\emptyset}_{n+\#V}$
for any $n\in\mathbb{N}$ and thereby that
\begin{equation}\label{eq:eigenvalue-counting-DirichletBC-V}
\mathscr{N}_{\mathscr{D},V}(\lambda)\leq\mathscr{N}_{\mathscr{D}}(\lambda)
	\leq\mathscr{N}_{\mathscr{D},V}(\lambda)+\#V,
\end{equation}
so that it suffices to prove \eqref{eq:Weyl-AG} for $V=V_{0}(\mathscr{D})$, i.e.,
for $\mathscr{N}_{\mathscr{D},0}(\lambda)$.

To apply Theorem \ref{thm:general-counting-AG}, for each $n\in\mathbb{N}$ and each
$g=(\alpha,\beta,\gamma,\kappa)\in\Gamma^{\circ}$ we set
$\lambda_{n}(g):=(\lambda^{\mathscr{D},V_{0}(\mathscr{D})}_{n})^{1/2}$,
where we take any $\mathscr{D}=(D_{1},D_{2},D_{3})\in\TDTo$ with
$\bigl(\curv(D_{1}),\curv(D_{2}),\curv(D_{3})\bigr)=(\alpha,\beta,\gamma)$,
so that $\lambda_{n}(g)\geq\lambda_{1}(g)>0$ by
$\{u\in\mathcal{F}_{\mathscr{D}}\mid\mathcal{E}^{\mathscr{D}}(u,u)=0\}=\mathbb{R}\ind{}$
and $\lim_{n\to\infty}\lambda_{n}(g)=+\infty$ by Proposition \ref{prop:Laplacian-eigenvalues}.
We also easily see from Proposition \ref{prop:curvatures-Mw}, \eqref{eq:AG-volume-meas-identify},
\eqref{eq:AG-DF-identify-form} and \eqref{eq:min-max} that for any $n\in\mathbb{N}$,
$\lambda_{n}:\Gamma^{\circ}\to(0,+\infty)$ is continuous and satisfies
$\lambda_{n}(sg)=s\lambda_{n}(g)$ for any $(g,s)\in\Gamma^{\circ}\times(0,+\infty)$.

It remains to verify that $\{\lambda_{n}\}_{n\in\mathbb{N}}$ satisfies
\eqref{eq:general-counting-AG-remainder}. To this end, let
$\mathscr{D}=(D_{1},D_{2},D_{3})\in\TDTo$, $(\alpha,\beta,\gamma,\kappa)=:g$ be as in
Proposition \ref{prop:circumscribed-inscribed} and $\lambda\in(0,+\infty)$. Then since
$\#\{n\in\mathbb{N}\mid\lambda_{n}(gM_{w})\leq\lambda^{1/2}\}=\mathscr{N}_{\mathscr{D}_{w},0}(\lambda)$
for any $w\in W_{*}$ by Proposition \ref{prop:curvatures-Mw},
\eqref{eq:general-counting-AG-remainder} for $\{\lambda_{n}\}_{n\in\mathbb{N}}$
can be rephrased as
\begin{equation}\label{eq:eigenvalue-counting-AG-remainder}
\begin{split}
&\sum\nolimits_{\tau\in I}\mathscr{N}_{\mathscr{D}_{\tau},0}(\lambda)\leq\mathscr{N}_{\mathscr{D},0}(\lambda)\\
&\mspace{27mu}\leq\sum\nolimits_{\tau\in I}\mathscr{N}_{\mathscr{D}_{\tau},0}(\lambda)+c(\min\{\beta+\gamma,\gamma+\alpha,\alpha+\beta\})^{-\eta}\lambda^{\eta/2}+c,
\end{split}
\end{equation}
which can be shown with $\eta=1<d_{\mathsf{AG}}$ (recall Theorem \ref{thm:AG-Hausdorff-dim-bounds})
as follows. Set $c_{g}:=\min\{\beta+\gamma,\gamma+\alpha,\alpha+\beta\}$ and
$n_{\lambda}:=\min\{n\in\mathbb{N}\mid c_{g}^{2}n^{2}\geq 40\lambda\}$.
Then for any $n\in\mathbb{N}$ with $n\geq n_{\lambda}$, any $j\in S$ and
any $\tau\in I\cup\{j^{n_{\lambda}}\}$ with $\tau\leq j^{n_{\lambda}}$,
from \eqref{eq:AG-spectral-gap}, \eqref{eq:min-max} and \eqref{eq:M1nM2nM3n} we obtain
\begin{equation}\label{eq:eigenvalue-counting-AG-end-zero}
\mspace{-3mu}
\mathscr{N}_{\mathscr{D}_{\tau},0}(\lambda)=0\mspace{8mu}\textrm{by}\mspace{8mu}
\lambda^{\mathscr{D}_{\tau},V_{0}(\mathscr{D}_{\tau})}_{1}
	\geq\frac{\kappa(\mathscr{D}_{\tau})^{2}}{40}
	\geq\frac{\kappa(\mathscr{D}_{j^{n_{\lambda}}})^{2}}{40}
	>\frac{c_{g}^{2}n_{\lambda}^{2}}{40}\geq\lambda.\mspace{-5mu}
\end{equation}
On the other hand, setting $I_{\lambda}:=\{\tau\in I\mid|\tau|\leq n_{\lambda}\}\cup\{j^{n_{\lambda}}\mid j\in S\}$ and
$V_{\lambda}:=\bigcup_{\tau\in I_{\lambda}}V_{0}(\mathscr{D}_{\tau})$, we have
$K(\mathscr{D})\setminus V_{\lambda}
	=\bigcup_{\tau\in I_{\lambda}}(K(\mathscr{D}_{\tau})\setminus V_{0}(\mathscr{D}_{\tau}))$
with the union disjoint, which together with \eqref{eq:min-max} and
\eqref{eq:eigenvalue-counting-AG-end-zero} easily implies that
\begin{equation}\label{eq:eigenvalue-counting-AG-decomp-lambda}
\mathscr{N}_{\mathscr{D},V_{\lambda}}(\lambda)
	=\sum\nolimits_{\tau\in I_{\lambda}}\mathscr{N}_{\mathscr{D}_{\tau},0}(\lambda)
	=\sum\nolimits_{\tau\in I}\mathscr{N}_{\mathscr{D}_{\tau},0}(\lambda).
\end{equation}
Now \hspace*{-0.1pt}\eqref{eq:eigenvalue-counting-AG-remainder}\hspace*{-0.1pt} follows from
\hspace*{-0.1pt}\eqref{eq:eigenvalue-counting-AG-decomp-lambda}, $\#V_{\lambda}=9n_{\lambda}-3$ and the fact that
$\mathscr{N}_{\mathscr{D},V_{\lambda}}(\lambda)\leq\mathscr{N}_{\mathscr{D},0}(\lambda)
	\leq\mathscr{N}_{\mathscr{D},V_{\lambda}}(\lambda)+\#V_{\lambda}-3$
by the same proof as \eqref{eq:eigenvalue-counting-DirichletBC-V}.
Theorem \ref{thm:general-counting-AG} is thus applicable to $\{\lambda_{n}\}_{n\in\mathbb{N}}$
and yields \eqref{eq:general-counting-AG}, which means \eqref{eq:Weyl-AG}.
\end{proof}
%
\section{Kleinian groups with limit sets round Sierpi\'{n}ski carpets}\label{sec:WeylRSC}
In this last section, we illustrate the possibility of extending the results
in \S\ref{sec:AG-DF} and \S\ref{sec:Weyl-AG} to other circle packing fractals,
by presenting the results of the author's recent study in \cite{K:WeylRSC} obtained
as the initial step toward developing a rich theory of construction and analysis of
``geometrically canonical'' Laplacians on more general self-conformal fractals.

Let $\mob(\widehat{\mathbb{C}})$ denote the group of (orientation preserving or reversing)
M\"{o}bius transformations on $\widehat{\mathbb{C}}$. A discrete subgroup $G$ of
$\mob(\widehat{\mathbb{C}})$ is called a \emph{Kleinian group}%
\footnote{Kleinian groups are usually assumed to consist only of orientation preserving
elements, but here we allow them to contain orientation reversing ones.},
and the smallest closed subset $\partial_{\infty}G$ of $\widehat{\mathbb{C}}$
invariant with respect to the action of $G$ is called the \emph{limit set} of $G$.
It is known in the theory of Kleinian groups
(see, e.g., \cite{BullettMantica,KeenMaskitSeries,KeenSeries,Wright}) that the limit sets of certain classes
of Kleinian groups are circle packing fractals, and typical examples of such circle packing fractals
are provided in the book \cite{MSW:indra} together with a number of beautiful pictures of them.%
\begin{figure}[t]\centering\hspace*{-.046\linewidth}%
	\begin{minipage}{.680\linewidth}\captionsetup{width=\linewidth}\centering
		\includegraphics[height=115pt]{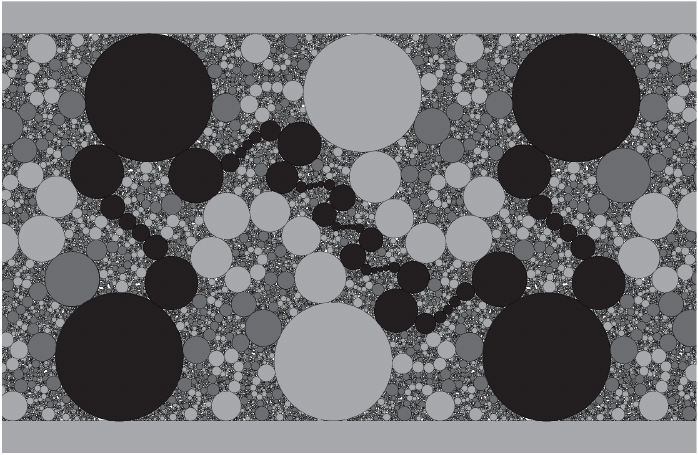}
		\caption{Limit set of $\frac{7}{43}$ double cusp group}\label{fig:7-43cusp}
	\end{minipage}\hspace*{-.070\linewidth}%
	\begin{minipage}{.460\linewidth}\captionsetup{width=\linewidth}\centering
		\includegraphics[height=115pt]{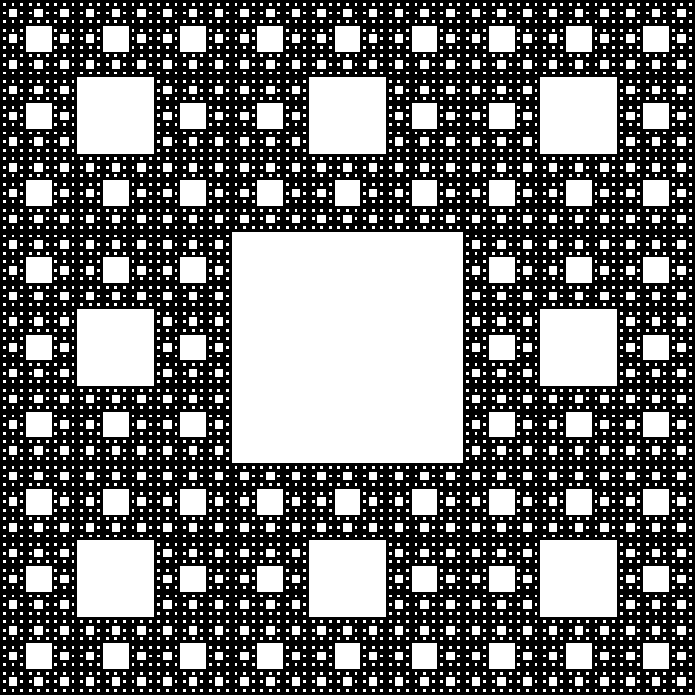}
		\caption{Sierpi\'{n}ski carpet}\label{fig:SC}
	\end{minipage}\hspace*{-.034\linewidth}%
\end{figure}

Since the expressions \eqref{eq:AG-volume-meas-identify} of $\mu^{\mathscr{D}}$
and \eqref{eq:AG-DF-identify-form} of the unique canonical Dirichlet form
$(\mathcal{E}^{\mathscr{D}},\mathcal{F}_{\mathscr{D}})$ on $K(\mathscr{D})$ makes
sense on a general circle packing fractal, (a candidate of) a ``geometrically canonical''
Laplacian on it can be defined by \eqref{eq:AG-volume-meas-identify} and
\eqref{eq:AG-DF-identify-form}, and it is natural to expect Weyl's eigenvalue
asymptotics to hold when the fractal has some nice self-conformal structure.
The author has recently verified this expectation in \cite{K:WeylDCGMaskit,K:WeylRSC} 
for the circle packing fractals arising as the limit sets of two specific classes of
Kleinian groups, one of which studied in \cite{K:WeylDCGMaskit} is the
\emph{double cusp groups} on the boundary of Maskit's embedding of the Teichm\"{u}ller
space of the once-punctured torus treated in detail in \cite{KeenSeries,MSW:indra,Wright}.
In this case, the limit sets (Figure \ref{fig:7-43cusp}) can be shown to admit a self-conformal
cellular decomposition similar to \eqref{eq:decomp-index-I} which is \emph{finitely ramified}
in the sense that any cell intersects the others only on boundedly many points, and
this property makes the proof of Weyl's asymptotics largely analogous to that of
Theorem \ref{thm:Weyl-AG}; a brief presentation of the precise statements of the results
can be found in \cite{K:MFO2016}, and the full details will be given in \cite{K:WeylDCGMaskit}.

On the other hand, each Kleinian group in the other class, which has been studied
in \cite{K:WeylRSC}, has as its limit set a \emph{round Sierpi\'{n}ski carpet}
(Figure \ref{fig:RSCs}), i.e., a subset of $\widehat{\mathbb{C}}$ homeomorphic to
the standard Sierpi\'{n}ski carpet (Figure \ref{fig:SC}) whose complement in
$\widehat{\mathbb{C}}$ consists of disjoint open disks in $\widehat{\mathbb{C}}$.
In particular, this limit set is \emph{infinitely ramified}, i.e., is not finitely
ramified regardless of the choice of a cellular decomposition, which prevents
the method of the above proof of \eqref{eq:eigenvalue-counting-AG-remainder} from
applying to it and thereby makes the proof of Weyl's asymptotics
for this case considerably more difficult.%
\begin{figure}[t]\captionsetup{width=\linewidth}\centering
\subcaptionbox{$\thirdangle=8$\label{fig:RSC8}}[.333\linewidth]{%
	\includegraphics[height=100pt]{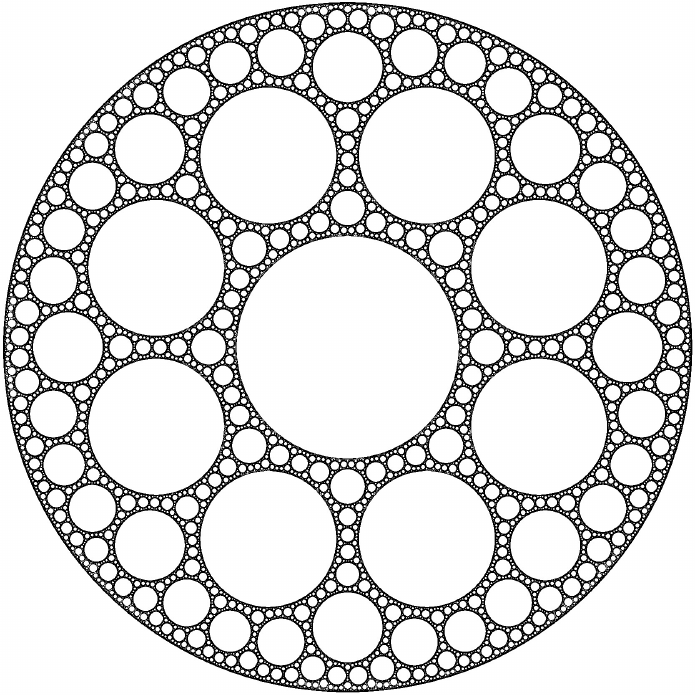}}%
\subcaptionbox{$\thirdangle=9$\label{fig:RSC9}}[.333\linewidth]{%
	\includegraphics[height=100pt]{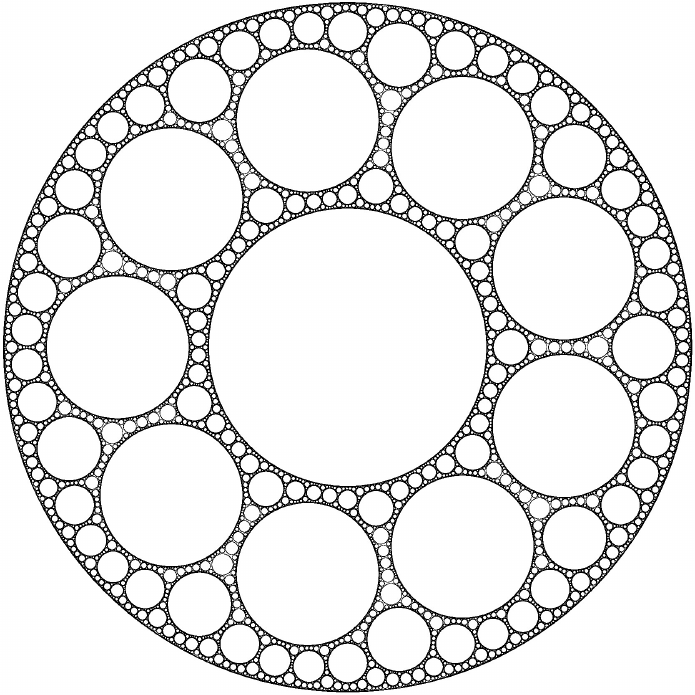}}%
\subcaptionbox{$\thirdangle=12$\label{fig:RSC12}}[.333\linewidth]{%
	\includegraphics[height=100pt]{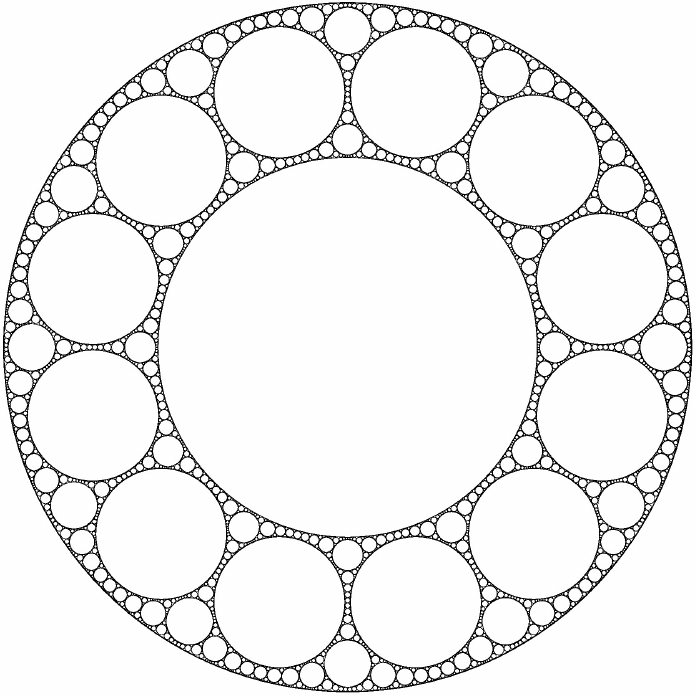}}%
\caption{The limit sets $\partial_{\infty}G_{\thirdangle}$ of the Kleinian groups $G_{\thirdangle}$}
\label{fig:RSCs}
\end{figure}

The rest of this section is devoted to a brief summary of the results in \cite{K:WeylRSC}
for the latter class of Kleinian groups, which are defined as follows.
Let $\thirdangle\in\mathbb{N}$ satisfy $\thirdangle>6$. It is a well-known fact from
hyperbolic geometry (see, e.g., \cite[Theorem 3.5.6]{Ratcliffe:Hyp3rd}) that by
$\frac{\pi}{2}+\frac{\pi}{3}+\frac{\pi}{\thirdangle}<\pi$ there exists a geodesic
triangle with inner angles $\frac{\pi}{2},\frac{\pi}{3},\frac{\pi}{\thirdangle}$,
unique up to hyperbolic isometry, in the Poincar\'{e} disk model
$\mathbb{D}:=\{z\in\mathbb{C}\mid|z|<1\}$ of the hyperbolic plane;
here we make the following specific choice of such one. The following construction
is a slight modification of that given in \cite{BullettMantica}.
\begin{dfn}\label{dfn:Gammaq-Gq}
\begin{itemize}[label=\textup{(1)},align=left,leftmargin=*,topsep=0pt,parsep=0pt,itemsep=0pt]
\item[\textup{(1)}]Set $\ell_{1}:=\mathbb{R}$,
	$\ell_{3}:=\{te^{i\pi/\thirdangle}\mid t\in\mathbb{R}\}$ and choose
	$t_{\thirdangle},s_{\thirdangle}\in(0,+\infty)$ so that
	$\ell_{2}:=\{z\in\mathbb{C}\mid|z-t_{\thirdangle}e^{i\pi/\thirdangle}|=s_{\thirdangle}\}$
	is orthogonal to $\partial\mathbb{D}$ and intersects $\ell_{1}$ with angle $\frac{\pi}{3}$;
	there is a unique such choice of $t_{\thirdangle},s_{\thirdangle}$
	by virtue of $\frac{\pi}{2}+\frac{\pi}{3}+\frac{\pi}{\thirdangle}<\pi$.
	The closed geodesic triangle in $\mathbb{D}$ formed by $\ell_{1},\ell_{2},\ell_{3}$
	is denoted by $\triangle_{\thirdangle}$, and the subgroup of $\mob(\widehat{\mathbb{C}})$
	generated by $\{\inv_{\ell_{k}}\}_{k=1}^{3}$ is denoted by $\Gamma_{\thirdangle}$,
	where $\inv_{\ell}$ denotes the inversion (reflection) in a circle
	or a straight line $\ell\subset\mathbb{C}$.
\item[\textup{(2)}]Choose $r_{\thirdangle}\in(0,1)$ so that
	$\ell_{4}:=\{z\in\mathbb{C}\mid|z|=r_{\thirdangle}\}$ intersects $\ell_{2}$
	with angle $\frac{\pi}{3}$; it is easy to see that there is a unique such choice
	of $r_{\thirdangle}$. The subgroup of $\mob(\widehat{\mathbb{C}})$ generated by
	$\{\inv_{\ell_{k}}\}_{k=1}^{4}$ is denoted by $G_{\thirdangle}$.
\end{itemize}
\end{dfn}
%
\begin{figure}[t]\captionsetup{width=\linewidth}\centering
\subcaptionbox{Inversion circles $\{\ell_{k}\}_{k}$\label{fig:RSC8-InvCircS}}[.333\linewidth]{%
	\includegraphics[height=100pt]{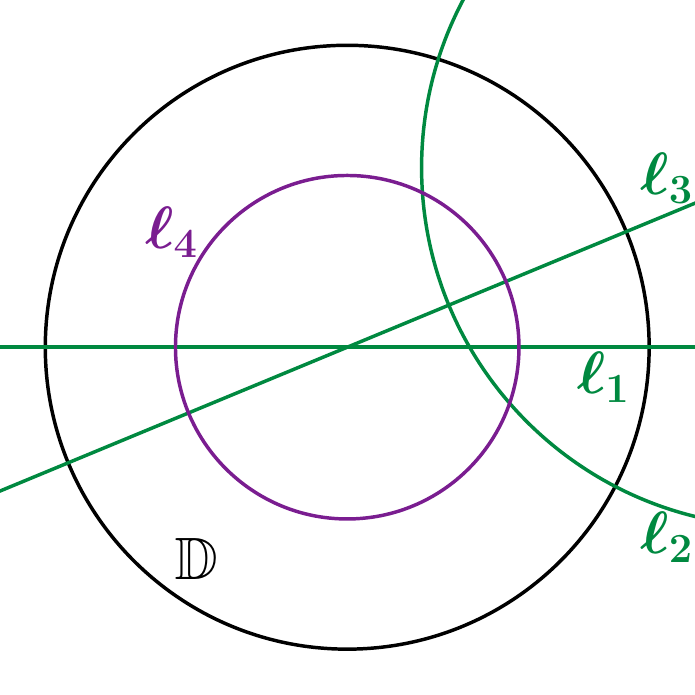}}%
\subcaptionbox{Tessellation by $\Gamma_{8}$\label{fig:RSC8-tessellationS}}[.333\linewidth]{%
	\includegraphics[height=100pt]{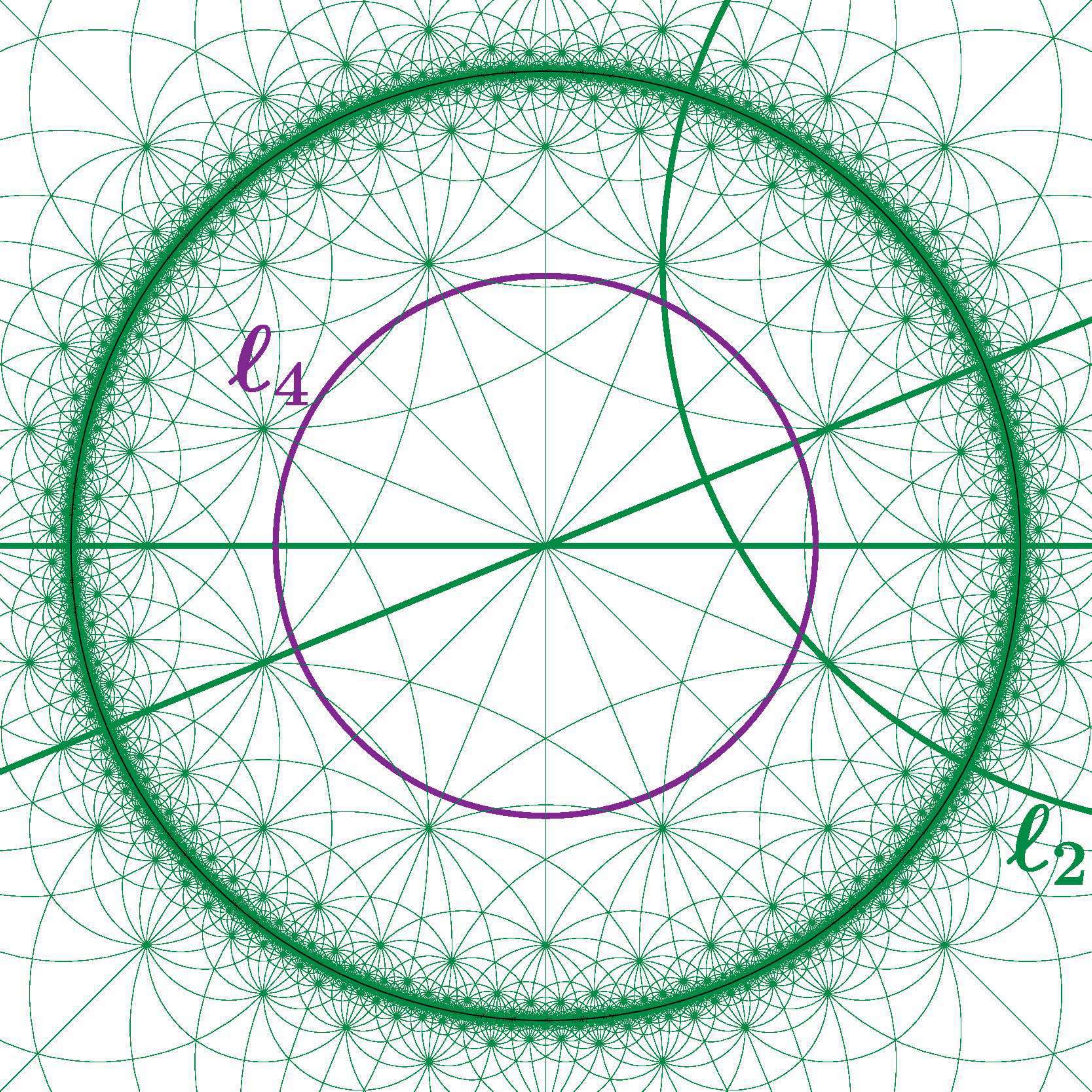}}%
\subcaptionbox{Construction of $\partial_{\infty}G_{8}$\label{fig:RSC8-1stLevel}}[.333\linewidth]{%
	\includegraphics[height=100pt]{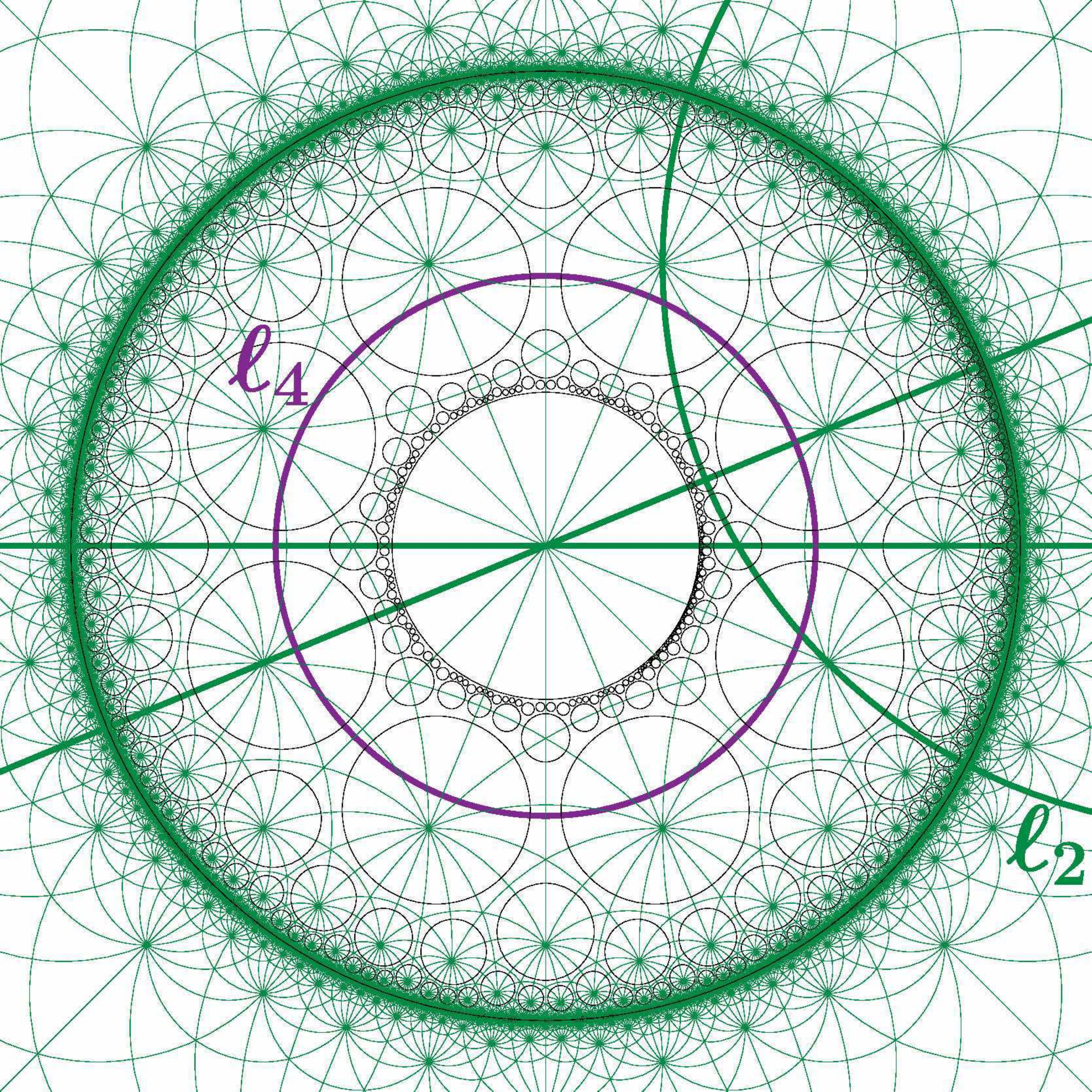}}%
\caption{Illustration of Definition \ref{dfn:Gammaq-Gq} and Proposition \ref{prop:Gammaq-Gq}: $\Gamma_{8},\partial_{\infty}G_{8}$}
\label{fig:RSC8-construct}
\end{figure}
%
\begin{prop}\label{prop:Gammaq-Gq}
\begin{itemize}[label=\textup{(1)},align=left,leftmargin=*,topsep=0pt,parsep=0pt,itemsep=0pt]
\item[\textup{(1)}]$\mathbb{D}=\bigcup_{\tau\in\Gamma_{\thirdangle}}\tau(\triangle_{\thirdangle})$
	and $\tau(\interior\triangle_{\thirdangle})\cap\upsilon(\triangle_{\thirdangle})=\emptyset$
	for any $\tau,\upsilon\in\Gamma_{\thirdangle}$ with $\tau\not=\upsilon$.
\item[\textup{(2)}]$G_{\thirdangle}$ is a Kleinian group,
	$\partial_{\infty}G_{\thirdangle}=\overline{\bigcup_{\tau\in G_{\thirdangle}}\tau(\partial\mathbb{D})}
		=\widehat{\mathbb{C}}\setminus\bigcup_{\tau\in G_{\thirdangle}}\tau(\widehat{\mathbb{C}}\setminus\overline{\mathbb{D}})$,
	$D_{1}\cap D_{2}=\emptyset$ for any
	$D_{1},D_{2}\in\bigl\{\tau(\widehat{\mathbb{C}}\setminus\mathbb{D})\bigm|\tau\in G_{\thirdangle}\bigr\}$
	with $D_{1}\not=D_{2}$, and $\interior\partial_{\infty}G_{\thirdangle}=\emptyset$.
	In particular, $\partial_{\infty}G_{\thirdangle}$ is a round Sierpi\'{n}ski carpet.
\end{itemize}
\end{prop}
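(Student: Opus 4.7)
My plan is to prove both parts as consequences of Poincar\'{e}'s fundamental polyhedron theorem, applied in the hyperbolic plane $\mathbb{D}$ for part (1) and in hyperbolic $3$-space $\mathbb{H}^{3}$ (whose ideal boundary is $\widehat{\mathbb{C}}$) for part (2). For (1), the geodesic triangle $\triangle_{\thirdangle}$ has inner angles $\frac{\pi}{2},\frac{\pi}{3},\frac{\pi}{\thirdangle}$, each a submultiple of $\pi$, and the angle-sum inequality $\frac{1}{2}+\frac{1}{3}+\frac{1}{\thirdangle}<1$ is ensured by $\thirdangle>6$. Poincar\'{e}'s polygon theorem for hyperbolic reflection groups (see, e.g., \cite{Ratcliffe:Hyp3rd}) then yields at once that $\Gamma_{\thirdangle}$ is a discrete subgroup of $\mob(\widehat{\mathbb{C}})$ preserving $\mathbb{D}$ and that $\triangle_{\thirdangle}$ is a fundamental domain for its action on $\mathbb{D}$, which is precisely the content of the two displayed assertions of (1).

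For part (2), I would first record the intersection angles of the four circles: $\ell_{1}\cap\ell_{2}$ at $\frac{\pi}{3}$, $\ell_{2}\cap\ell_{3}$ at $\frac{\pi}{2}$, and $\ell_{3}\cap\ell_{1}$ (at the origin) at $\frac{\pi}{\thirdangle}$ from the triangle; $\ell_{4}\cap\ell_{1}$ and $\ell_{4}\cap\ell_{3}$ at $\frac{\pi}{2}$, because $\ell_{1},\ell_{3}$ pass through the center $0$ of $\ell_{4}$; and $\ell_{4}\cap\ell_{2}$ at $\frac{\pi}{3}$ by the choice of $r_{\thirdangle}$. Each $\inv_{\ell_{k}}$ extends to a reflection of $\mathbb{H}^{3}$ across the hyperbolic plane $P_{k}$ whose ideal boundary is $\ell_{k}$, and the dihedral angles of the convex hyperbolic polyhedron bounded by $P_{1},\ldots,P_{4}$ coincide with the planar angles above, all submultiples of $\pi$. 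Applying Poincar\'{e}'s polyhedron theorem in $\mathbb{H}^{3}$ then yields that $G_{\thirdangle}$ is a discrete subgroup of $\mob(\widehat{\mathbb{C}})$ (hence a Kleinian group) with this convex polyhedron as a fundamental polyhedron.

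With $G_{\thirdangle}$ known to be discrete, I would set $\Omega:=\bigcup_{\tau\in G_{\thirdangle}}\tau(\widehat{\mathbb{C}}\setminus\overline{\mathbb{D}})$ and identify $\Omega$ as the domain of discontinuity of $G_{\thirdangle}$. The fundamental-polyhedron property implies (i) that distinct members of $\{\tau(\widehat{\mathbb{C}}\setminus\mathbb{D})\}_{\tau\in G_{\thirdangle}}$ (indexed by cosets $G_{\thirdangle}/\Gamma_{\thirdangle}$, with $\Gamma_{\thirdangle}$ the stabilizer of $\mathbb{D}$) are pairwise disjoint, because two such closed translates overlapping in their interiors would force two distinct $G_{\thirdangle}$-translates of the fundamental polyhedron to overlap in $\mathbb{H}^{3}$, and (ii) that $G_{\thirdangle}$ acts properly discontinuously on $\Omega$. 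The standard identification of the limit set with $\widehat{\mathbb{C}}\setminus\Omega$ then gives the second displayed description of $\partial_{\infty}G_{\thirdangle}$; the first follows because each $\tau(\partial\mathbb{D})$ is contained in $\partial_{\infty}G_{\thirdangle}$ as the common boundary of two adjacent complementary closed disks in the tiling, and because the Euclidean diameters of the disks $\tau(\widehat{\mathbb{C}}\setminus\overline{\mathbb{D}})$ tend to zero along any escaping sequence in $G_{\thirdangle}$, so their boundary circles accumulate onto every limit point. The same diameter decay makes $\Omega$ dense in $\widehat{\mathbb{C}}$, giving $\interior\partial_{\infty}G_{\thirdangle}=\emptyset$.

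To conclude that $\partial_{\infty}G_{\thirdangle}$ is a round Sierpi\'{n}ski carpet, I would invoke Whyburn's topological characterization: a compact, connected, locally connected subset of $S^{2}$ with empty interior whose complementary components are Jordan domains with pairwise disjoint boundaries and diameters tending to zero is homeomorphic to the standard Sierpi\'{n}ski carpet. Connectedness and local connectedness of $\partial_{\infty}G_{\thirdangle}$ follow from the standard theory of geometrically finite Kleinian groups whose fundamental polyhedron is connected with all dihedral angles of the form $\pi/n$, while the remaining topological hypotheses are exactly what (i) and the diameter decay above establish. The main difficulty, in my view, lies in the careful verification of the hypotheses of Poincar\'{e}'s polyhedron theorem (the cycle conditions on dihedral angles and any cusp conditions at parabolic vertices) and in quantifying the uniform Euclidean shrinking of the translates $\tau(\widehat{\mathbb{C}}\setminus\overline{\mathbb{D}})$; once these are in hand, both the disjointness of complementary disks and the Sierpi\'{n}ski-carpet conclusion follow smoothly.
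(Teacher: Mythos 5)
Your proof follows essentially the same route as the paper's: Poincar\'{e}'s polygon theorem for (1), Poincar\'{e}'s polyhedron theorem applied in $\mathbb{H}^{3}$ to the Coxeter polyhedron bounded by the planes/hemispheres over $\ell_{1},\ldots,\ell_{4}$ for the discreteness and tiling in (2), and Whyburn's topological characterization for the Sierpi\'{n}ski-carpet conclusion. The only cosmetic difference is that the paper obtains $\interior\partial_{\infty}G_{\thirdangle}=\emptyset$ by citing a general theorem on limit sets (\cite[Theorem 12.2.7]{Ratcliffe:Hyp3rd}) rather than via the diameter-decay and density argument you sketch, which is a fine alternative.
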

\begin{proof}
(1) is immediate from \emph{Poincar\'{e}'s polygon theorem}
(see, e.g., \cite[Theorem 7.1.3]{Ratcliffe:Hyp3rd}), which applies to
$\triangle_{\thirdangle}$ since any of its inner angles is a \emph{submultiple}
of $\pi$, i.e., of the form $\pi/n$ for some $n\in\mathbb{N}\cup\{+\infty\}$.

For (2), recall (see, e.g., \cite[Sections 4.4--4.6]{Ratcliffe:Hyp3rd}) that
$\mob(\widehat{\mathbb{C}})$ is canonically isomorphic to the group
of isometries of the upper half-space model $\mathbb{H}^{3}:=\mathbb{C}\times(0,+\infty)$
of the three-dimensional hyperbolic space, where the inversion $\inv_{\ell}$ in
a circle or a straight line $\ell\subset\mathbb{C}$ corresponds to the inversion in the
sphere or the plane $\widetilde{\ell}$ intersecting $\mathbb{C}$ orthogonally on $\ell$.
Then since the closed polyhedron $\triangle_{\thirdangle}^{3}$ in $\mathbb{H}^{3}$
formed by $\{\widetilde{\ell_{k}}\}_{k=1}^{4}$, defined as the part of
$\{re^{i\theta}\mid(r,\theta)\in[0,+\infty)\times[0,\frac{\pi}{q}]\}\times(0,+\infty)$
above $\widetilde{\ell_{2}}$ and $\widetilde{\ell_{4}}$, has only submultiples of $\pi$ as
the dihedral angles between its faces, by \emph{Poincar\'{e}'s polyhedron theorem}
(see, e.g., \cite[Theorem 13.5.2]{Ratcliffe:Hyp3rd}) applied to $\triangle_{\thirdangle}^{3}$
we have $\mathbb{H}^{3}=\bigcup_{\tau\in G_{\thirdangle}}\tau(\triangle_{\thirdangle}^{3})$ and
$\tau(\interior_{\mathbb{H}^{3}}\triangle_{\thirdangle}^{3})\cap\upsilon(\triangle_{\thirdangle}^{3})=\emptyset$
for any $\tau,\upsilon\in G_{\thirdangle}$ with $\tau\not=\upsilon$.
Now we can obtain the first three assertions from this fact,
$\interior\partial_{\infty}G_{\thirdangle}=\emptyset$ from \cite[Theorem 12.2.7]{Ratcliffe:Hyp3rd},
and the last one from the topological characterization of the
Sierpi\'{n}ski carpet in \cite{Whyburn:SC}.
\end{proof}

Even though in Definition \ref{dfn:Gammaq-Gq} we have specifically chosen the unit disk
$\mathbb{D}$ and the geodesic triangle $\triangle_{\thirdangle}$, a particular choice
of a disk $D$ in $\mathbb{C}$ and a geodesic triangle in $D$ should not matter for the
desired Laplacian eigenvalue asymptotics. We should note also that the expressions
\eqref{eq:AG-volume-meas-identify} and \eqref{eq:AG-DF-identify-form} do not make
perfect sense for the family $\{\tau(\partial\mathbb{D})\mid\tau\in G_{\thirdangle}\}$ of
circles constituting $\partial_{\infty}G_{\thirdangle}$, since $\partial\mathbb{D}$ should
be treated together with the part $\widehat{\mathbb{C}}\setminus\overline{\mathbb{D}}$ of
$\widehat{\mathbb{C}}\setminus\partial_{\infty}G_{\thirdangle}$ enclosed by $\partial\mathbb{D}$
and thereby considered to be of \emph{infinite} area and radius, which is incompatible
with \eqref{eq:AG-volume-meas-identify} and \eqref{eq:AG-DF-identify-form}.
To take care of these issues, we introduce the following definition.
\begin{dfn}\label{dfn:RSC-interior-shapes}
We define
$\mathcal{G}:=\bigl\{g\in\mob(\widehat{\mathbb{C}})\bigm|g^{-1}(\infty)\in\widehat{\mathbb{C}}\setminus\overline{\mathbb{D}}\bigr\}$,
and for each $g\in\mathcal{G}$ we set
$\mathscr{D}_{g}:=\bigl\{g\tau(\widehat{\mathbb{C}}\setminus\overline{\mathbb{D}})\bigm|\tau\in G_{\thirdangle}\bigr\}\setminus\bigl\{g(\widehat{\mathbb{C}}\setminus\overline{\mathbb{D}})\bigr\}$
and $K_{g}:=g(\mathbb{D}\cap\partial_{\infty}G_{\thirdangle})=g(\mathbb{D})\setminus\bigcup_{D\in\mathscr{D}_{g}}D$,
so that $\mathscr{D}_{g}$ is a family of open disks in $\mathbb{C}$ and
$\overline{D_{1}}\subset g(\mathbb{D})\setminus\overline{D_{2}}$
for any $D_{1},D_{2}\in\mathscr{D}_{g}$ with $D_{1}\not=D_{2}$.
\end{dfn}
\begin{dfn}[\cite{K:WeylRSC}]\label{dfn:Dirichlet-form-RSC}
Let $g\in\mathcal{G}$.
We define a linear subspace $\mathcal{C}_{g}$ of $\mathcal{C}_{\mathrm{c}}(K_{g})$ by
$\mathcal{C}_{g}:=\{u\in\mathcal{C}_{\mathrm{c}}(K_{g})\mid\lip_{K_{g}}u<+\infty\}$,
and also define a \emph{finite} Borel measure $\mu^{g}$ on $K_{g}$ and a bilinear form
$\mathcal{E}^{g}:\mathcal{C}_{g}\times\mathcal{C}_{g}\to\mathbb{R}$ on $\mathcal{C}_{g}$ by
\begin{gather}\label{eq:measure-RSC}
\mu^{g}:=\sum\nolimits_{D\in\mathscr{D}_{g}}\rad(D)\mathscr{H}^{1}_{\partial D}(\cdot\cap\partial D),\\
\mathcal{E}^{g}(u,v):=\sum\nolimits_{D\in\mathscr{D}_{g}}
	\int_{\partial D}\langle\nabla_{\partial D}(u|_{\partial D}),\nabla_{\partial D}(v|_{\partial D})\rangle\rad(D)\,d\mathscr{H}^{1}_{\partial D}.
\label{eq:form-RSC}
\end{gather}
\end{dfn}
\begin{prop}[\cite{K:WeylRSC}]\label{prop:Dirichlet-form-RSC}
Let $g\in\mathcal{G}$. Then $(\mathcal{E}^{g},\mathcal{C}_{g})$ is closable in $L^{2}(K_{g},\mu^{g})$
and its smallest closed extension $(\mathcal{E}^{g},\mathcal{F}_{g})$ in $L^{2}(K_{g},\mu^{g})$
is a strongly local, regular symmetric Dirichlet form on $L^{2}(K_{g},\mu^{g})$.
Further, the inclusion map $\mathcal{F}_{g}\hookrightarrow L^{2}(K_{g},\mu^{g})$
is a compact linear operator under the norm
$\|u\|_{\mathcal{F}_{g}}:=(\mathcal{E}^{g}(u,u)+\int_{K_{g}}u^{2}\,d\mu^{g})^{1/2}$
on $\mathcal{F}_{g}$.
\end{prop}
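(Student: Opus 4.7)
The plan is to adapt the arguments behind Theorems \ref{thm:AG-DF-regular} and \ref{thm:AG-DF-identify} to the present setting, where the role of the circular arcs $C\in\mathscr{A}_{\mathscr{D}}$ is now played by the full boundary circles $\partial D$ for $D\in\mathscr{D}_{g}$. First, for each such $D$, the one-dimensional bilinear form $\mathcal{E}^{g}_{D}(u,u):=\int_{\partial D}|\nabla_{\partial D}(u|_{\partial D})|^{2}\rad(D)\,d\mathscr{H}^{1}_{\partial D}$ with domain $W^{1,2}(\partial D)$ is a standard closed symmetric Dirichlet form on $L^{2}(\partial D,\rad(D)\,d\mathscr{H}^{1}_{\partial D})$. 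Because the circles $\{\partial D\}_{D\in\mathscr{D}_{g}}$ meet each other in at most single points, which are $\mu^{g}$-null, the direct-sum form $\widetilde{\mathcal{E}}^{g}:=\sum_{D\in\mathscr{D}_{g}}\mathcal{E}^{g}_{D}$ with natural domain $\widetilde{\mathcal{F}}_{g}:=\bigl\{u\in L^{2}(K_{g},\mu^{g})\bigm|u|_{\partial D}\in W^{1,2}(\partial D)\ \forall D,\ \widetilde{\mathcal{E}}^{g}(u,u)<+\infty\bigr\}$ is closed on $L^{2}(K_{g},\mu^{g})$ as a Hilbert space direct sum of closed forms. The form $(\mathcal{E}^{g},\mathcal{C}_{g})$ is then a densely-defined restriction of $(\widetilde{\mathcal{E}}^{g},\widetilde{\mathcal{F}}_{g})$ and hence closable, with $(\mathcal{E}^{g},\mathcal{F}_{g})$ a subform of $(\widetilde{\mathcal{E}}^{g},\widetilde{\mathcal{F}}_{g})$.

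The Markov and strong-locality properties then descend from the corresponding properties of the 1-D forms: each $\mathcal{E}^{g}_{D}$ is invariant under unit contractions and vanishes on pairs $(u,v)\in\mathcal{C}_{g}\times\mathcal{C}_{g}$ with $v$ constant on a $\partial D$-neighborhood of $\supp_{K_{g}}[u]\cap\partial D$. For regularity, $K_{g}$ is a compact metric space on which Lipschitz functions are uniformly dense in $\mathcal{C}(K_{g})=\mathcal{C}_{\mathrm{c}}(K_{g})$ by standard approximation arguments, so $\mathcal{C}_{g}$ is a uniformly dense subalgebra of $\mathcal{C}_{\mathrm{c}}(K_{g})$ and is form-norm dense in $\mathcal{F}_{g}$ by definition of the closure.

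For the compactness claim I would follow the pattern of Lemmas \ref{lem:circular-arc-extension} and \ref{lem:AG-extension}. Define a cone extension $\mathcal{I}_{g}u:g(\overline{\mathbb{D}})\to\mathbb{R}$ of $u\in\mathcal{C}_{g}$ by $\mathcal{I}_{g}u|_{K_{g}}:=u$ and, for each $D\in\mathscr{D}_{g}$,
\begin{equation*}
(\mathcal{I}_{g}u)\bigl((1-t)\cent(D)+tz\bigr):=(1-t)\overline{u|_{\partial D}}^{\partial D}+tu(z),\qquad(t,z)\in[0,1]\times\partial D,
\end{equation*}
where $\overline{u|_{\partial D}}^{\partial D}$ is the mean value of $u$ on $\partial D$. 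A full-circle analog of Lemma \ref{lem:circular-arc-extension}, proved by essentially the same one-variable calculation combined with the Poincar\'{e} inequality on the circle, yields constants $c_{1},c_{2},c_{3},c_{4}\in(0,+\infty)$ independent of $D$ and $u$ such that
\begin{align*}
c_{1}\mathcal{E}^{g}(u,u)&\leq\int_{g(\mathbb{D})}|\nabla\mathcal{I}_{g}u|^{2}\,d\vol_{2}\leq c_{2}\mathcal{E}^{g}(u,u),\\
c_{3}\int_{K_{g}}u^{2}\,d\mu^{g}&\leq\int_{g(\mathbb{D})}|\mathcal{I}_{g}u|^{2}\,d\vol_{2}\leq c_{4}\int_{K_{g}}u^{2}\,d\mu^{g},
\end{align*}
where the upper bounds rely on $\sum_{D\in\mathscr{D}_{g}}\vol_{2}(D)\leq\vol_{2}(g(\mathbb{D}))<+\infty$ and $\vol_{2}(K_{g})=0$. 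Given a sequence $(u_{n})_{n\in\mathbb{N}}$ bounded in $\|\cdot\|_{\mathcal{F}_{g}}$, the sequence $(\mathcal{I}_{g}u_{n})_{n}$ is bounded in $W^{1,2}(g(\mathbb{D}))$, so the classical Rellich--Kondrachov theorem on the bounded Lipschitz domain $g(\mathbb{D})$ gives a subsequence converging in $L^{2}(g(\mathbb{D}),\vol_{2})$; the lower $L^{2}$-estimate then forces the corresponding subsequence of $(u_{n})_{n}$ to be Cauchy, hence convergent, in $L^{2}(K_{g},\mu^{g})$, establishing the compactness of $\mathcal{F}_{g}\hookrightarrow L^{2}(K_{g},\mu^{g})$.

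The main obstacle I expect is this final compactness step, because $\mathscr{D}_{g}$ is infinite with disks accumulating on $K_{g}$: one must verify that the piecewise-defined extension $\mathcal{I}_{g}u$ is genuinely in $W^{1,2}(g(\mathbb{D}))$ (not merely in $W^{1,2}(g(\mathbb{D})\setminus K_{g})$), so that no singular contribution to $\nabla\mathcal{I}_{g}u$ arises from crossing the $\vol_{2}$-null set $K_{g}$. For $u\in\mathcal{C}_{g}$ this follows from continuity of $u$ at the tangency points together with the choice of circular averages as cone apex values, and the general case $u\in\mathcal{F}_{g}$ is then obtained by form-norm approximation. A secondary delicate point is that the reference value at interior disks must be the mean $\overline{u|_{\partial D}}^{\partial D}$ rather than $0$ in order to obtain the lower bound $c_{1}\mathcal{E}^{g}(u,u)\leq\int|\nabla\mathcal{I}_{g}u|^{2}\,d\vol_{2}$ with constant $c_{1}$ uniform in $D$, via the 1-D Poincar\'{e} inequality with zero mean on the circle.
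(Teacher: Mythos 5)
The survey itself gives no proof of this proposition: it is quoted from \cite{K:WeylRSC} (in preparation), and the only guide the paper offers is the Apollonian-gasket analogue (Lemmas \ref{lem:circular-arc-extension} and \ref{lem:AG-extension} and the sketch of the proof of Theorem \ref{thm:AG-DF-regular}). Your proposal transplants exactly that strategy --- direct-sum closability over the one-dimensional forms on the circles, cone extension into the complementary disks with the circle average as apex value, two-sided comparison of energy and $L^{2}$-norm via the Wirtinger inequality, and Rellich--Kondrachov on $g(\mathbb{D})$ --- and in substance this is the right (and evidently the intended) argument. Two points, however, need correcting. First, $K_{g}=g(\mathbb{D})\setminus\bigcup_{D\in\mathscr{D}_{g}}D$ is \emph{not} compact: it omits the outer circle $g(\partial\mathbb{D})$, on which points of $K_{g}$ accumulate, which is precisely why the statement works with $\mathcal{C}_{\mathrm{c}}(K_{g})$ and why Theorem \ref{thm:WeylRSC} requires $\overline{U}\subset g(\mathbb{D})$. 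So $\mathcal{C}(K_{g})\neq\mathcal{C}_{\mathrm{c}}(K_{g})$; regularity must instead be argued by the uniform density of compactly supported Lipschitz functions in $\mathcal{C}_{\mathrm{c}}$ of a locally compact separable metric space (the conclusion survives, but your justification does not). Second, you have the roles of the two estimates reversed: in polar coordinates one computes $\int_{D}|\nabla\mathcal{I}u|^{2}\,d\vol_{2}=\frac{1}{2}\int_{0}^{2\pi}\bigl(f'(\theta)^{2}+(f(\theta)-a)^{2}\bigr)\,d\theta$ with $f(\theta):=u(\cent(D)+\rad(D)e^{i\theta})$, so the lower bound by $\frac{1}{2}\mathcal{E}^{g}_{D}(u,u)$ holds for \emph{every} $a$, and it is the \emph{upper} bound $\int_{D}|\nabla\mathcal{I}u|^{2}\,d\vol_{2}\leq\mathcal{E}^{g}_{D}(u,u)$ that forces $a$ to be the mean, via the zero-mean Poincar\'{e} inequality on the circle.

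One simplification you are entitled to: by Definition \ref{dfn:RSC-interior-shapes} the closed disks in $\mathscr{D}_{g}$ are pairwise disjoint (indeed uniformly relatively separated, \eqref{eq:uniform-relative-separation-RSC}), so there are no tangency points at all; your ``main obstacle'' reduces to checking that the piecewise cone extension of a compactly supported Lipschitz $u$ is globally Lipschitz on $g(\overline{\mathbb{D}})$ (restrict to a segment between two points: it meets the null set $K_{g}$ in a closed set whose complementary intervals each lie in a single $D$, and the Lipschitz constants of the cone extensions are bounded by a universal multiple of $\lip_{K_{g}}u$), whence $\mathcal{I}_{g}u\in W^{1,\infty}\subset W^{1,2}(g(\mathbb{D}))$ with no singular contribution on $K_{g}$. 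With these repairs the argument goes through.
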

\begin{prop}[\cite{K:WeylRSC}]\label{prop:harmonic-RSC}
Let $g\in\mathcal{G}$. Then any $h\in\{h_{1}|_{K_{g}},h_{2}|_{K_{g}}\}$ is
$\mathcal{E}^{g}$-harmonic on $K_{g}$, i.e., $\mathcal{E}^{g}(h,v)=0$ for any $v\in\mathcal{C}_{g}$,
with $\mathcal{E}^{g}(h,v)$ still defined by \eqref{eq:form-RSC}.
\end{prop}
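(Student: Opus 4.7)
The plan is to exploit two features of the globally defined functions $h_1,h_2:\mathbb{C}\to\mathbb{R}$ whose restrictions to $K_g$ are the $h$ in the statement. First, $h_1$ and $h_2$ are affine and hence globally harmonic on $\mathbb{C}$, so $\Delta h\equiv 0$. Second, on any Euclidean disk $D\subset\mathbb{C}$ of radius $r_D$, the restriction $h|_{\partial D}$ satisfies the pointwise identity $r_D\Delta_{\partial D}h=-\partial h/\partial n_D$, where $n_D$ is the outward unit normal to $\partial D$ and $\Delta_{\partial D}$ is the intrinsic Laplacian on the circle. This follows from the polar-coordinate form $\Delta h=\partial_r^{2}h+r^{-1}\partial_r h+r^{-2}\partial_\theta^{2}h=0$ together with $\partial_r^{2}h\equiv 0$ for affine $h$, noting that $\partial_r h=\partial h/\partial n_D$ on $\partial D$ and $r_D^{-2}\partial_\theta^{2}h=\Delta_{\partial D}h$.

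Given $v\in\mathcal{C}_g$, I would first extend $v$ to a Lipschitz function $\tilde v:\mathbb{C}\to\mathbb{R}$ with compact support inside $g(\mathbb{D})$ and $\tilde v|_{K_g}=v$. Since $\supp_{K_g}[v]$ is by hypothesis compact in $K_g$ and $K_g$ is locally closed in $\mathbb{C}$, this support is a compact subset of $\mathbb{C}$ lying at strictly positive distance from $\partial g(\mathbb{D})$; thus $\tilde v$ can be constructed by composing a McShane Lipschitz extension of $v$ from $K_g$ to $\mathbb{C}$ with a Lipschitz cutoff equal to $1$ on $\supp_{K_g}[v]$ and supported compactly in $g(\mathbb{D})$.

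The main computation is a three-step integration-by-parts chain. For each $D\in\mathscr{D}_g$, integrating by parts on the closed curve $\partial D$ and invoking the identity above gives
\begin{equation*}
\int_{\partial D}\langle\nabla_{\partial D}h,\nabla_{\partial D}v\rangle\rad(D)\,d\mathscr{H}^{1}_{\partial D}
=-\int_{\partial D}r_D\Delta_{\partial D}h\cdot v\,d\mathscr{H}^{1}_{\partial D}
=\int_{\partial D}\tfrac{\partial h}{\partial n_D}\,v\,d\mathscr{H}^{1}_{\partial D};
\end{equation*}
then Green's identity on $D$, using $\Delta h\equiv 0$ and $\tilde v|_{\partial D}=v|_{\partial D}$, rewrites the right-hand side as $\int_D\langle\nabla h,\nabla\tilde v\rangle\,d\vol_2$. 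Summing over $D\in\mathscr{D}_g$, using $\vol_2(K_g)=0$ to identify $\bigcup_D D$ with $g(\mathbb{D})$ up to a $\vol_2$-null set, and finally applying Green's identity on $g(\mathbb{D})$ with $\Delta h\equiv 0$ and $\tilde v$ vanishing in a neighborhood of $\partial g(\mathbb{D})$ produces
\begin{equation*}
\mathcal{E}^{g}(h,v)=\int_{g(\mathbb{D})}\langle\nabla h,\nabla\tilde v\rangle\,d\vol_2=0.
\end{equation*}

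The main obstacle will be controlling the interchange of the (a priori infinite) sum $\sum_{D\in\mathscr{D}_g}$ with the integrations, and verifying that the extension $\tilde v$ is regular enough to permit all the integration-by-parts formulas. Absolute convergence of $\sum_D\int_D|\nabla h||\nabla\tilde v|\,d\vol_2$ follows from the uniform boundedness of $|\nabla h|$ and $\lip_{\mathbb{C}}\tilde v$ together with $\sum_{D\in\mathscr{D}_g}\vol_2(D)\leq\vol_2(g(\mathbb{D}))<+\infty$. The technical verification that the McShane-plus-cutoff construction provides a globally Lipschitz extension whose gradient is well-behaved across the infinitely many boundary circles $\partial D$ is analogous to Lemma \ref{lem:AG-extension} from the Apollonian setting and constitutes the most delicate ingredient.
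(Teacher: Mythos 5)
Your proposal is correct and follows essentially the same route as the paper, whose proof is exactly an explicit Gauss--Green computation hinging on the circularity of each $\partial D$ (your identity $r_D\Delta_{\partial D}h=-\partial h/\partial n_D$ for affine $h$ is the precise point where the weight $\rad(D)$ in \eqref{eq:form-RSC} is used, and it checks out). The remaining steps---Lipschitz extension of $v$, term-by-term Green's identity on the disjoint disks, absolute convergence via $\sum_{D}\vol_{2}(D)\leq\vol_{2}(g(\mathbb{D}))$, and $\vol_{2}(K_{g})=0$---are all sound.
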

\begin{proof}
This follows easily by explicit calculations using the Gauss--Green theorem
and the fact that $\partial D$ is a circle for any $D\in\mathscr{D}_{g}$.
\end{proof}

The following is the main result of \cite{K:WeylRSC}.
Note that for any $g\in\mathcal{G}$ and any non-empty open subset $U$ of $K_{g}$,
$d_{\thirdangle}:=\dim_{\mathrm{H}}\partial_{\infty}G_{\thirdangle}=\dim_{\mathrm{H}}K_{g}\in(1,2)$
and $\mathscr{H}^{d_{\thirdangle}}(U)\in(0,+\infty)$ by \cite[Theorem 7]{Sullivan:IHESPublMath79}
and $\lip_{\overline{\mathbb{D}}}g<+\infty$, and Proposition \ref{prop:Dirichlet-form-RSC} implies the
analog of Proposition \ref{prop:Laplacian-eigenvalues} for $(\mathcal{E}^{g,U},\mathcal{F}_{g,U}^{0})$
on $L^{2}(U,\mu^{g}|_{U})$, where $\mu^{g}|_{U}\mspace{-1.15mu}:=\mspace{-1.15mu}\mu^{g}|_{\mathscr{B}(U)}$,
$\mathcal{F}_{g,U}^{0}\mspace{-1.15mu}:=\mspace{-1.15mu}\overline{\{u\in\mathcal{C}_{g}\mid\supp_{K_{g}}[u]\subset U\}}^{\mathcal{F}_{g}}$
and $\mathcal{E}^{g,U}:=\mathcal{E}^{g}|_{\mathcal{F}_{g,U}^{0}\times\mathcal{F}_{g,U}^{0}}$.
\begin{thm}[\cite{K:WeylRSC}]\label{thm:WeylRSC}
There exists $c_{\thirdangle}\in(0,+\infty)$ such that for any $g\in\mathcal{G}$ and any
non-empty open subset $U$ of $K_{g}$ with $\mathscr{H}^{d_{\thirdangle}}(\partial_{K_{g}}U)=0$
and $\overline{U}\subset g(\mathbb{D})$, the eigenvalues $\{\lambda^{g,U}_{n}\}_{n\in\mathbb{N}}$
(repeated according to multiplicity) of the Laplacian on $L^{2}(U,\mu^{g}|_{U})$
associated with $(\mathcal{E}^{g,U},\mathcal{F}_{g,U}^{0})$ satisfy
\begin{equation}\label{eq:WeylRSC}
\lim_{\lambda\to+\infty}\frac{\#\{n\in\mathbb{N}\mid\lambda^{g,U}_{n}\leq\lambda\}}{\lambda^{d_{\thirdangle}/2}}
	=c_{\thirdangle}\mathscr{H}^{d_{\thirdangle}}(U).
\end{equation}
\end{thm}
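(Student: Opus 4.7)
The plan is to adapt the strategy of \S\ref{sec:Weyl-AG}: derive a Dirichlet--Neumann bracketing of the counting function $\mathscr{N}_{g,U}(\lambda):=\#\{n\in\mathbb{N}\mid\lambda^{g,U}_{n}\leq\lambda\}$ along a conformally self-similar subdivision of $K_{g}$, and then invoke a Kesten-type renewal theorem on the space of conformal shapes of the carpets $\{K_{h}\}_{h\in\mathcal{G}}$, in analogy with Theorem \ref{thm:general-counting-AG}. For the subdivision, I would choose a finite set $I\subset G_{\thirdangle}$ so that $K_{g}=\bigcup_{\tau\in I}K_{g\tau}$ with adjacent subcells meeting only along circles in $\mathscr{D}_{g}$, and iterate to obtain cells indexed by words $\tau$ in $I$. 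By construction each cell is of the form $K_{h}$ for some $h\in\mathcal{G}$, and the measure \eqref{eq:measure-RSC} and form \eqref{eq:form-RSC} transform covariantly under this refinement, producing the required scaling $\lambda_{n}(sg)=s\lambda_{n}(g)$ on the shape space.

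Using the min-max principle, the bracketing would take the form
\[
\sum_{\tau\in I}\mathscr{N}^{\mathrm{D}}_{g\tau}(\lambda)\;\leq\;\mathscr{N}_{g,U}(\lambda)\;\leq\;\sum_{\tau\in I}\mathscr{N}^{\mathrm{N}}_{g\tau}(\lambda)+R_{g}(\lambda),
\]
where $\mathscr{N}^{\mathrm{D}/\mathrm{N}}_{g\tau}$ are the Dirichlet/Neumann counting functions on the subcells and $R_{g}(\lambda)$ quantifies the gap between Neumann and Dirichlet conditions along shared circles. To conclude via renewal theory, I must show $R_{g}(\lambda)=O(\lambda^{\eta/2})$ for some $\eta<d_{\thirdangle}$. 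The trace of $\mathcal{F}_{g}$ onto a shared circle $C$ lies in the weighted space $W^{1,2}(C)$ from Definition \ref{dfn:circular-arc}, whose one-dimensional eigenvalue count grows like $\rad(C)\lambda^{1/2}$. Summing over circles of radius above the resolution $\lambda^{-1/2}$, and using Ahlfors $d_{\thirdangle}$-regularity of $\partial_{\infty}G_{\thirdangle}$ (Sullivan \cite{Sullivan:IHESPublMath79}) to bound $\sum_{D\in\mathscr{D}_{g}}\rad(D)^{\eta}<+\infty$ for some $\eta\in(1,d_{\thirdangle})$, should yield the estimate.

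With the bracketing in hand, the renewal argument proceeds as in the proof of Theorem \ref{thm:general-counting-AG}: pass to the space of conformal equivalence classes of $(K_{h},\mathscr{D}_{h})$, equip it with the Markov chain assigning mass $\mathscr{H}^{d_{\thirdangle}}(K_{h\tau})$ to each subcell shape, and apply Kesten's theorem \cite{Kesten:AOP1974} to obtain \eqref{eq:WeylRSC} for $U=K_{g}$. The extension to general open $U\subset K_{g}$ with $\mathscr{H}^{d_{\thirdangle}}(\partial_{K_{g}}U)=0$ and $\overline{U}\subset g(\mathbb{D})$ follows by inner and outer approximation of $U$ by unions of cells, using Dirichlet--Neumann monotonicity and the vanishing of $\mathscr{H}^{d_{\thirdangle}}$ on the approximation error. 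The main obstacle is the control of $R_{g}(\lambda)$: because $K_{g}$ is \emph{infinitely ramified}, the shared boundary in the subdivision is a countable union of full circles rather than the finite point set of the Apollonian case of \S\ref{sec:Weyl-AG}, so naive summation of one-dimensional trace counts diverges. Overcoming this requires both the Ahlfors regularity input and a judicious choice of the generating set $I$ ensuring that the shared circle radii admit a summable envelope; this is the genuinely new analytic difficulty absent from the finitely ramified setting of \S\ref{sec:AG-DF}--\S\ref{sec:Weyl-AG}.
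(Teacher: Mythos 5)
Your overall architecture (self-conformal subdivision, Dirichlet--Neumann bracketing, Kesten's renewal theorem on a space of shapes, then inner/outer approximation for general $U$) matches the route indicated in the paper, and the reduction of everything to a remainder estimate of the form \eqref{eq:general-counting-AG-remainder} is exactly right. The gap is in the one step you yourself flag as the crux: your proposed bound on $R_{g}(\lambda)$ does not work. The summability input $\sum_{D\in\mathscr{D}_{g}}\rad(D)^{\eta}<+\infty$ for some $\eta\in(1,d_{\thirdangle})$ is false: for these geometrically finite Kleinian groups the exponent of convergence of the radii of the complementary disks equals the critical exponent $\delta(G_{\thirdangle})=d_{\thirdangle}$ (this is essentially the Poincar\'{e} series, and is the content of the Oh--Shah type counting asymptotics $\#\{D\in\mathscr{D}_{g}\mid\rad(D)\geq r\}\asymp r^{-d_{\thirdangle}}$ that \eqref{eq:WeylRSC} is modelled on), so $\sum_{D}\rad(D)^{\eta}=+\infty$ for every $\eta<d_{\thirdangle}$. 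Ahlfors regularity of $\partial_{\infty}G_{\thirdangle}$ gives the counting bound just quoted, which points in the opposite direction from what you need. Consequently, summing the one-dimensional trace counts $\rad(C)\lambda^{1/2}$ over the circles with $\rad(C)\geq\lambda^{-1/2}$ yields $\lambda^{1/2}\sum_{\rad(C)\geq\lambda^{-1/2}}\rad(C)\asymp\lambda^{d_{\thirdangle}/2}$, i.e., a ``remainder'' of the same order as the main term, and no choice of the generating set $I$ can change this, since the exponent of convergence is a conjugation-invariant of the packing. This is precisely why the paper does \emph{not} control the Dirichlet--Neumann defect by circle-by-circle trace counting: it states that the analog of \eqref{eq:eigenvalue-counting-AG-remainder} is obtained by heavy use of heat kernel estimates combined with the uniform relative separation \eqref{eq:uniform-relative-separation-RSC} of the circles, a qualitatively different (and genuinely two-dimensional) mechanism.

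A secondary, fixable issue: the decomposition $K_{g}=\bigcup_{\tau\in I}K_{g\tau}$ with cells of the form $K_{g\tau}=g\tau(\mathbb{D})\cap g(\partial_{\infty}G_{\thirdangle})$ cannot tile $K_{g}$ with adjacent cells meeting along circles, because two such cells are carried by closed disks with disjoint interiors and hence meet in at most one point; a covering family of this form necessarily leaves an ``outer'' residual piece, so the cellular structure feeding the renewal theorem has to be set up differently (e.g., via the tessellation of Proposition \ref{prop:Gammaq-Gq}-(1)) than in the finitely ramified Apollonian case. Your final paragraph correctly identifies where the new difficulty lies, but the proposed resolution of it is the part that fails.
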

The ergodic-theoretic aspects of the proof of Theorem \ref{thm:WeylRSC} are largely analogous
to those of the proof of Theorem \ref{thm:Weyl-AG}, and in particular the roles played by the
self-conformality of $K_{g}$ are similar to those described in Remark \ref{rmk:AG-Hausdorff-meas}.
The most difficult part of the proof of Theorem \ref{thm:WeylRSC} is that of an analog of
\eqref{eq:eigenvalue-counting-AG-remainder}, which is achieved by heavy use of heat kernel estimates
in combination with the property of $\{\tau(\partial\mathbb{D})\mid\tau\in G_{\thirdangle}\}$
that they are \emph{uniformly relatively separated} in the following sense (see \cite{Bonk:InventMath2011}):
\begin{equation}\label{eq:uniform-relative-separation-RSC}
\inf_{(x,y)\in C_{1}\times C_{2}}|x-y|\geq\varepsilon_{q}\min\{\rad(C_{1}),\rad(C_{2})\}
\end{equation}
for any $C_{1},C_{2}\in\{\tau(\partial\mathbb{D})\mid\tau\in G_{\thirdangle}\}$ with
$C_{1}\not=C_{2}$ for some $\varepsilon_{q}\in(0,+\infty)$. The full details of the
proof of Theorem \ref{thm:WeylRSC} will appear in \cite{K:WeylRSC}.

%
%

%
\end{document}